\DeclareMathAlphabet{\mathpzc}{OT1}{pzc}{m}{it}
\newtheorem{theorem}{Theorem}[section]
\newtheorem{maintheorem}{Main Theorem}[section]
\newtheorem{conjecture}[theorem]{Conjecture}
\newtheorem{lemma}[theorem]{Lemma}
\newtheorem{proposition}[theorem]{Proposition}
\newtheorem{corollary}[theorem]{Corollary}
\newtheorem{claim}[theorem]{Claim}
\theoremstyle{definition}
\newtheorem{definition}[theorem]{Definition}
\theoremstyle{remark}
\newtheorem{question}[theorem]{Question}
\def\forces{\Vdash}
\def\ZFC{\mathsf{ZFC}}
\def\CH {\mathsf{CH}}
\def\P{\mathbb P}
\begin{document}

\title{Reflection Properties of Ordinals in Generic Extensions}
\author[Aguilera]{Juan P. Aguilera}
\address[J.~P.~Aguilera]{Kurt G\"odel Research Center, Institute of Mathematics, University of Vienna. Kolingasse 14, 1090 Vienna, Austria. Institute of Discrete Mathematics and Geometry, Vienna University of Technology.
Wiedner Hauptstrasse 8-10, 1040 Vienna, Austria.}
\email{aguilera@logic.at}
\author[Switzer]{Corey Bacal Switzer}
\address[C.~B.~Switzer]{Kurt G\"odel Research Center, Institute of Mathematics, University of Vienna. Kolingasse 14, 1090 Vienna, Austria.}
\email{corey.bacal.switzer@univie.ac.at}
\subjclass[2020]{03E40, 03E47}
\keywords{reflecting ordinal, large countable ordinal, generic extension, forcing}
\maketitle

\begin{abstract}
 We study the question of when a given countable ordinal $\alpha$ is $\Sigma^1_n$- or $\Pi^1_n$-reflecting in models which are neither $\mathsf{PD}$ models nor the constructible universe, focusing on generic extensions of $L$. We prove, amongst other things, that adding any number of Cohen or random reals, or forcing with Sacks forcing or any lightface Borel weakly homogeneous ccc forcing notion cannot change such reflection properties. Moreover we show that collapse forcing increases the value of the least reflecting ordinals but, curiously, to ordinals which are still smaller than the $\omega_1$ of $L$.
\end{abstract}

\section{Introduction}\label{SectIntro}
Let $n < \omega$. An ordinal $\alpha$ is called $\Sigma^1_n$-{\em reflecting} (respectively $\Pi^1_n$-{\em reflecting}) if for each tuple $\beta_1 < \beta_2 < \dots < \beta_l < \alpha$ and each $\Sigma^1_n$ (respectively $\Pi^1_n$) formula $\varphi(x)$ if $L_\alpha \models \varphi (\vec\beta)$ then there is an $\alpha' \in (\beta_l, \alpha)$ so that $L_{\alpha'} \models \varphi (\vec\beta)$. Here ``$L_\alpha \models \varphi (\vec\beta)$" and similar such sentences refer to full second order logic over $L_\alpha$ i.e.~quantifiers range over the full powerset of the structure, and in particular whether $L_\alpha \models \varphi (\vec\beta)$ is not absolute across different models of set theory. 
The least $\Sigma^1_n$-reflecting ordinal is denoted $\sigma^1_n$ and the least $\Pi^1_n$-reflecting ordinal is denoted $\pi^1_n$. These and all the ordinals of interest here will be such that the corresponding levels of $L$ are closed under pairing, and so for our purposes -- and in order to simplify notation -- we may always act as though $l = 1$ in the definition above.

Reflecting ordinals of this and other kinds have been considered since the 70s, most notably in the work of Aczel and Richter \cite{AcRi74}, where an overview of the history until then can be found. Building on a characterization of theirs, Aanderaa \cite{Aa74} proved that $\pi^1_1 < \sigma^1_1$. Richter \cite{Ri75} then showed that $\sigma^1_2 < \pi^1_2$.
It is not too hard to show, using Shoenfield's absoluteness theorem, that the ordinals $\pi^1_1$, $\sigma^1_1$, $\pi^1_2$ and $\sigma^1_2$ are correctly computed by $L$.

Once one goes beyond $\Sigma^1_2$ and $\Pi^1_2$, the situation becomes less clear and indeed depends on underlying set-theoretic assumptions. In particular, it is independent of $\mathsf{ZFC}$ whether $\sigma^1_{n+3}$ is smaller than or greater than $\pi^1_{n+3}$: on the one hand, Cutland \cite{Cu80} proved that $\sigma^1_{n+3} < \pi^1_{n+3}$ holds for all $n\in\mathbb{N}$ under the assumption that $V = L$; on the other hand, Kechris \cite{Ke75} established, via descriptive-set-theoretic methods, that Projective Determinacy implies that $\sigma^1_{2n} < \pi^1_{2n}$ and $\pi^1_{2n+1} < \sigma^1_{2n+1}$ hold for all $n\in\mathbb{N}$. 
An alternative proof of Kechris' theorem using Inner Model Theory is presented in \cite{AgRLPO}.

The main purpose of this article is to investigate how these ordinals compare to one another in models other than those satisfying $V=L$ or $\mathsf{PD}$. 
In particular, we develop methods to compute $\sigma^1_n$ and $\pi^1_n$ in generic extensions of $L$. We show that many standard ``definable" forcing notions do not change reflection properties of countable ordinals. As a result we get that forcing with such forcing notions over $L$ preserve the property of being $\sigma^1_n$ or $\pi^1_n$ for a countable ordinal. A sample of these results is as follows:

\begin{maintheorem}\label{TheoremMain1}
    For all $n < \omega$ in any generic extension of $L$ by $\P$ we have that $\sigma^1_n = (\sigma^1_n)^L$ and $\pi^1_n = (\pi^1_n)^L$ where $\P$ is any of the following forcing notions:
    \begin{enumerate}
        \item adding any number of Cohen reals
        \item adding any number of random reals
        \item any lightface ccc weakly homogeneous Borel forcing notion
        \item Sacks forcing
        \item Miller forcing
        \item Laver forcing
    \end{enumerate}
\end{maintheorem}
The precise definitions of these forcing notions are given in the sections where the instance of the theorem above is proved.

Theorem \ref{TheoremMain1} establishes that the values of $\sigma^1_n$ and $\pi^1_n$ are not changed by many of the familiar forcing notions commonly studied in the set theory of the real numbers. In particular, it shows that the statement $\sigma^1_3 < \pi^1_3$ is consistent with the negation of the Continuum Hypothesis.
One might thus wonder whether \textit{any} forcing notion over $L$ is able to modify the value of the ordinals $\sigma^1_n$ and $\pi^1_n$. Our other main result gives a very counterintuitive answer to this question: when forcing with L\'evy's partial order to collapse cardinals, the values of $\sigma^1_n$ and $\pi^1_n$ do increase, for $n \geq 3$. \textit{However}, they remain smaller than the size of $\omega_1^L$.

\begin{maintheorem}
Assume $V = L$. Let $\P$ be the standard forcing to collapse $\omega_1$ to be countable. If $G \subseteq \P$ is generic over $L$ then in $L[G]$ for each $2 < n,m < \omega$ we have that 
    \[(\sigma^1_n)^L < (\pi^1_n)^L < \sigma^1_m < \pi^1_m < (\omega_1)^L < \omega_1.\]
\end{maintheorem}

\section{Some Basic Facts and Preliminaries}
In this section we record some basic facts about reflecting ordinals that are used throughout. For the most part these results are folklore though we try to give references when possible. First we note that the least $\Sigma^1_n$ (respectively $\Pi^1_n$)-reflecting ordinal (for any $n < \omega$) is countable. 

\begin{proposition}
    For all $n < \omega$ the ordinals $\sigma^1_n$ and $\pi^1_n$ are countable. Indeed the set of $\Sigma^1_n$-reflecting ordinals and the set of $\Pi^1_n$-reflecting ordinals both contain a club in $\omega_1$.
\end{proposition}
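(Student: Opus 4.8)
The plan is to produce, for each fixed $n$, a single club $C \subseteq \omega_1$ all of whose members are $\Sigma^1_n$-reflecting, and separately a club witnessing the $\Pi^1_n$ case; countability of $\sigma^1_n$ and $\pi^1_n$ is then immediate, since any member of such a club is a reflecting ordinal below $\omega_1$. As the paper permits, I will assume $l = 1$ throughout, so that reflection concerns a single ordinal parameter $\beta < \alpha$.

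The central device is a closure-point argument for an explicitly defined function. Working in $V \models \ZFC$, I first note that for each countable $\xi$ the structure $L_\xi$ and its full powerset are sets, so the relation $S(\xi,\beta,\ulcorner\varphi\urcorner) \equiv$ ``$L_\xi \models \varphi(\beta)$'' (with second-order quantifiers ranging over $P(L_\xi)$) is a genuine definable relation of $V$. I then define, for each $\beta < \omega_1$ and each $\Sigma^1_n$ formula $\varphi$,
\[
f(\beta,\varphi) = \min\{\xi < \omega_1 : \beta < \xi \text{ and } L_\xi \models \varphi(\beta)\}
\]
when this set is nonempty, and $f(\beta,\varphi) = 0$ otherwise. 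Because there are only countably many formulas, the pointwise supremum $g(\beta) = \sup_\varphi\big(f(\beta,\varphi)+1\big)$ is a total function $g : \omega_1 \to \omega_1$, and hence its set of closure points $C = \{\alpha < \omega_1 : \forall \beta < \alpha\ g(\beta) < \alpha\}$ is club.

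Next I would verify that every $\alpha \in C$ is $\Sigma^1_n$-reflecting. Fix $\beta < \alpha$ and a $\Sigma^1_n$ formula $\varphi$ with $L_\alpha \models \varphi(\beta)$. Then $\alpha$ itself witnesses that the defining set of $f(\beta,\varphi)$ is nonempty, so $f(\beta,\varphi) \leq \alpha$, and by construction it is the \emph{least} such witness. Closure of $\alpha$ under $g$ forces $f(\beta,\varphi) < \alpha$, so $\alpha' := f(\beta,\varphi)$ satisfies $\beta < \alpha' < \alpha$ and $L_{\alpha'} \models \varphi(\beta)$, which is precisely what reflection demands. The identical argument with $\Pi^1_n$ formulas yields a club of $\Pi^1_n$-reflecting ordinals, establishing both halves of the proposition and, in particular, $\sigma^1_n, \pi^1_n < \omega_1$.

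The one point that genuinely requires care — and the reason I would \emph{not} attempt the superficially natural Löwenheim--Skolem / transitive-collapse argument — is the interaction between second-order satisfaction and countability. In the collapse of a countable $X \prec H_\theta$ one discards the reals needed to evaluate $\Sigma^1_n$ (projective) truth over $L_\alpha$, and this truth is not absolute to the countable collapse, so the collapse would compute the \emph{wrong} theory of $L_\alpha$. The closure-point approach sidesteps this entirely: it never passes to a submodel, it always evaluates $S$ in $V$, and the only nontrivial observation is that the relevant least witness is \emph{countable} — guaranteed precisely because the hypothesis $L_\alpha \models \varphi(\beta)$ with $\alpha < \omega_1$ already supplies the countable witness $\alpha$. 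Once this is isolated, the remainder is routine bookkeeping.
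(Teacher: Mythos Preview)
Your argument is correct, but it takes a different route from the paper. The paper argues by contradiction via Fodor's lemma: if the non-reflecting ordinals were stationary, the regressive map $\alpha\mapsto\beta(\alpha)$ sending each non-reflecting $\alpha$ to the least bad parameter would be constant on a stationary set, and then thinning again by the (countably many) formulas would yield a stationary set of ordinals all witnessing the \emph{same} failure $(\beta,\varphi)$, which is immediately self-contradictory as any two such ordinals reflect into each other.

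Your approach is direct rather than by contradiction: you define the ``least witness'' function $f(\beta,\varphi)$, amalgamate over the countably many formulas to get $g:\omega_1\to\omega_1$, and take the club of closure points. This buys you an explicit description of a reflecting club and avoids the stationary-set machinery entirely; it is arguably the more elementary argument. The paper's Fodor argument, on the other hand, is a one-liner once the pressing-down is set up and makes transparent \emph{why} non-reflection cannot be stationary. Your closing remark about the failure of a naive L\"owenheim--Skolem/collapse approach is a correct observation (second-order satisfaction over $L_\alpha$ is not absolute to a countable transitive collapse), though it is commentary rather than part of the proof.
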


\begin{proof}
    Fix $n < \omega$ and $\Gamma \in \{\Sigma, \Pi\}$ and suppose towards a contradiction that the set of $\Gamma^1_n$-reflecting ordinals does not contain a club in $\omega_1$. Then there is a stationary set $S \subseteq \omega_1$ of points which are not $\Gamma^1_n$-reflecting. For each $\alpha \in S$ let $\beta (\alpha) < \alpha$ be least so that there is a $\Gamma^1_n$-formula $\varphi(x)$ so that $L_\alpha \models \varphi (\beta(\alpha))$ but no smaller $L_\gamma$ is a model of $\varphi(\beta(\alpha))$. By Fodor's pressing down lemma there is a stationary $S' \subseteq  S$ so that $\beta = \beta(\alpha)$ for some fixed $\beta$ and every $\alpha \in S'$. Moreover since there are only countably many $\Gamma^1_n$-formulas there is again an $S'' \subseteq S'$ stationary and a fixed $\Gamma^1_n$-formula $\varphi(x)$ so that for each $\alpha \in S''$ we have that $L_\alpha \models \varphi(\beta)$ but no small $L_\gamma$ is a model of $\varphi (\beta)$. Of course now we have reached a contradiction by the defining property of $S''$.
\end{proof}

If $M$ is some transitive (possibly class sized) model of set theory then denote by $(\sigma^1_n)^M$ (respectively $(\pi^1_n)^M$) the ordinal as computed in that model. 

\begin{lemma}
    Fix $n < \omega$ at least 3 and $\Gamma \in \{\Sigma, \Pi\}$. For any $\alpha < \omega_1$ if $L \models$ ``$\alpha$ is not $\Gamma^1_n$-reflecting" then $\alpha$ is not $\Gamma^1_n$-reflecting. In particular for each $n < \omega$ we have that $\sigma^1_n \geq (\sigma^1_n)^L$ and $\pi^1_n \geq (\pi^1_n)^L$.\label{dontgodown}
\end{lemma}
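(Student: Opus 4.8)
The plan is to transfer a witness of non-reflection directly from $L$ to $V$, after recasting second-order satisfaction over levels of $L$ as ordinary statements of the analytic hierarchy. Since $(\sigma^1_n)^L$ and $(\pi^1_n)^L$ lie below $\omega_1^L$, for the displayed inequalities it suffices to treat $\alpha<\omega_1^L$, so I may assume $\alpha$ is countable in $L$ (the range $\omega_1^L\le\alpha<\omega_1$, where $L_\alpha$ is uncountable in $L$, is handled separately by adapting the coding). Working in $L$, fix a real $c$ coding $(L_\alpha,\in)$ and note that a code $c_{\alpha'}$ for each $(L_{\alpha'},\in)$ with $\alpha'<\alpha$ is uniformly computable from $c$. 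Under this coding a subset of $L_{\alpha'}$ is coded by a real, so for a $\Sigma^1_n$ or $\Pi^1_n$ second-order formula $\varphi$ the assertion ``$L_{\alpha'}\models\varphi(\beta)$'' becomes a genuine $\Sigma^1_n$ resp.\ $\Pi^1_n$ predicate of $c_{\alpha'}$, whose real quantifiers range over the reals of whichever universe performs the computation. The entire question thus reduces to the $L$-versus-$V$ behaviour of these analytic predicates at codes lying in $L$.

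Fix, in $L$, a witness to non-$\Gamma^1_n$-reflection: a $\Gamma^1_n$ formula $\varphi$ and a parameter $\beta<\alpha$ with $(L_\alpha\models\varphi(\beta))^L$ but $(L_{\alpha'}\models\varphi(\beta))^L$ failing for all $\alpha'\in(\beta,\alpha)$. I would attempt to reuse $(\varphi,\beta)$ in $V$ with the only absoluteness that is free, namely Shoenfield's theorem, which makes $\Sigma^1_2$ and $\Pi^1_2$ statements with parameters in $L$ fully absolute between $L$ and $V$; pushing this through a single real-quantifier block yields $\Sigma^1_3$-upward and $\Pi^1_3$-downward absoluteness between $L$ and $V$. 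This already settles one of the two clauses of non-reflection at the base level: the clause that reduces to transferring a true $\Sigma^1_n$ statement upward from $L$ to $V$ (the positive clause $L_\alpha\models\varphi(\beta)$ when $\Gamma=\Sigma$, and each negative instance $L_{\alpha'}\not\models\varphi(\beta)$ when $\Gamma=\Pi$), since there the single leading existential witness may be chosen in $L$ and the remaining matrix is Shoenfield-absolute.

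The main obstacle is the other clause, which in both cases amounts to the upward persistence of a \emph{true $\Pi^1_n$ statement} from $L$ to $V$: a universal-over-reals assertion holding of all reals of $L$ must be shown to survive the new reals of $V$. This is exactly what free absoluteness does not provide for $n\ge 3$ (and for $n\ge 4$ even the first clause escapes it), and the danger is concrete: a subset of some $L_{\alpha'}$ lying in $V\setminus L$ could witness $L_{\alpha'}\models\varphi(\beta)$ although no subset in $L$ does, repairing a reflection that failed in $L$. My plan to defeat this is to pass to a \emph{canonical-witness} form of the witnessing formula, using the $\Sigma^1_2$ good wellordering of the reals of $L$ to reflect instead the existence of the $<_L$-least object with the relevant property, so that any witness produced in $V$ is forced to be the canonical $L$-real and hence already belongs to $L$, contradicting non-reflection in $L$. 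The step I expect to be hardest is propagating this pinning through \emph{every} quantifier alternation of $\varphi$ rather than only the outermost one; making it uniform is where one should appeal to an Aczel--Richter-style characterization of $\Gamma^1_n$-reflection by a first-order property of the $L$-hierarchy, which, being about the absolute structure $L$, is computed identically in $L$ and $V$, yields precisely the one-directional transfer asserted, and simultaneously explains why the reverse inequality is free to fail.
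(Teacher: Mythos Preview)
Your diagnosis of the obstacle is accurate: one clause of non-reflection transfers for free by Shoenfield, while the other requires pushing a $\Pi^1_n$ truth upward from $L$ to $V$, which is not available absolutely. But your proposed repair---Skolemizing via the $<_L$-least witness and then appealing to an unspecified Aczel--Richter-style first-order characterization---is where the argument stalls. You yourself flag that propagating the pinning through every alternation is the hard step, and the proposal does not carry it out; the final sentence is an appeal to a black box rather than a mechanism.

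The paper's device is much more direct and avoids any induction on alternations: replace the witnessing formula $\varphi$ by the formula $\varphi^*$ obtained by relativizing \emph{every quantifier except the innermost one} to $L$. Since ``$X\in L$'' is $\Sigma^1_2$, inserting the clauses $X_i\in L$ or $X_i\notin L$ under each outer quantifier keeps $\varphi^*$ in $\Gamma^1_n$ once $n\ge 3$. In $L$ the formulas $\varphi$ and $\varphi^*$ coincide, so $\varphi^*$ still witnesses non-reflection there. Now both clauses transfer to $V$ in one stroke: after instantiating the outer $L$-relativized quantifiers with $L$-reals, the remaining innermost block is $\Pi^1_1$ or $\Sigma^1_1$ with parameters in $L$, and Shoenfield (indeed Mostowski) absoluteness moves it freely between $L$ and $V$. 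No canonical-witness bookkeeping or external characterization is needed; the whole content is that the $\Sigma^1_2$ cost of saying ``$\in L$'' fits under the third projective level.
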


\begin{proof}
    Fix $n < \omega$ at least 3 and $\Gamma \in \{\Sigma, \Pi\}$ and suppose $\alpha$ is an ordinal which is not $\Gamma^1_n$-reflecting in $L$. We will show that $\alpha$ is not $\Gamma^1_n$-reflecting in $V$. Oddly, there are two cases, not depending on whether $\Gamma$ is $\Pi$ or $\Sigma$ but rather on what the innermost quantifier is i.e. whether $\Gamma =\Pi$ and $n$ is odd or $\Gamma = \Sigma$ and $n$ is even - so the innermost quantifier is $\forall$ (case 1) or $\Gamma = \Pi$ and $n$ is even or $\Gamma = \Sigma$ and $n$ is odd so the innermost quantifier is $\exists$ (case 2). We take these one at a time. First though, since $\alpha$ is not $\Gamma^1_n$-reflecting in $L$ there is a $\Gamma^1_n$-formula $\varphi(x)$ and an ordinal $\beta < \alpha$ so that $L \models$``$L_\alpha \models \varphi (\beta)$" but for each $\gamma \in (\beta, \alpha)$ we have $L\models$ ``$L_\gamma$ does not model $\varphi(\beta)$". Fix such a $\varphi$ and $\beta$.\smallskip

    \noindent \underline{Case 1}: The innermost quantifier is $\forall$. For readability let us assume that $\Gamma = \Sigma$. Since we only care about the innermost quantifier nothing is changed in the argument by adding a vapid existential quantifier on the outside hence there is no loss of generality in assuming this. Let us be clear about the form of $\varphi(x)$ by writing it as follows: $$\varphi (x) = \exists X_0\hdots\exists X_{n-2}\forall X_{n-1} \psi (X_0, \hdots, X_{n-1}, x)$$
where $\psi$ is quantifier free. Let $\varphi^*(x)$ then be the following partial relativization of $\varphi(x)$ to $L$: $$\exists X_0\Big[X_0 \in L \land \forall X_1 \big[X_1 \notin L \lor \hdots\exists X_{n-2}[X_{n-2} \in L \land \forall X_{n-1} \psi (X_0, \hdots, X_{n-1}, x)]\big]\hdots\Big]$$
To be clear, each quantifier except the final one is relativized to $L$. Note that if $V=L$ then $\varphi^*(\beta)$ is equivalent to $\varphi(\beta)$. Since the statement ``$X_i \in L$" is $\Sigma^1_2$ counting quantifiers reveals that $\varphi^*(x)$ is also $\Gamma^1_n$ (this part uses that $n \geq 3$). This is the formula that we will show is not reflected by $\alpha$.  There are two parts to this. We need to show first that $L_\alpha \models \varphi^*(\beta)$ (in $V$) and second that no smaller $L_\gamma$ is a model of $\varphi^*(\beta)$. We take these one at a time. 

First observe that since $L \models$ ``$L_\alpha \models \varphi (\beta)$" we have that $L \models$ ``$L_\alpha \models \varphi^*(\beta)$". Thus in $V$ we have that $\exists X_0 \in L$ such that for all $X_1 \in L\hdots\exists X_{n-2} \in L$ so that $L \models$ ``$L_\alpha \models \forall X_{n-1} \psi (X_0, \hdots, X_{n-1}, \beta)$" but now ``$L_\alpha \models \forall X_{n-1} \psi (X_0, \hdots, X_{n-1}, \beta)$" is a $\Pi^1_1$ statement with parameters in $L$ so it is upwards absolute by Shoenfield absoluteness. Thus in $V$ we have that $\exists X_0 \in L$ such that for all $X_1 \in L\hdots\exists X_{n-2} \in L$ so that $L_\alpha \models \forall X_{n-1} \psi (X_0, \hdots, X_{n-1}, \beta)$ which means that $L_\alpha \models \varphi^*(\beta)$. 

Now we show that no smaller $L_\gamma$ is a model of $\varphi^*(\beta)$. Indeed suppose some $L_\gamma \models \varphi^* (\beta)$. Then there is an $X_0 \in L$ so that for each $X_1 \in L \hdots$ so that there is an $X_{n-2} \in L$ so that $L_\gamma \models \forall X_{n-1} \psi (X_0, \hdots, X_{n-1}, \beta)$. But now $L_\gamma \models \forall X_{n-1} \psi (X_0, \hdots, X_{n-1}, \beta)$ is a $\Pi^1_1$ statement with parameters in $L$ so it is downwards absolute, which means that $L \models$``$L_\gamma \models \varphi^*(\beta)$" and hence $L\models$ ``$L_\gamma \models \varphi (\beta)$" contradicting the assumption on the defining property of $\varphi$ and $\beta$. \smallskip

\noindent \underline{Case 2}: The innermost quantifier is $\exists$. This case is nearly the same so we focus on the differences. Again we assume the outermost quantifier of $\varphi (x)$ is $\exists$ and it is of the form $$\varphi (x) = \exists X_0\hdots\forall X_{n-2}\exists X_{n-1} \psi (X_0, \hdots, X_{n-1}, x)$$
with $\psi$ quantifier free. Let $\varphi^*(x)$ be the same almost relativization: $$\exists X_0\Big[X_0 \in L \land \forall X_1 \big[X_1 \notin L \lor \hdots\forall X_{n-2}[X_{n-2} \notin L \lor \exists X_{n-1} \psi (X_0, \hdots, X_{n-1}, x)]\big]\hdots\Big].$$

The proof now proceeds essentially as before to show that $L_\alpha \models \varphi^*(\beta)$ but no smaller $L_\gamma$ is a model of $\varphi^*(\beta)$. The difference is now that we use the simple fact that $\Sigma^1_1$-statements are upwards absolute to ensure that $L_\alpha \models \varphi^*(\beta)$ in $V$ and Shoenfield absoluteness to show that no smaller $L_\gamma$ models $\varphi^*(\beta)$. 
    \end{proof}

\section{Cohen Forcing}

In this section we study the preservation of reflection properties of ordinals after adding arbitrarily many Cohen reals. For a set $X$ denote by $\mathbb C_X$ the forcing notion for adding $|X|$-Cohen reals indexed by $X$ i.e.~conditions in $\mathbb C_X$ are finite, partial functions $p: X \times \omega \to 2$ reverse ordered by inclusion. It's well known that for $X$ and $Y$ of the same cardinality $\mathbb C_X$ and $\mathbb C_Y$ are forcing equivalent. Recall also that any countable, non-trivial forcing notion is forcing equivalent to adding one Cohen real. The main theorem we will prove in this section is the following.

\begin{theorem}
    Let $\kappa$ be an arbitrary cardinal, $n < \omega$ and $\Gamma \in \{\Sigma, \Pi\}$. If $\alpha < \omega_1$ is $\Gamma^1_n$-reflecting then $\forces_{\mathbb C_\kappa}$``$\alpha$ is $\Gamma^1_n$-reflecting". \label{Cohen}
\end{theorem}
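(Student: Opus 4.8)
The plan is to reduce the preservation of reflection to a single ``translation lemma'' showing that forcing with $\mathbb C_\kappa$ does not change the $\Gamma^1_n$-theory of a ground-model countable level $L_\delta$ in a complexity-increasing way, and then to run the reflection argument of $V$ on the translated formula. Throughout I use that each $L_\delta$ (for $\delta \le \alpha < \omega_1$) is a fixed countable structure whose underlying set, membership relation, and distinguished ordinal $\beta$ are the same in $V$ and in $V[G]$; only the range of the second-order quantifiers --- i.e.\ the collection of subsets of $L_\delta$ --- grows. Fixing a real $c_\delta \in V$ coding $(L_\delta,\in,\beta)$, the statement ``$L_\delta \models \varphi(\beta)$'' for a $\Gamma^1_n$ formula $\varphi = Q_0X_0\cdots Q_{n-1}X_{n-1}\,\psi$ (with $\psi$ quantifier-free and the $Q_i$ alternating) unwinds into a $\Gamma^1_n$ statement about $c_\delta$ in which the second-order quantifiers become quantifiers over reals and the matrix is arithmetic in the relevant codes.

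The heart of the argument is the following claim: for every $\Gamma^1_n$ formula $\varphi$ there is a $\Gamma^1_n$ formula $\varphi^{\mathbb C}$ --- a ``Cohen relativization'' analogous to the relativization $\varphi^*$ of Lemma~\ref{dontgodown} --- such that for all countable $\delta$ and $\beta < \delta$,
$$V[G] \models \text{``}L_\delta \models \varphi(\beta)\text{''} \iff V \models \text{``}L_\delta \models \varphi^{\mathbb C}(\beta)\text{''}.$$
To build $\varphi^{\mathbb C}$ I would proceed as follows. Since the parameters of ``$L_\delta \models \varphi(\beta)$'' are ordinals and $\mathbb C_\kappa$ is weakly homogeneous, its truth in $V[G]$ is equivalent to $\forces_{\mathbb C_\kappa}$``$L_\delta \models \varphi(\beta)$''. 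I then unwind this forcing statement one quantifier at a time: by the maximal principle each second-order quantifier $\exists X_i$ (resp.\ $\forall X_i$) over subsets of $L_\delta$ in $V[G]$ is replaced by the corresponding quantifier over $\mathbb C_\kappa$-names $\dot X_i$ in $V$, and at the bottom one is left with $\forces_{\mathbb C_\kappa}\psi(\dot X_0,\dots,\dot X_{n-1},\beta)$. Because every name for a subset of the countable set $L_\delta$ is a nice name depending on only countably many coordinates, and because the coordinate-permuting automorphisms of $\mathbb C_\kappa$ preserve the forcing relation, I may assume --- by a back-and-forth choice of automorphisms fixing the finitely many previously chosen names, which together occupy only countably many coordinates --- that all the $\dot X_i$ live on a single fixed countable subproduct $\mathbb C_\omega$ of $\mathbb C_\kappa$; nice $\mathbb C_\omega$-names for subsets of $L_\delta$ are themselves coded by reals, i.e.\ by subsets of $L_\delta$. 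Finally, the Cohen forcing relation $\forces_{\mathbb C_\omega}\psi$ for the arithmetic matrix $\psi$ is arithmetic in these codes. Reading the result back as a second-order statement over $L_\delta$ yields a formula $\varphi^{\mathbb C}$ with exactly the same block of $n$ alternating quantifiers as $\varphi$, hence $\Gamma^1_n$, and with the displayed equivalence holding by construction.

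Granting the translation lemma, the theorem follows quickly and symmetrically in both directions. Fix an arbitrary generic $G$ and suppose $L_\alpha \models \varphi(\beta)$ holds in $V[G]$, with $\varphi \in \Gamma^1_n$ and $\beta < \alpha$. By the lemma $V \models$``$L_\alpha \models \varphi^{\mathbb C}(\beta)$'', and since $\varphi^{\mathbb C}$ is again $\Gamma^1_n$ and $\alpha$ is $\Gamma^1_n$-reflecting in $V$, there is some $\alpha' \in (\beta,\alpha)$ with $V \models$``$L_{\alpha'} \models \varphi^{\mathbb C}(\beta)$''. Applying the lemma in the other direction at $\alpha'$ gives $V[G] \models$``$L_{\alpha'}\models\varphi(\beta)$'', which is exactly the required reflection witness. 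Since $\varphi$, $\beta$ and $G$ were arbitrary, this shows $\forces_{\mathbb C_\kappa}$``$\alpha$ is $\Gamma^1_n$-reflecting''.

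The main obstacle I expect is the bookkeeping in the translation lemma that keeps the complexity of $\varphi^{\mathbb C}$ exactly at $\Gamma^1_n$: one must verify that replacing second-order quantifiers by name-quantifiers incurs no extra real quantifier (which is precisely what the maximal principle buys), that the Cohen forcing relation for the arithmetic matrix stays arithmetic, and --- the one genuinely $\kappa$-dependent point --- that weak homogeneity lets us confine all the quantified names to a fixed countable set of coordinates, so that they can be coded as subsets of the countable structure $L_\delta$. All remaining steps are either absoluteness of $L_\delta$ and of its coding, or routine unwinding of the forcing relation.
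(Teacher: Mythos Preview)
Your strategy --- unwind $\forces_{\mathbb C_\kappa}$``$L_\delta\models\varphi(\beta)$'' into an alternating block of quantifiers over nice names on a countable subproduct, observe that such names are coded by reals and that the Cohen forcing relation for the arithmetic matrix is arithmetic, and then reflect in $V$ --- is exactly the paper's approach, and your packaging as a single translation $\varphi\mapsto\varphi^{\mathbb C}$ is a clean way to organize it.

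There is, however, one genuine gap. You claim that by a back-and-forth choice of automorphisms fixing the previously chosen names one may confine all the $\dot X_i$ to a \emph{single fixed} countable coordinate set $\omega\subseteq\kappa$. But a universally quantified name may be played with support equal to all of $\omega$; after that, any permutation of $\kappa$ fixing this name must fix $\omega$ pointwise, so a later name with support meeting $\kappa\setminus\omega$ cannot be moved into $\omega$ at all. Thus the reduction to a fixed $\mathbb C_\omega$ fails as stated. The paper avoids this by letting the countable index set \emph{grow} along the quantifier block: Lemma~\ref{LOlemma} replaces each quantifier $Q_iX_i$ by a quantifier over pairs $(A_i,\dot X_i)$ with $A_i$ a countable linear order, $\dot X_i$ a $\mathbb C_{A_i}$-name, and $A_0\le A_1\le\cdots\le A_{n-1}$, so that fresh coordinates are always available. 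An equivalent and slightly simpler repair of your argument is to put the $i$-th name on $\mathbb C_{\omega\cdot(i+1)}$; then the automorphism at stage $i$ need only fix $\omega\cdot i$ and land the new support in $\omega\cdot(i+1)$, which is always possible, the terminal forcing $\mathbb C_{\omega\cdot n}$ is still a fixed countable arithmetic poset, and your complexity count goes through unchanged. (The paper also interposes a separate reduction from $\mathbb C_\kappa$ to $\mathbb C_{\omega_1}$ via Lemma~\ref{projectivetruthisstable}; with the growing-block fix your direct automorphism route makes that intermediate step unnecessary.)
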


As explained in paragraph preceeding the statement of Theorem \ref{Cohen} we could have written $\mathbb C_X$ for an arbitrary set $X$ in lieu of $\mathbb C_\kappa$ for a cardinal $\kappa$. The immediate corollary of this theorem is the following, which is what we were initially interested in.

\begin{corollary}
    In any generic extension of $L$ by any number of Cohen reals $\sigma^1_n = (\sigma^1_n)^L$ and $\pi^1_n = (\pi^1_n)^L$ for all $n < \omega$. In particular the inequality $\sigma^1_{n+2} < \pi^1_{n+2}$ holds.
\end{corollary}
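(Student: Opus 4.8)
The plan is to deduce the Corollary from Theorem \ref{Cohen} together with Lemma \ref{dontgodown} and the classical computations recalled in the introduction. Fix $n < \omega$ and $\Gamma \in \{\Sigma, \Pi\}$, and write $L[G]$ for a generic extension of $L$ by $\mathbb C_\kappa$, recalling that $L^{L[G]} = L$ since Cohen forcing adds no constructible sets. Since $\mathbb C_\kappa$ is ccc it preserves cardinals, so $\omega_1^L = \omega_1^{L[G]}$ and every ordinal countable in $L$ stays countable in $L[G]$; in particular the witnessing ordinals used below satisfy the hypothesis ``$\alpha < \omega_1$'' of Theorem \ref{Cohen}. I would establish each equality $(\sigma^1_n)^{L[G]} = (\sigma^1_n)^L$ (and likewise for $\pi^1_n$) by proving the two inequalities separately, and then transfer the strict inequality from $L$.

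For the inequality $\geq$, I would argue that no new reflecting ordinals are created below the old value. For $n \geq 3$ this is exactly Lemma \ref{dontgodown}, applied with the ambient universe taken to be $L[G]$ (whose constructible universe is $L$): every $\gamma < (\sigma^1_n)^L$ fails to be $\Sigma^1_n$-reflecting in $L$ by minimality, hence fails to be $\Sigma^1_n$-reflecting in $L[G]$, so $(\sigma^1_n)^{L[G]} \geq (\sigma^1_n)^L$, and the same argument with $\Gamma = \Pi$ handles $\pi^1_n$. For $n \leq 2$ the ordinals $\sigma^1_1, \pi^1_1, \sigma^1_2, \pi^1_2$ are computed correctly by $L$ via Shoenfield absoluteness, as noted in the introduction, so both inequalities hold simultaneously and nothing further is required.

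For the inequality $\leq$, I would apply Theorem \ref{Cohen} with the ground model taken to be $L$. The ordinal $\alpha := (\sigma^1_n)^L$ is $\Sigma^1_n$-reflecting and countable in $L$, so the theorem (applied inside $L$) yields $\forces_{\mathbb C_\kappa}$``$\alpha$ is $\Sigma^1_n$-reflecting''. Thus $\alpha$ is $\Sigma^1_n$-reflecting in $L[G]$, whence $(\sigma^1_n)^{L[G]} \leq \alpha = (\sigma^1_n)^L$; the identical argument applied to $(\pi^1_n)^L$ gives $(\pi^1_n)^{L[G]} \leq (\pi^1_n)^L$. Combining the two inequalities yields the desired equalities for every $n < \omega$.

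Finally, for the strict inequality, I would note that $\sigma^1_{n+2} < \pi^1_{n+2}$ already holds in $L$: for $n = 0$ it is Richter's $\sigma^1_2 < \pi^1_2$, and for $n \geq 1$ it is Cutland's theorem $\sigma^1_{(n-1)+3} < \pi^1_{(n-1)+3}$ under $V = L$. Since the previous paragraphs give $(\sigma^1_{n+2})^{L[G]} = (\sigma^1_{n+2})^L$ and $(\pi^1_{n+2})^{L[G]} = (\pi^1_{n+2})^L$, the strict inequality transfers verbatim to $L[G]$. The only point demanding care, and the sole place where genuine forcing content enters, is the inequality $\leq$: because the satisfaction relation ``$L_\alpha \models \varphi(\beta)$'' is \emph{not} absolute between $L$ and $L[G]$, one cannot infer that a reflecting ordinal of $L$ remains reflecting merely from $L \subseteq L[G]$, and it is precisely here that Theorem \ref{Cohen} is indispensable.
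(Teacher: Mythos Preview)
Your argument is correct and follows exactly the paper's approach: apply Lemma \ref{dontgodown} for the inequality $\geq$ and Theorem \ref{Cohen} for the inequality $\leq$, then read off the strict inequality from Cutland's and Richter's results in $L$. If anything you are more careful than the paper, which invokes Lemma \ref{dontgodown} for all $n$ even though that lemma is stated only for $n\geq 3$; your separate handling of $n\leq 2$ via Shoenfield absoluteness is the right patch.
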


Let us first prove the Corollary assuming Theorem \ref{Cohen}.

\begin{proof}
    Fix $n < \omega$, $\kappa$ a cardinal and work in $L[G]$ for some generic $G \subseteq \mathbb C_\kappa$. By Lemma \ref{dontgodown} we have that $\sigma^1_n \geq (\sigma^1_n)^L$ and same for $\pi^1_n$. By Theorem \ref{Cohen} we get that $(\sigma^1_n)^L$ is still $\Sigma^1_n$-reflecting (and same for $\pi^1_n$/$\Pi^1_n$) which completes the proof. 
\end{proof}

Let us now turn to the proof of Theorem \ref{Cohen}. As a warm-up which introduces most of the important ideas in the proof let us first prove that Theorem \ref{Cohen} holds in the special case that $\kappa = 1$. For brevity let us refer to $\mathbb C_1$ as $\mathbb C$, which is the standard forcing notion to add one Cohen real. We take this for definiteness as finite partial functions from $\omega$ to $2$, which we treat as natural numbers. Note that as such a subset of $\mathbb C$ is itself a real. To say then that $A \subseteq \mathbb C$ is a maximal antichain is arithmetical in $A$ and similarly for other standard forcing properties such as dense, predense etc.

\begin{lemma}
    Let $n < \omega$ and $\Gamma \in \{\Sigma, \Pi\}$. If $\alpha < \omega_1$ is $\Gamma^1_n$-reflecting then we have $\forces_{\mathbb C}$``$\alpha$ is $\Gamma^1_n$-reflecting". 
\end{lemma}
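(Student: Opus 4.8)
The plan is to transfer the entire question from $V[G]$ back to the ground model by means of the forcing relation, whose definability over the levels $L_\gamma$ is the real content. Fix a generic $G \subseteq \mathbb C$ and work in $V[G]$; suppose $\varphi$ is $\Gamma^1_n$, that $\beta < \alpha$, and that $L_\alpha \models \varphi(\beta)$, where as always $\models$ refers to full second-order satisfaction computed in $V[G]$. I must produce $\alpha' \in (\beta, \alpha)$ with $L_{\alpha'} \models \varphi(\beta)$. First I would fix a condition $p \in G$ forcing $(L_\alpha \models \varphi(\beta))$. The idea is then to show that ``$p$ forces $L_\gamma \models \varphi(\beta)$'' is itself expressible, uniformly in $\gamma$, by a single formula of the \emph{same} $\Gamma^1_n$ complexity evaluated over $L_\gamma$ in $V$; to apply the reflection of $\alpha$ to that formula in $V$ (where $\alpha$ is $\Gamma^1_n$-reflecting by hypothesis); and finally to read the conclusion back into $V[G]$ using $p \in G$.

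The hard part will be the following definability lemma: for every $\Gamma^1_n$ formula $\varphi$ there is a $\Gamma^1_n$ formula $\varphi^{\Vdash}$ of the same complexity such that, for all ordinals $\gamma > \omega$ and all conditions $p$,
\[
p \Vdash (L_\gamma \models \varphi(\beta)) \quad\Longleftrightarrow\quad L_\gamma \models \varphi^{\Vdash}(p,\beta),
\]
the right-hand side being evaluated in $V$. Two features of $\mathbb C$ make this work while keeping the analytical level fixed. First, conditions of $\mathbb C$ are coded by natural numbers, so every quantifier over conditions --- in particular the density quantifiers ``$\forall q \le p\ \exists q' \le q$'' occurring in the forcing clauses --- is a first-order quantifier over $L_\gamma$, and such quantifiers are absorbed without raising the analytical complexity (here one uses that $L_\gamma$ is closed under pairing, exactly as the relevant levels are). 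Second, since $\mathbb C$ and $L_\gamma$ are both countable, every nice $\mathbb C$-name for a subset of $L_\gamma$ is coded by a single element of $\mathcal P(L_\gamma)$, and conversely such names realize, as $G$ varies, \emph{precisely} the sets in $\mathcal P(L_\gamma)^{V[G]}$; hence a second-order quantifier $\exists X$ (resp.\ $\forall X$) in $\varphi$ translates into a second-order quantifier over names of the same type in $\varphi^{\Vdash}$. I would then run the usual induction on the quantifier prefix: the base case, forcing of the quantifier-free matrix, is arithmetical over $L_\gamma$ in $p$ and the name parameters; the clause for $p \Vdash \exists X\,\psi$ unfolds as ``the set of $q \le p$ admitting a name $\tau$ with $q \Vdash \psi(\tau)$ is dense below $p$'', which stays $\Sigma$; and the clause for $p \Vdash \forall X\,\psi$ unfolds as ``for every name $\tau$, $p \Vdash \psi(\tau)$'', which stays $\Pi$. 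This yields $\varphi^{\Vdash}$ at the advertised level, uniformly in $\gamma$, and it is precisely at this step that the concrete shape of Cohen forcing is used.

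Granting the lemma the proof closes quickly. Working in $V$, the choice of $p$ gives $L_\alpha \models \varphi^{\Vdash}(p,\beta)$. Since $\varphi^{\Vdash}$ is $\Gamma^1_n$, its parameters $p < \omega \le \alpha$ and $\beta < \alpha$ lie below $\alpha$, and $\alpha$ is $\Gamma^1_n$-reflecting in $V$, there is $\alpha' \in (\max(\beta,p),\alpha) \subseteq (\beta,\alpha)$ with $L_{\alpha'} \models \varphi^{\Vdash}(p,\beta)$, that is, $p \Vdash (L_{\alpha'} \models \varphi(\beta))$. As $p \in G$, we conclude $L_{\alpha'} \models \varphi(\beta)$ in $V[G]$, which is exactly the reflection required. (One may ensure $\alpha' > \omega$, so that $p$ and the coding of $\mathbb C$ genuinely make sense over $L_{\alpha'}$, by carrying $\omega$ as a harmless extra parameter.) I note finally that, because $\mathbb C$ is weakly homogeneous and the parameters $\alpha,\beta$ are ground-model ordinals, one could instead take $p = \emptyset$ the trivial condition throughout, trivializing the parameter bookkeeping; this is the form of the argument that will generalize to the other forcing notions treated below.
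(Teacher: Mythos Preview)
Your proposal is correct and follows essentially the same strategy as the paper: translate ``$p\Vdash L_\gamma\models\varphi(\beta)$'' into a $\Gamma^1_n$ statement over $L_\gamma$ in $V$ by coding nice $\mathbb C$-names for subsets of $L_\gamma$ as subsets of $L_\gamma$ (using that conditions are natural numbers), observe that forcing the second-order-quantifier-free matrix is first-order and that each second-order quantifier becomes a quantifier of the same type over names, and then apply the ground-model reflection of $\alpha$. The only cosmetic differences are that the paper phrases the argument by contradiction and quantifies over all conditions from the outset (implicitly using homogeneity), whereas you argue directly with a fixed $p\in G$ and note the homogeneity simplification at the end.
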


\begin{proof}
    Fix $n < \omega$ and $\Gamma \in \{\Sigma, \Pi\}$. Let $\alpha < \omega_1$ be $\Gamma^1_n$-reflecting. Suppose towards a contradiction that it is forced that, for some fixed $\beta < \alpha$ and $\Gamma^1_n$-formula $\varphi$ that $L_\alpha \models \varphi(\beta)$ but no smaller $L_\gamma$ models $\varphi(\beta)$. The argument actually depends on which $n$ and $\Gamma$ almost not at all so let us consider the case $\Gamma = \Sigma^1_3$ and leave the cosmetic modifications for the other cases to the reader. Let us be clear about the form of $\varphi(x)$ and write it as $\exists X \forall Y \exists Z \psi (X, Y, Z, x)$ with $\psi$ second order quantifier free. 

    Before continuing, let us make a remark on $\mathbb C$-names for reals, or indeed subsets of $L_\alpha$ from the point of view of the second order logic over $L_\alpha$. By the ccc, plus the fact that, since $\mathbb C$ is itself an arithmetical set of natural numbers and hence $\mathbb C \in L_\alpha$ and correctly computed for any $\alpha$, each $\dot{X}$ for a subset of $L_\alpha$ is forced to be equal to a name of the following form. For each $x \in L_\alpha$ let $A_x \subseteq \mathbb C$ be a maximal antichain of elements deciding $\check{x} \in \dot{X}$. Then $\dot{X}$ is equivalent to the name $$\dot{X}':=\bigcup_{x \in L_\alpha} \bigcup_{p \in A_x} \{\langle p, x, i\rangle \} $$ where $i \in 2$ and $\langle p, x, 1\rangle \in \dot{X} '$ if and only if $p \forces \check{x} \in \dot{X}$. Note that this itself is a subset of $L_\alpha$. Moreover to say that some subset $W \subseteq L_\alpha$ is a Cohen name like this is first order since it only quantifies over elements of $L_\alpha$ (note that for Cohen forcing, conditions are elements of $L_\alpha$). Let us denote by $\mathbb C$-$\mathsf{NAME}$ the arithmetical set of such names. 
        
        Now consider the statement (as evaluated as a second order statement in $L_\alpha$)  ``$\forall p \in \mathbb C$ $\exists \dot{X} \in \mathbb C$-$\mathsf{NAME}$ $\forall \dot{Y} \in \mathbb C$-$\mathsf{NAME}$ $\exists \dot{Z} \in \mathbb C$-$\mathsf{NAME}$ $p \forces \psi(\dot{X}, \dot{Y}, \dot{Z}, \check{\beta})$". Since the set $\mathbb C$-$\mathsf{NAME}$ is first order definable it suffices to show that ``$p \forces \psi(\dot{X}, \dot{Y}, \dot{Z}, \check{\beta})$" is a second order  quantifier free statement as in this case, by the $\Sigma^1_3$-reflecting-ness of $\alpha$ this would have to be true at some smaller $\beta$ contradicting the assumption on $\varphi(x)$. However this is simply a easy observation on the way forcing is defined using the fact that statements of the form ``$x \in \dot{X}$" etc are arithmetical since they are coded into the presentation of the name and otherwise the standard way of defining forcing inductively is second order quantifier free in this context. 
\end{proof}

The challenge of supping this proof up to prove Theorem \ref{Cohen} is that for an uncountable index set we can no longer describe the forcing conditions, relations, names etc as subsets of some countable $L_\alpha$ -- indeed the entire set up is no longer a priori expressible. To rectify this we will first reduce to the case that $\kappa = \omega_1$ and then show how to handle this case. For this first task note that if $\beta < \alpha$ are countable ordinals and $\varphi (x)$ is a $\Gamma^1_n$-formula for some $n < \omega$ and some $\Gamma \in \{\Pi, \Sigma\}$ then saying ``$L_\alpha \models \varphi(\beta)$" is $\Gamma^1_n$ in the sense of the projective hierarchy with parameter (the real coding) $L_\alpha$.

\begin{lemma}
    Let $X$ be an arbitrary uncountable index set. For any real $r \in 2^\omega$ and any projective statement $\varphi(x)$ we have that if $\varphi(\check{r})$ is forced by $\mathbb C_X$ if and only if it is forced by $\mathbb C_{\omega_1}$. \label{projectivetruthisstable}
\end{lemma}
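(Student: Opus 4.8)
The plan is to reduce to the case of a cardinal and then prove a localized, strengthened statement by induction on projective complexity. Since $\mathbb{C}_X$ is forcing equivalent to $\mathbb{C}_\lambda$ for $\lambda=|X|$, it suffices to compare $\mathbb{C}_\lambda$ with $\mathbb{C}_{\omega_1}$; if $\lambda=\omega_1$ there is nothing to prove, so I assume $\lambda>\omega_1$. Because $\varphi(\check r)$ mentions only the ground-model parameter $\check r$, weak homogeneity of Cohen forcing guarantees that each of $\mathbb{C}_\lambda$ and $\mathbb{C}_{\omega_1}$ decides it outright, so the real content is that the two truth values agree. The object I would induct on is the following strengthening, which drags along a countable block of coordinates: for every projective $\vartheta(x,y)$, every real $r$, every countable $X_0\subseteq\omega_1$, every $\mathbb{C}_{X_0}$-name $\dot s$ for a real, and every $p\in\mathbb{C}_{X_0}$ (so that $\operatorname{supp}(p)\subseteq X_0$, which may be assumed after enlarging $X_0$),
\[ p \forces_{\mathbb{C}_\lambda} \vartheta(\check r,\dot s)\iff p \forces_{\mathbb{C}_{\omega_1}} \vartheta(\check r,\dot s), \]
where $\mathbb{C}_{X_0}$ is regarded as the common subforcing on the coordinates in $X_0$. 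Taking $X_0=\emptyset$ and $p$ the trivial condition recovers the Lemma.

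The argument rests on two features of Cohen forcing. By the ccc, every name for a real is equivalent to one supported on countably many coordinates, so any witnessing or refuting real in a $\mathbb{C}_\lambda$-extension already lives in a countable subproduct. Moreover, any permutation of the index set induces an automorphism of the forcing; since a countable set of coordinates can always be mapped into $\omega_1$ while fixing any prescribed countable block, such automorphisms let me relocate witnesses into $\omega_1$ without disturbing the data $\check r$, $\dot s$, $p$ already committed. The base case, $\vartheta$ arithmetical, is immediate: the truth of $\vartheta(\check r,\dot s)$ in any extension depends only on $r$ and the interpretation of $\dot s$, and since $\dot s$ is a $\mathbb{C}_{X_0}$-name this is computed identically over $\mathbb{C}_\lambda$, over $\mathbb{C}_{\omega_1}$, and over $\mathbb{C}_{X_0}$, by arithmetic absoluteness.

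For the quantifier steps I would treat each direction separately. In the existential step $\vartheta=\exists z\,\chi$, the passage from $\mathbb{C}_\lambda$ to $\mathbb{C}_{\omega_1}$ is the substantive one: given $q\le p$ in $\mathbb{C}_{\omega_1}$, extract from $q\forces_{\mathbb{C}_\lambda}\exists z\,\chi$ a condition $q'$ and a name $\dot t$ for a witness, both countably supported; relocate their new coordinates into $\omega_1\setminus X_0$ by an automorphism $\pi$ fixing $X_0$ pointwise, obtaining $q''=\pi(q')\le q$ and $\dot t'=\pi(\dot t)$ supported in $\omega_1$ with $q''\forces_{\mathbb{C}_\lambda}\chi(\check r,\dot s,\dot t')$, and then apply the induction hypothesis to $\chi$ and the countably supported name for $(\dot s,\dot t')$ to obtain a witness below $q$ over $\mathbb{C}_{\omega_1}$. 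Since $q\le p$ was arbitrary, $p\forces_{\mathbb{C}_{\omega_1}}\exists z\,\chi$. The reverse direction is easier, as an $\omega_1$-supported witness is already a $\mathbb{C}_\lambda$-name and one applies the hypothesis directly. The universal step $\vartheta=\forall z\,\chi$ is dual, the substantive direction now being the transfer from $\mathbb{C}_{\omega_1}$ to $\mathbb{C}_\lambda$, where an arbitrary $\mathbb{C}_\lambda$-name counterexample is relocated into $\omega_1$ before invoking the hypothesis; equivalently one may set up the induction to close under negation and deduce the universal case from the existential one.

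The main obstacle is organizational but is genuinely the crux: maintaining consistent support bookkeeping across alternating quantifiers, so that at each step the relocating automorphism fixes exactly the coordinates carrying the already-chosen data while injecting the fresh countable witness into the uncountably many unused coordinates of $\omega_1$. This is precisely where the threshold $\omega_1$ enters and cannot be lowered: each quantifier needs access to a fresh countable block disjoint from countably much committed material, and $\omega_1$ is the least cardinal in which such blocks are always available; the same observation shows that coordinates beyond $\omega_1$ are never needed, since no single quantifier alternation ever consults more than countably many of them. I expect the only steps requiring care to be the verification that the relocating automorphism commutes with the forcing relation for $\chi$ and the ccc reduction of each witnessing name to countable support.
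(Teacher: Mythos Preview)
Your approach is correct and genuinely different from the paper's. The paper argues by induction on projective complexity but, at the inductive step, passes to an intermediate extension: given a witness $\dot y$ with countable support $Y$, it decomposes $\mathbb C_X \cong \mathbb C_Y \times \mathbb C_{X\setminus Y}$, works in $V[G_Y]$ with the evaluated real $y$, and applies the inductive hypothesis there (to the remaining uncountable factor versus $\mathbb C_{\omega_1}$); the base case is handled by Shoenfield absoluteness rather than arithmetic absoluteness. Your argument instead stays entirely in the ground model by loading the induction with an extra countably-supported name parameter and using index-permuting automorphisms to relocate each fresh witness into $\omega_1$. Both are standard maneuvers; the paper's semantic decomposition avoids the support bookkeeping you flag as the crux, while your automorphism argument avoids reasoning inside shifting intermediate models and makes the role of the threshold $\omega_1$ more transparent.

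One small point to tighten in your existential step: the automorphism $\pi$ must fix not only $X_0$ but also $\mathrm{supp}(q)$ (still countable and contained in $\omega_1$), since otherwise $\pi(q')\le q$ need not hold. Your closing remarks on bookkeeping make clear you have this in mind; just say it explicitly where the step is carried out. With that adjustment the forward direction goes through, and your claim that the reverse direction is easier is justified: a maximal antichain of witnesses below $p$ in $\mathbb C_{\omega_1}$ remains maximal below $p$ in $\mathbb C_\lambda$ because $\mathbb C_{\omega_1}$ is a complete subforcing, so the inductive hypothesis applied to each piece immediately yields $p\Vdash_{\mathbb C_\lambda}\exists z\,\chi$.
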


The above says roughly that any two extensions by uncountably many Cohen reals agree about projective truth. Before proving this we recall briefly some important terminology. If $X$ is a set and $p \in \mathbb C_X$ is a condition then denote by ${\rm sup}(p)$ the {\em support of} $p$, i.e. the set of $i \in X$ so that for some $k < \omega$ the pair $\langle i, k\rangle$ is in the domain of $p$. Note this is finite. We can extend the idea of a support to a name by taking the union of the supports of the conditions that appear hereditarily in the name. If $\dot{x}$ is a $\mathbb C_X$-name then then the evaluation of $\dot{x}$ already appears in any generic extension by $\mathbb C_{{\rm sup}(\dot{x})}$. Note that if $\dot{x}$ is a nice name for a real then its support is countable. 

We will also need in the proof that Cohen forcing is {\em weakly homogeneous}, see Jech \cite[Theorem 26.12]{Je03} for a definition but what we need is that if $a \in V$ is in the ground model and $\Theta(a)$ is a formula in the language of set theory then the maximal condition of $\mathbb C_X$ forces $\Theta(\check{a})$ or $\neg \Theta (\check{a})$ (for any $X$). 

\begin{proof}

The proof is by induction on $n < \omega$, where $\varphi(r)$ is $\Sigma^1_n$ or $\Pi^1_n$. The base case and in fact up to $n=2$ is covered by Shoenfield absoluteness. Fix an integer $3 \leq n< \omega$ and suppose now that the lemma holds for every $\Sigma^1_n$ or $\Pi^1_n$ formula (and every real parameter $r$). There are two cases. First let $\varphi (x)$ be $\Sigma^1_{n+1}$ and write it as $\exists y \psi(x, y)$ with $\psi$ a $\Pi^1_n$ formula. First that some condition $p \in \mathbb C_X$ forces $\exists y \psi(\check{r}, y)$. By the ccc of Cohen forcing, there is a countable $Y \subseteq X$ so that ${\rm sup}(p) \subseteq Y$ and there is a $\mathbb C_Y$-name $\dot{y}$ so that $p\forces_{\mathbb C_X} \psi(\check{r}, \dot{y})$ (note that even though $\dot{y}$ is in the extension by $\mathbb C_Y$ the statement might not hold there). We can now decompose $\mathbb C_X$ as $\mathbb C_Y \times \mathbb C_{X\setminus Y}$. Let $G_Y \times G_{X\setminus Y}$ be generic for this poset. Let $y = \dot{y}^{G_Y} \in V[G_Y]$. Since $X$ is uncountable, $X \setminus Y$ is also uncountable. Now apply the inductive hypothesis in $V[G_Y]$ to $\psi(y, r)$ and $\mathbb C_{X \setminus Y}$. Namely, since $\psi$ is $\Pi^1_n$ we have that $V[G_Y] \models$``$\forces_{\mathbb C_{\omega_1}} \psi( \check{y}, \check{r})$". Thus $\psi(y, r)$ holds in $V[G_Y][G_{\omega_1}]$ for any generic $G_{\omega_1} \subseteq \mathbb C_{\omega_1}$. However forcing with $\mathbb C_Y \times \mathbb C_{\omega_1}$ is the same as forcing with $\mathbb C_{Y \cup \omega_1}$ (assuming without loss of generality that $Y \cap \omega_1 = \emptyset$) and the latter, being a set of size $\omega_1$ is the same as forcing over $V$ with $\mathbb C_{\omega_1}$, thus completing this case.

Next assume that no $p \in \mathbb C_X$ forces that $\exists y \psi(\check{r}, y)$. Then the maximal condition forces that $\forall y \neg \psi (\check{r}, y)$. We want to show that $\mathbb C_{\omega_1}$ also forces $\forall y \neg \psi (\check{r}, y)$. If this is not true then there is a $\mathbb C_{\omega_1}$-name $\dot{y}$ (which we can equally think of as a $\mathbb C_X$-name) with (countable) support $Y$ so that $\forces_{\mathbb C_{\omega_1}} \psi(\check{r}, \dot{y})$. Let $G_Y \subseteq\mathbb C_Y$ be generic and work in $V[G_Y]$ with $\dot{y}^{G_Y} = y$. Since $\omega_1 \setminus Y$ is still of size $\omega_1$ we have that in this model $\mathbb C_{\omega_1}$ forces that $\psi(\check{r}, \check{y})$ and, by the inductive hypothesis so does $\mathbb C_X$. But this means back in $V$ that $\mathbb C_X$ forces $\exists y \psi(\check{r}, y)$, which is a contradiction.

    We now turn to the case where $\varphi(x)$ is a $\Pi^1_{n+1}$ statement. Let $\varphi(x)$ be written as $\forall y \psi (x, y)$ where $\psi$ is a $\Sigma^1_{n}$-formula. If no condition in $\mathbb C_X$ forces $\forall y \psi (\check{r}, y)$ then the maximal condition forces $\exists y \neg \psi (\check{r}, y)$ and we reduce to the previous case. If there is a $p \in \mathbb C_X$ forcing $\forall y \psi (\check{r}, y)$ then either, by weak homogeneity, the maximal condition of $\mathbb C_{\omega_1}$ also forces $\forall y \psi (\check{r}, y)$, in which case we are done or else the maximal condition of $\mathbb C_{\omega_1}$ forces that $\exists y \neg \psi (\check{r}, y)$ in which case the proof from the previous case implies $\mathbb C_X$ forces $\exists y \neg \psi (\check{r}, y)$ as well, which is a contradiction.

\end{proof}

We now continue with the proof of Theorem \ref{Cohen}. In light of Lemma \ref{projectivetruthisstable} it suffices to consider the case that $X = \omega_1$, which we will do now without further comment. We need one more lemma about $\mathbb C_{\omega_1}$. Below given linear orders $A$ and $B$ write $A \leq B$ if $A$ can be order embedded into $B$. Generalizing the fact that if $\alpha < \beta$ are ordinals then $\mathbb C_\alpha$ is a complete suborder of $\mathbb C_\beta$, note that if $A \leq B$ then we can treat $\mathbb C_A$ as a regular suborder of $\mathbb C_B$ by extending any order embedding of $A$ into $B$ to one from $\mathbb C_A$ into $\mathbb C_B$. Without further comment we will therefore treat $\mathbb C_A$-names as $\mathbb C_B$-names in such situations.

\begin{lemma}
Let $r \in 2^\omega$ be a real and $n < \omega$. Let $\varphi (x)$ be a projective statement.
\begin{enumerate}
    \item If $\varphi (x)$ is a $\Sigma^1_n$-formula of the form $\exists y_0 \forall y_1 \hdots \psi(x, y_0, y_1, \hdots)$ then $\forces_{\mathbb C_{\omega_1}} \varphi(\check{r})$ if and only if there is a countable linear order $A_0$ and a $\mathbb C_{A_0}$-name $\dot{y}_0$ so that for each countable linear order $A_1$ and each $\mathbb C_{A_1}$-name $\dot{y}_1$ if $A_0 \leq A_1$ then there is a countable linear order $A_2$ so that $A_1 \leq A_2$ and a $\mathbb C_{A_2}$-name $\dot{y}_2$ so that $\hdots$ so that $\forces_{\mathbb C_{A_{n-1}}} \psi(\check{r}, \dot{y}_0, \dot{y}_1, \hdots)$.
    \item If $\varphi (x)$ is a $\Pi^1_n$-formula of the form $\forall y_0 \exists y_1 \hdots \varphi(r, y_0, y_1, \hdots)$ then $\forces_{\mathbb C_{\omega_1}} \varphi(\check{r})$ if and only if for every countable linear order $A_0$ and every $\mathbb C_{A_0}$-name $\dot{y}_0$ there is a countable linear order $A_1$ and a $\mathbb C_{A_1}$-name $\dot{y}_1$ so that $A_0 \leq A_1$ and for every countable linear order $A_2$ and every $\mathbb C_{A_2}$-name $\dot{y}_2$ if $A_1 \leq A_2$ then $\hdots$ so that $\forces_{\mathbb C_{A_{n-1}}} \psi(\check{r}, \dot{y}_0, \dot{y}_1, \hdots)$.
    \end{enumerate}
    \label{LOlemma}
\end{lemma}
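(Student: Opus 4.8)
The plan is to prove both items simultaneously by induction on $n$, establishing a slightly stronger statement that keeps track of witnesses already chosen. Concretely, I would strengthen the claim so that it applies not only to the closed instance $\varphi(\check r)$ but to formulas $\psi(x,\vec z)$ whose displayed free variables $\vec z$ have been instantiated by $\mathbb C_A$-names $\vec{\dot z}$ over some fixed countable linear order $A$; the conclusion is then that $\forces_{\mathbb C_{\omega_1}}\psi(\check r,\vec{\dot z})$ holds if and only if the corresponding unfolding holds, where now every linear order produced in the unfolding is required to extend $A$ (that is, $A \leq A_0 \leq A_1 \leq \cdots$). The lemma is the special case $A = \emptyset$ with $\vec z$ empty. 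The two engines are the existence of nice names of countable support for reals added by $\mathbb C_{\omega_1}$, and the product decomposition $\mathbb C_{\omega_1} \cong \mathbb C_A \times \mathbb C_{\omega_1\setminus A}$ together with the fact that $|\omega_1\setminus A| = \omega_1$.

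The base case is reached when $\psi$ is second order quantifier free, hence arithmetic in its real arguments; here the claim is that $\forces_{\mathbb C_{\omega_1}}\psi(\check r,\vec{\dot z})$ iff $\forces_{\mathbb C_A}\psi(\check r,\vec{\dot z})$. This follows by decomposing $\mathbb C_{\omega_1}\cong\mathbb C_A\times\mathbb C_{\omega_1\setminus A}$ and observing that the value of $\vec{\dot z}$ is already computed in $V[G_A]$ while arithmetic statements about reals of $V[G_A]$ are absolute to any further extension. This is precisely the point at which the uncountable remainder of the forcing is discarded and we are left, at the bottom of the unfolding, with the countable poset $\mathbb C_{A_{n-1}}$.

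For the induction step at an existential quantifier write $\psi = \exists y\,\vartheta$ with $\vartheta$ of lower complexity. For the forward direction, apply the maximal principle to obtain a single $\mathbb C_{\omega_1}$-name $\dot y$, which I take to be a nice name for a real and hence of countable support $B$, with $\forces_{\mathbb C_{\omega_1}}\vartheta(\check r,\vec{\dot z},\dot y)$; setting $A_0 = A\cup B$ and applying the induction hypothesis with base $A_0$ and the enlarged tuple of played names yields the required witness. The reverse direction is immediate, since any witnessing name over a countable $A_0 \geq A$ is in particular a $\mathbb C_{\omega_1}$-name. Dually, at a universal quantifier $\psi = \forall y\,\vartheta$, the forward direction is easy: every name $\dot y$ over $A_0 \geq A$ evaluates to a real of the extension, so $\forces_{\mathbb C_{\omega_1}}\forall y\,\vartheta$ gives $\forces_{\mathbb C_{\omega_1}}\vartheta(\check r,\vec{\dot z},\dot y)$, which the induction hypothesis converts to the unfolding. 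The reverse direction is handled contrapositively: if $\forall y\,\vartheta$ is not forced then some condition forces $\neg\vartheta(\check r,\vec{\dot z},\dot y)$ for a nice name $\dot y$ of countable support, so $\vartheta(\check r,\vec{\dot z},\dot y)$ is not forced and the induction hypothesis produces a countable $A_0 \geq A$ witnessing the failure of the unfolding.

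The step I expect to require the most care is making sense of the right hand side, which quantifies over abstract countable linear orders $A_i$ rather than over subsets of $\omega_1$. To feed such an abstract $A_0 \geq A$ into the relation $\forces_{\mathbb C_{\omega_1}}$ one must fix an order embedding of $A_0$ into $\omega_1$ extending the one already used for $A$, and one must check that the truth value of $\forces_{\mathbb C_{\omega_1}}\vartheta(\check r,\vec{\dot z},\dot y)$ is independent of this choice. This is exactly where homogeneity enters: any two such embeddings differ by a permutation of the index set $\omega_1$ fixing the coordinates of $A$, and such a permutation induces an automorphism of $\mathbb C_{\omega_1}$ fixing $\check r$ and $\vec{\dot z}$ while carrying the one copy of $\dot y$ to the other, so the forcing relations agree. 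Establishing this invariance once, as a preliminary, makes the strengthened induction hypothesis well posed; and the monotonicity requirement $A_0 \leq A_1 \leq \cdots$ in the unfolding is precisely what guarantees that the successive regular embeddings $\mathbb C_{A_0}\hookrightarrow\mathbb C_{A_1}\hookrightarrow\cdots$ compose, so that all the names played can be regarded coherently as names over a single countable poset.
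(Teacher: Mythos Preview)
Your proposal is correct and follows essentially the same induction-on-$n$ strategy as the paper, using the ccc to extract countable-support names and the product decomposition of $\mathbb C_{\omega_1}$. The packaging differs in two minor ways: the paper keeps the parameter $r$ in the ground model throughout and applies the induction hypothesis \emph{inside} the intermediate extension $V^{\mathbb C_A}$ (so that the previously chosen witness becomes a ground-model real there), whereas you achieve the same effect by loading name parameters $\vec{\dot z}$ into a strengthened hypothesis over $V$; and the paper invokes weak homogeneity to convert ``$\not\forces\varphi(\check r)$'' into ``$\forces\neg\varphi(\check r)$'' at each outer quantifier, while your strengthened hypothesis lets you argue the universal backward direction directly by contrapositive without that step, reserving homogeneity only for the embedding-invariance issue you flag at the end. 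Both organizations are sound and amount to the same argument.
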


Note that despite the verbose phrasing this lemma is essentially a rephrasing of the well known fact that every boldface $\Delta^1_1$ statement forced by adding $\omega_1$-many Cohen reals is already true after adding some initial $\alpha$-many. The use of linear orders is somewhat unnecessary and could be replaced by e.g. arbitrary countable sets and their supersets. However the former is useful for coding which we will use to finish the proof of Theorem \ref{Cohen}. In any case the point of $A_i \leq A_{i+1}$ is just so that we can treat $\dot{y}_i$ as a $\mathbb C_{A_{i+1}}$-name.

\begin{proof}
    We will prove items (1) and (2) at the same time as their proofs are intertwined. This is because, by the weak homogeneity of Cohen forcing, the backward direction of (2) is essentially the same as the forward direction of (1) and vice versa. The proof is by induction on $n < \omega$. The cases $n \leq 2$ are by Shoenfield absoluteness noting that if $A$ is a countable linear order than $\mathbb C_A$ can be treated as a regular suborder of $\mathbb C_{\omega_1}$. Fix $3 \leq n < \omega$, a real $r \in 2^\omega$ and a $\Sigma^1_n$ formula $\varphi(x)$ which we write as $\exists y_0 \forall y_1 \hdots \psi(x, y_0, y_1, \hdots)$. Suppose first that $\forces_{\mathbb C_{\omega_1}} \varphi(\check{r})$ holds. This means that there is a $\mathbb C_{\omega_1}$-name $\dot{y}_0$ so that $\forces_{\mathbb C_{\omega_1}}\forall y_1 \hdots \psi(\check{r}, \dot{r}_0, y_1, \hdots)$. By the ccc there is a countable $A \subseteq \omega_1$ so that $\dot{y}_0$ is equivalent to a $\mathbb C_A$-name. Taking $A_0 = A$ (with the induced suborder from $\omega_1$) and applying the inductive hypothesis to (2) in $V^{\mathbb C_A}$ completes the proof of this case. 
    
    For the other direction assume that $\mathbb C_{\omega_1}$ does not force $\varphi(\check{r})$ and note that this implies that that $\forces_{\mathbb C_{\omega_1}} \neg \varphi (\check{r})$ i.e. $\forces_{\mathbb C_{\omega_1}} \forall y_0, \hdots\neg\psi (\check{r}, y_0, \hdots))$. Thus for each $\mathbb C_{\omega_1}$-name $\dot{y}_0$ we have that $\forces_{\mathbb C_{\omega_1}} \exists y_1, \hdots\neg\psi (\check{r}, \dot{y}_0, y_1, \hdots))$. However any such name is equivalent to a $\mathbb C_{Y}$-name for some countable $Y \subseteq X$. Since the order on $Y$ does not matter we can order it however we want. Alongside the inductive assumption this completes the proof.
\end{proof}

We now complete the proof of Theorem \ref{Cohen}. 

\begin{proof}[Proof of Theorem \ref{Cohen}]
We can code countable linear orders as reals by treating their domains as $\omega$ in the standard way. Let $LO$ be the set of all such reals. Note that this set is arithmetic. Moreover if $A, B \in LO$ then the statement $A \leq B$ is well known and easily verified to be $\Sigma^1_1$. We will use these facts in the proof without further comment.

Fix $n < \omega$ and $\Gamma \in \{\Sigma, \Pi\}$. Let $\alpha < \omega_1$ be $\Gamma^1_n$-reflecting. By Lemma \ref{projectivetruthisstable} it suffices to show that $\mathbb C_{\omega_1}$ forces that $\alpha$ is $\Gamma^1_n$-reflecting. Suppose towards a contradiction that it is forced by $\mathbb C_{\omega_1}$ that for some fixed $\beta < \alpha$ and $\Gamma^1_n$-formula $\varphi$ that $L_\alpha \models \varphi(\beta)$ but no smaller $L_\gamma$ models $\varphi(\beta)$. As in the case of one Cohen real, the argument actually depends on which $n$ and $\Gamma$ almost not at all so let us consider the case $\Gamma = \Sigma^1_3$ and leave the cosmetic modifications for the other cases to the reader. Let us be clear about the form of $\varphi(x)$ and write it as $\exists X \forall Y \exists Z \psi (X, Y, Z, x)$ with $\psi$ second order quantifier free. By Lemma \ref{LOlemma} this is equivalent to the following statement:
\begin{align*}
\exists A \in LO\, &\exists \dot{X} \in \mathbb C_{A}{\rm -}\mathsf{NAME}\,  \Big[ \forall B \in LO\, \forall \dot{Y} \in \mathbb C_{B}{\rm -}\mathsf{NAME}\, \big[ A \leq B \\
&\to \exists C \in LO\, \exists \dot{Z} \in \mathbb C_{C}\, [ B \leq C \land \forces_{\mathbb C_C} \check{L}_\alpha \models \psi(\dot{X}, \dot{Y}, \dot{Z}, \check{\beta})]\big]\Big]
\end{align*}

    Counting quantifiers and applying coding in the way we have done this whole section however reveals this to be a $\Sigma^1_3$-statement about $L_\alpha$ which must therefore reflect to some smaller $L_\gamma$, contradicting the assumption.
\end{proof}

\section{Borel ccc Forcing}
Theorem \ref{Cohen} has a larger generalization in the case that $\kappa = 1$ (or $X$ is a Borel subset of a Polish space actually). Recall that a forcing notion $\P$ is called {\em Borel} if the underlying set of $\P$ as well as the order and incompatibility relations are Borel sets in some appropriate Polish space. Note that in this case the compatibility relation is also Borel. For the purposes of this section let us call a forcing notion {\em Borel+} if it is Borel and has the additional property that for each projective formula $\varphi(x)$ and every ground-model real $a$ either the maximal condition of $\P$ forces $\varphi(\check{a})$ or its negation. Note that many natural Borel ccc forcing notions have this property such as random forcing and Hechler forcing. The main theorem of this section is the following.

\begin{theorem}
     Let $\P$ be a lightface Borel+ ccc forcing notion. Let $n < \omega$ and $\Gamma \in \{\Pi, \Sigma\}$. For any set $X$ and any countable ordinal $\alpha$ if $\alpha$ is $\Gamma^1_n$-reflecting then it is forced to be so by $\P$. \label{Borel}
\end{theorem}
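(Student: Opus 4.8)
The plan is to run the argument for the single Cohen real (the warm-up lemma preceding Lemma \ref{projectivetruthisstable}) essentially verbatim, isolating the two places where the move from $\mathbb C$ to a general lightface Borel+ ccc forcing $\P$ actually changes something. Since $\P$ is lightface Borel, its underlying set, ordering and incompatibility relation are defined by parameter-free $\Delta^1_1$ (hyperarithmetic) formulas; hence ``$p\in\P$'', ``$p\le_\P q$'', ``$p\perp q$'' and, given an enumeration, ``$A$ is a maximal antichain'' are correctly and uniformly computed by the second-order logic over every $L_\gamma$, with no outside real parameters. This is exactly where lightface-ness is used: were $\P$ only boldface Borel with a code $c$, we could not assume $c$ is available over a given countable $L_\gamma$. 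The one genuine conceptual shift from the Cohen case is that conditions of $\P$ are now reals, i.e.\ second-order objects over $L_\gamma$, rather than elements of $L_\gamma$.

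First I would set up nice names. By the ccc, every $\P$-name for a subset of $L_\alpha$ (from the standpoint of the second-order logic over $L_\alpha$) is forced equal to a nice name: for each $x\in L_\alpha$ choose a maximal antichain $A_x$ deciding $\check x\in\dot X$, enumerate it, and record the triples $\langle p,x,i\rangle$ as in the warm-up. The only change is that each condition $p$ is now a real; spreading it across $\omega\subseteq L_\alpha$ via pairing still exhibits the nice name as a subset of $L_\alpha$, so $\P$-$\mathsf{NAME}$ is a second-order predicate over $L_\alpha$. The essential difference is that verifying $A_x$ is a maximal antichain now requires a universal quantifier over conditions, i.e.\ over reals; consequently ``$W\in\P$-$\mathsf{NAME}$'' is $\Pi^1_1$ over $L_\alpha$ rather than first-order as it was for $\mathbb C$. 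Keeping track of this bounded overhead is the crux of the proof.

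Next I would fix the witness. By Borel+, applied to the ground-model real coding $L_\alpha$, the maximal condition $\mathbb 1$ decides the projective statement ``$\alpha$ is $\Gamma^1_n$-reflecting'' and, for each fixed $\beta<\alpha$ and $\Gamma^1_n$-formula $\varphi$ (coded inside $L_\alpha$), the witness statement ``$L_\alpha\models\varphi(\beta)$ while no smaller $L_\gamma$ does''. Hence if $\P$ fails to force ``$\alpha$ is $\Gamma^1_n$-reflecting'', there is a single pair $(\beta,\varphi)$ with $\mathbb 1\forces_\P$``$L_\alpha\models\varphi(\beta)$'' and $\mathbb 1\forces_\P$``no smaller $L_\gamma$ models $\varphi(\beta)$''. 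Taking $\varphi=\exists X\forall Y\exists Z\,\psi$ (the representative $\Sigma^1_3$ case, the others being cosmetic), the forcing theorem together with the maximum principle rewrites $\forces_\P\varphi(\beta)$ as the second-order statement over $L_\alpha$ that there is $\dot X\in\P$-$\mathsf{NAME}$ such that for all $\dot Y\in\P$-$\mathsf{NAME}$ there is $\dot Z\in\P$-$\mathsf{NAME}$ with $\mathbb 1\forces_\P L_\alpha\models\psi(\dot X,\dot Y,\dot Z,\check\beta)$.

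The main work, and the step I expect to be the obstacle, is the quantifier count. The formula has three real (second-order) quantifiers, each carrying its $\Pi^1_1$ or $\Sigma^1_1$ $\P$-$\mathsf{NAME}$-restriction, over the matrix ``$\mathbb 1\forces_\P L_\alpha\models\psi$''. I would argue this matrix is projective of a fixed finite complexity, independent of the number of first-order quantifiers inside $\psi$: since $\psi$ is second-order quantifier free, its satisfaction in the fixed countable structure $L_\alpha$ with the name-evaluations as predicates is $\Delta^1_1$ in those evaluations, and a standard computation of the complexity of the forcing relation for a $\Delta^1_1$ ccc poset shows that forcing such a statement is projective of bounded level. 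Absorbing the $\Pi^1_1/\Sigma^1_1$ overheads into the adjacent analytical quantifiers—this is exactly the place the hypothesis $n\ge 3$ is needed, the cases $n\le 2$ being handled by Shoenfield absoluteness as before—the whole statement is $\Sigma^1_3$, i.e.\ $\Gamma^1_n$, and is given by a formula uniform in the parameter $L_\gamma$. Since $\alpha$ is $\Gamma^1_n$-reflecting and this $\Gamma^1_n$-statement holds over $L_\alpha$, it reflects to some $\gamma\in(\beta,\alpha)$; but the reflected instance asserts precisely $\forces_\P L_\gamma\models\varphi(\beta)$, contradicting $\mathbb 1\forces_\P$``no smaller $L_\gamma$ models $\varphi(\beta)$''. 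The delicate points throughout are that lightface-Borel gives uniform definability over every $L_\gamma$, the ccc gives nice names (subsets of $L_\gamma$) and the maximum principle, and Borel+ supplies the homogeneity needed both to fix $(\beta,\varphi)$ and to pass to $\mathbb 1\forces_\P$.
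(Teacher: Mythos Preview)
Your quantifier count is off by one, and this is precisely the obstacle the paper isolates. Take your representative $\Sigma^1_3$ case. The innermost block is $\exists \dot Z\,[\dot Z\in\P\text{-}\mathsf{NAME}\wedge\text{matrix}]$, where the matrix is, as you say, $\Delta^1_1$ and the $\mathsf{NAME}$ predicate is $\Pi^1_1$. But $\exists Z\,[\Pi^1_1(Z)\wedge\Delta^1_1]$ is $\Sigma^1_2$, not $\Sigma^1_1$: there is no adjacent universal quantifier inside to absorb the $\Pi^1_1$. Propagating outward, $\forall\dot Y\,[\Sigma^1_1\vee\Sigma^1_2]$ is $\Pi^1_3$, and $\exists\dot X\,[\Pi^1_1\wedge\Pi^1_3]$ is $\Sigma^1_4$. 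So your translated statement is $\Sigma^1_4$, not $\Sigma^1_3$, and reflection of $\alpha$ for $\Sigma^1_3$ gives you nothing. The same one-level overshoot occurs for every $n\ge 3$, regardless of the parity of the innermost quantifier; the ``absorption'' you invoke works for all the intermediate quantifiers but fails at the last one.

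The paper handles exactly this by proving (Lemmas \ref{pi12} and \ref{sigma12}) that for the innermost \emph{pair} of quantifiers, the forcing statement ``$\forces_\P L_\alpha\models\forall Y\exists Z\,\psi(\dot X,Y,Z,\beta)$'' is already $\Pi^1_2$ in $\dot X$ (and dually $\Sigma^1_2$). The trick is to avoid the $\Pi^1_1$ name predicate on $\dot Z$ altogether: by Shoenfield, the $\Sigma^1_1$ witness $Z$ can be found inside any transitive $L_\beta[\dot X,\dot Y]\models\mathsf{ZF}^-$, hence a suitable $\dot Z$ already lives in this countable model, and quantifying over elements of a fixed countable structure adds no analytical complexity. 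With this in hand the outer $n-2$ quantifiers add $n-2$ levels and you land at $\Gamma^1_n$ as needed. Your sketch is otherwise on the right track; the missing ingredient is this stripping of the innermost quantifier.
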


We get the following immediate corollary exactly as before.

\begin{corollary}
    For any lightface Borel+ ccc forcing notion $\P$, all countable $\alpha$, all $n < \omega$ and all $\Gamma \in \{\Pi, \Sigma\}$, $\alpha$ is $\Gamma^1_n$-reflecting in $L$ if and only if it is $\Gamma^1_n$-reflecting in $L[G]$ for any generic $G \subseteq \mathbb P$.
\end{corollary}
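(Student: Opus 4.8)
The plan is to read the biconditional off Theorem \ref{Borel} and Lemma \ref{dontgodown}, handling the two directions separately and dispatching the low-complexity cases by absoluteness. Fix a lightface Borel+ ccc forcing notion $\P$, a countable ordinal $\alpha$, an $n < \omega$, $\Gamma \in \{\Pi,\Sigma\}$, and a generic $G \subseteq \P$ over $L$. For $n \geq 3$ the forward implication is Theorem \ref{Borel} and the backward implication is the contrapositive of Lemma \ref{dontgodown}; for $n \leq 2$ both implications follow from Shoenfield absoluteness. Assembling these pieces gives the corollary, exactly as in the analogous deduction made for Cohen forcing.

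For the forward direction, suppose $\alpha$ is $\Gamma^1_n$-reflecting in $L$. Applying Theorem \ref{Borel} with $L$ itself as the ground universe, $\P$ forces over $L$ that $\alpha$ is $\Gamma^1_n$-reflecting, and hence $L[G] \models$ ``$\alpha$ is $\Gamma^1_n$-reflecting''. The only point to note here is that Theorem \ref{Borel} is being invoked with the ground model taken to be $L$, which is legitimate since that theorem is stated for an arbitrary ground model. This covers the forward direction for every $n < \omega$.

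For the backward direction with $n \geq 3$ I would use Lemma \ref{dontgodown}. That lemma says non-reflectingness in $L$ passes upward to any outer model; equivalently, if $\alpha$ is $\Gamma^1_n$-reflecting in an outer model then it is $\Gamma^1_n$-reflecting in $L$. Taking the outer model to be $L[G]$ gives precisely ``reflecting in $L[G]$ $\Rightarrow$ reflecting in $L$''. Together with the previous paragraph this settles the corollary for all $n \geq 3$.

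It remains to treat $n \leq 2$, where Lemma \ref{dontgodown} does not apply. Here I would observe that for $\Gamma \in \{\Sigma,\Pi\}$ and $n \leq 2$ the satisfaction relation ``$L_\alpha \models \varphi(\beta)$'' is $\Sigma^1_2$ or $\Pi^1_2$ in the real coding $L_\alpha$, which is a real of $L$ since $\alpha$ is countable; by Shoenfield absoluteness this relation, and therefore the entire assertion that $\alpha$ is $\Gamma^1_n$-reflecting (a quantification over the countably many formulas $\varphi$ and the ordinals $\beta,\gamma < \alpha$ of absolute conditions), is absolute between $L$ and $L[G]$. This yields both directions at once in the small cases, matching the remark from the introduction that $\sigma^1_1,\sigma^1_2,\pi^1_1,\pi^1_2$ are computed correctly in $L$. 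I expect no genuine obstacle: the corollary is immediate once one is careful to apply Theorem \ref{Borel} over the ground model $L$, to read the backward implication off Lemma \ref{dontgodown}, and to cover $n \leq 2$ separately via Shoenfield.
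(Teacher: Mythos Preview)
Your proposal is correct and follows the same approach the paper intends: the paper simply says ``We get the following immediate corollary exactly as before,'' pointing back to the Cohen argument, which combines Theorem \ref{Borel} (applied in $L$) for the forward direction with Lemma \ref{dontgodown} for the backward direction. Your explicit treatment of the $n\leq 2$ case via Shoenfield absoluteness is a point the paper leaves implicit (it is alluded to only in the introduction), so if anything you are being slightly more careful than the paper itself.
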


We turn to the proof of Theorem \ref{Borel}. From now on fix a lightface Borel+ ccc forcing notion $\P$. Suppose $\dot{X}$ names a subset of $\omega$ (or a real). Then, in exactly the way described for Cohen forcing above, there is an equivalent name, a {\em nice name} $\dot{X}'$, which can be written as follows. For each $n < \omega$ let $A_n$ be a maximal antichain of conditions deciding $\check{n} \in \dot{X}$. Note that this is countable and label its elements $\{q^n_k\; |\; k < \omega\}$. Now write 
\[\dot{X}'=\bigcup_{n \in \omega} \big\{ \langle q^n_k, n, i\rangle \; | \; q^n_k \forces \check{n} \text{ if and only if }i = 0 \big\}.\] From now on we always only consider such names. Note that prima facie to say that a given (countable) set is a maximal antichain is coanalytic hence the set of nice names for reals can be coded as a coanalytic set. 
Moreover observe then that to say that for some condition $p \in \P$ that $p \forces \check{n} \in \dot{X}$ is $\Delta^1_1$ with parameters $p$ and $\dot{X}$ since $p$ forces $\check{n} \in \dot{X}$ if and only if for each $k < \omega$ if $p$ is compatible with $q^n_k$ then $\langle q^n_k, n, 0\rangle \in \dot{X}$. By induction on length of the formula this can be bootstrapped to easily show that ``$p \forces \varphi (\dot{X})$" is $\Delta^1_1$ with parameters $p$ and $\dot{X}$ for any arithmetical formula $\varphi$. We will use this moving forward without further comment. We begin with some simple but key lemmas.

\begin{lemma}
    Suppose $\varphi(x)$ is a $\Pi^1_2$ formula, $\P$ is a lightface Borel+ ccc forcing notion, $\alpha$ is a countable ordinal and $\dot{X}$ is a $\P$-name for a real. The statement ``$\forces_\P L_\alpha \models \varphi(\dot{X})$" is $\Pi^1_2(\alpha, \dot{X})$. \label{pi12}
\end{lemma}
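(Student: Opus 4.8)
The plan is to expand the $\Pi^1_2$ formula as $\varphi(x)=\forall Y\,\exists Z\,\theta(Y,Z,x)$ with $\theta$ second order quantifier free, and to compute the complexity of ``$\forces_\P L_\alpha\models\varphi(\dot X)$'' by peeling off the two quantifiers with the forcing theorem while bookkeeping projective complexity through the nice-name coding fixed before the lemma. Coding $L_\alpha$ by a real turns $\theta$ into an arithmetical matrix, so by the bootstrapping remark preceding the lemma the relation ``$q\forces_\P L_\alpha\models\theta(\dot Y,\dot Z,\dot X)$'' is $\Delta^1_1$ in $q$ together with the codes of $\alpha,\dot X,\dot Y,\dot Z$. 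First I would apply the forcing theorem to the outer universal (second order) quantifier: $\forces_\P L_\alpha\models\forall Y\,\exists Z\,\theta(Y,Z,\dot X)$ holds if and only if for every nice name $\dot Y$ for a subset of $L_\alpha$ one has $\forces_\P L_\alpha\models\exists Z\,\theta(\dot Y,Z,\dot X)$.

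The crux is to bound the inner statement, for fixed $\dot Y$. Under the coding, ``$L_\alpha\models\exists Z\,\theta(\dot Y,Z,\dot X)$'' is a $\Sigma^1_1$ assertion $\tau$ about the generic reals $\dot Y[G],\dot X[G]$, and the technical fact I would isolate and prove is that forcing a $\Sigma^1_1$ statement with a lightface Borel ccc poset stays $\Sigma^1_1$: the relation ``$q\forces_\P\tau(\dot Y,\dot X)$'' is $\Sigma^1_1$ in $q,\dot Y,\dot X,\alpha$. Granting this, $\forces_\P\tau(\dot Y,\dot X)$ is $\forall q\,(q\in\P\to q\forces_\P\tau)$, a universal real quantifier over a $\Sigma^1_1$ matrix, hence $\Pi^1_2$. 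The outer quantifier is then free: ``for every nice name $\dot Y$'' has the shape $\forall c\,(\mathrm{name}(c)\to\Pi^1_2)$, and since the coanalytic predicate $\mathrm{name}(c)$ contributes only $\neg\,\mathrm{name}(c)\in\Sigma^1_1\subseteq\Pi^1_2$ while $\Pi^1_2$ is closed under disjunction and universal number/real quantification, the whole statement remains $\Pi^1_2(\alpha,\dot X)$, as claimed.

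The main obstacle is exactly the isolated fact that forcing a $\Sigma^1_1$ statement remains $\Sigma^1_1$. A naive proof via the maximum principle would witness $\exists Z$ by a nice name $\dot Z$, i.e.\ by an existential real quantifier guarded by the coanalytic predicate ``is a nice name'', which would raise the inner complexity to $\Sigma^1_2$ and the whole lemma to $\Pi^1_3$; so this route must be avoided. Instead I would prove the fact by Mostowski absoluteness together with a reading of the $\Sigma^1_1$-witness off the generic: writing $\tau(y,x)$ as ill-foundedness of a tree $T^{y,x}$ recursive in the codes, and using that membership in $T^{\dot Y,\dot X}$ is decided $\Delta^1_1$-ly by the antichains of the nice names, one shows ``$q\forces_\P\tau(\dot Y,\dot X)$'' is equivalent to the ill-foundedness of an auxiliary $\Delta^1_1$ tree of finite approximations-to-a-branch paired with conditions, which is a $\Sigma^1_1$ condition. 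The delicate point to verify---where the ccc is essential---is that such an approximating branch genuinely corresponds to a condition below $q$ forcing an honest (generic) branch of $T^{\dot Y,\dot X}$, so that the witness is read continuously from the generic rather than quantified over as an abstract coanalytically-coded name; this is what keeps the existential witnessing real at the level $\Sigma^1_1$ and circumvents the coanalytic cost of nice names.
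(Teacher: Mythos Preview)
Your overall strategy is sound, and you correctly identify the crux: the naive route through the maximum principle costs a $\Pi^1_1$ guard on nice names and overshoots to $\Pi^1_3$, so the inner $\exists Z$ must be handled more carefully. The gap is in your proposed fix. The assertion that ``$q\forces_\P\tau(\dot Y,\dot X)$'' is $\Sigma^1_1$ for $\Sigma^1_1$ $\tau$, witnessed by ill-foundedness of the auxiliary tree of pairs $(s,p)$ with $p\leq q$ and $p\forces s\in T^{\dot Y,\dot X}$, fails in the direction you label ``delicate.'' Take $\P$ to be Cohen forcing, $\dot c$ the canonical name for the generic, and $\tau(c)$ the $\Pi^0_1$ (hence $\Sigma^1_1$) statement $c=0^\omega$, with tree $T^c=\{s\in 1^{<\omega}:c\upharpoonright|s|=0^{|s|}\}$. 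For $q=\varnothing$ the auxiliary tree has the infinite branch $\langle(0^n,0^n):n<\omega\rangle$, yet no Cohen condition forces $\dot c=0^\omega$. The problem is exactly what you flag: a descending sequence of conditions in a ccc forcing need not have a lower bound, so the branch of your auxiliary tree does not produce a single condition below $q$ forcing an honest branch of $T^{\dot Y,\dot X}$, and there is no continuous reading of the witness off the generic in general. The ccc does not rescue this---Cohen forcing is ccc.

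The paper circumvents the issue by a different mechanism. Rather than trying to push ``$\forces\tau$'' down to $\Sigma^1_1$, it bounds the search for the witnessing name $\dot Z$. By relativization of analytic truth, the witness $Z$ in $V[G]$ already lies in $L_\beta[\dot X,\dot Y][G]$ for any $\beta>\alpha$ with $L_\beta\models\mathsf{ZF}^-$; and because $\P$ is ccc, the countable model $L_\beta[\dot X,\dot Y]$ is correct about maximal antichains (this, not any fusion, is where ccc is actually used), so $L_\beta[\dot X,\dot Y][G]$ is a genuine generic extension and $Z$ has a name $\dot Z\in L_\beta[\dot X,\dot Y]$. Quantifying over elements of a fixed countable coded model $M\cong L_\beta[\dot X,\dot Y]$ adds no projective complexity, so the inner $\exists\dot Z$ is absorbed into first-order data about $M$ and the whole statement comes out $\Pi^1_2$. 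Your outline can be repaired along these lines, but not via the direct tree argument you sketch.
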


Here to be clear, we are treating $\dot{X}$ by the real coding it in the ground model and that is the parameter we are referring to, \textit{not} the evaluation in the generic extension. Before proving the lemma let us note what the issue is. A priori ``$\forces_\P L_\alpha \models \varphi(\dot{X})$" is equivalent to saying for all $\P$-names $\dot{Y}$ there exists a $\P$-name $\dot{Z}$ so that $\forces_\P L_\alpha \models \psi (\dot{X}, \dot{Y}, \dot {Z})$ where $\varphi(x)$ is of the form $\forall Y \exists Z \psi(x, Y, Z)$. Since being a $\P$-name is $\Pi^1_1$ this statement is prima facie $\Pi^1_3$ hence the content of the lemma consists of showing how to strip the innermost quantifier. 

\begin{proof}
    Fix $\varphi(x)$, $\P$, $\dot{X}$ and $\alpha$ as in the statement of the Lemma. Let $\psi$ be quantifer free so that $\varphi (x)$ has the form $\forall Y \exists Z \psi(x, Y, Z)$. Let $G \subseteq \P$ be generic and work briefly in $V[G]$. Let $\dot{X}^G = X$. Since $\forces_\P L_\alpha \models \varphi(\dot{X})$ for every $Y$ the statement $L_\alpha \models \exists Z \psi (X, Y, Z)$ holds in $V[G]$. Note that this is an analytic statement which therefore relativizes to any transitive set containing all of the requisite reals. In particular, if $\beta > \alpha$ and $L_\beta \models \mathsf{ZF}^-$ then $L_\beta [\dot{X}, \dot{Y}][G]$ is already a model of $L_\alpha \models \exists Z \psi (X, Y, Z)$ for any $\P$-name $\dot{Y}$ so that $\dot{Y}^G = Y$. Thus there is a real $Z \in L_\beta[\dot{X}, \dot{Y}][G]$ which (correctly) witnesses this fact.
    
    Fix $\beta$ and $\dot{Y}$ as above and move back to $V$. Since $\P$ is ccc if $A \in L_\beta[\dot{X}, \dot{Y}]$ is such that $L_\beta[\dot{X}, \dot{Y}] \models$``$A$ is a maximal antichain in $\P$" then this is correct as $\Pi^1_1$-statements are upwards absolute. Moreover $A \subseteq L_\beta[\dot{X}, \dot{Y}]$. It follows that $L_\beta[\dot{X}, \dot{Y}][G]$ is actually a generic extension of $L_\beta[\dot{X}, \dot{Y}]$ hence there is a name $\dot{Z} \in L_\beta[\dot{X}, \dot{Y}]$ for the real $Z$ we found above in $L_\beta[\dot{X}, \dot{Y}][G]$. Moreover since being a name is also $\Pi^1_1$ the model is also correct about this. 

    Putting all of this together we get that $\forces_\P L_\alpha \models \varphi(\dot{X})$ if and only if for all $\P$-names $\dot{Y}$ and all $\beta > \alpha$ and all well founded $M \cong L_\beta [\dot{X}, \dot{Y}] \models \mathsf{ZF}^-$ there is a $\dot{Z} \in M$ which $M$ thinks is a $\P$-name so that $\forces_\P \psi(\dot{X}, \dot{Y}, \dot{Z})$. We leave it to the reader to verify that this later statement is in fact $\Pi^1_2$ however note that the point is that quantifying over elements of $M$ or what $M$ models adds no quantifier complexity to the statement and this is how the innermost quantifier is stripped away. 
\end{proof}

We have a similar statement for $\Sigma^1_2$ formulas.
\begin{lemma}
    Suppose $\varphi(x)$ is a $\Sigma^1_2$ formula, $\P$ is a lightface Borel+ ccc forcing notion, $\alpha$ is a countable ordinal and $\dot{X}$ is a $\P$-name for a real. The statement ``$\forces_\P L_\alpha \models \varphi(\dot{X})$" is $\Sigma^1_2(\alpha, \dot{X})$. \label{sigma12}
\end{lemma}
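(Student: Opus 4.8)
The plan is to run the proof of Lemma~\ref{pi12} in dual form, arranging the model trick so that it strips an innermost \emph{universal} second-order quantifier and, crucially, so that the auxiliary model can be quantified \emph{existentially}. Write $\varphi(x)$ as $\exists Y\,\forall Z\,\psi(x,Y,Z)$ with $\psi$ second-order quantifier free. Using the forcing theorem for the second-order semantics over the countable structure $L_\alpha$ together with the maximal principle (available since $\P$ is a set forcing, and yielding a nice name by ccc), I first record that
\[
\forces_\P L_\alpha \models \varphi(\dot X) \iff \exists\,\dot Y\ (\dot Y \text{ a nice name})\ \big[\forces_\P L_\alpha \models \forall Z\,\psi(\dot X,\dot Y,Z)\big].
\]
Since being a nice name is $\Pi^1_1$, it then suffices to prove that the bracketed statement is $\Sigma^1_2(\alpha,\dot X,\dot Y)$: the whole right-hand side is then $\exists\dot Y\,[\Pi^1_1 \wedge \Sigma^1_2]$, and collapsing the two existential real quantifiers gives $\Sigma^1_2(\alpha,\dot X)$.

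The heart of the matter is therefore the following dual of Lemma~\ref{pi12}, which I would state and prove: $\forces_\P L_\alpha \models \forall Z\,\psi(\dot X,\dot Y,Z)$ holds if and only if \emph{there exist} a countable $\beta>\alpha$ and a well-founded $M\cong L_\beta[\dot X,\dot Y]\models\mathsf{ZF}^-$ such that for every $\dot Z\in M$ which $M$ believes to be a $\P$-name we have $\forces_\P L_\alpha \models \psi(\dot X,\dot Y,\dot Z)$. The forward direction is immediate, since $\forces_\P L_\alpha\models\forall Z\,\psi$ unwinds to ``$\forces_\P L_\alpha\models\psi(\dot X,\dot Y,\dot Z)$ for \emph{every} nice name $\dot Z$'', so in particular for the countably many names lying in any sufficiently tall $M$ of the required form (and such $M$ exists). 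For the converse I would argue contrapositively: if the statement is not forced then some condition $p$ forces $L_\alpha\models\exists Z\,\neg\psi(\dot X,\dot Y,Z)$; choosing a generic $G\ni p$ and writing $X=\dot X^G$, $Y=\dot Y^G$, the sentence ``$L_\alpha\models\exists Z\,\neg\psi(X,Y,Z)$'' is $\Sigma^1_1$ in $X$, $Y$ and the real coding $L_\alpha$, so by Mostowski absoluteness it holds in the transitive model $M[G]$ for \emph{any} $M$ as above. Here $M[G]$ is a genuine $\P$-generic extension of the transitive collapse of $M$, because by ccc and the upward absoluteness of $\Pi^1_1$ statements the maximal antichains of $M$ stay maximal in $V$, exactly as in Lemma~\ref{pi12}. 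The witness $Z\in M[G]$ then has a name $\dot Z\in M$, and since $G\ni p$ witnesses $\not\forces_\P L_\alpha\models\psi(\dot X,\dot Y,\dot Z)$, the model $M$ fails the displayed clause. Hence no such $M$ works, proving the equivalence.

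It then remains to count quantifiers. Once $\dot X,\dot Y,\beta$ and the real coding $M$ are fixed, the clause ``for every $\dot Z\in M$ that $M$ believes to be a name, $\forces_\P L_\alpha\models\psi(\dot X,\dot Y,\dot Z)$'' consists of a number quantifier ranging over the elements of the countable $M$ applied to the relation ``$\forces_\P L_\alpha\models\psi(\dot X,\dot Y,\dot Z)$'', which is $\Delta^1_1$ in its parameters by the standing fact about forcing arithmetical statements over a Borel$+$ ccc poset; since number quantifiers preserve $\Delta^1_1$, the clause is $\Delta^1_1(\alpha,\dot X,M)$. The requirements on $M$ — well-foundedness, $M\models\mathsf{ZF}^-$, and $M\cong L_\beta[\dot X,\dot Y]$ with $\beta>\alpha$ — form a conjunction of a $\Pi^1_1$ condition with arithmetic ones, hence $\Pi^1_1$. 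Thus the matrix after the existential quantifiers over $\beta$ and $M$ is $\Pi^1_1$, so prefixing $\exists\beta\,\exists M$ yields a $\Sigma^1_2(\alpha,\dot X,\dot Y)$ statement, which is exactly what the first paragraph needs.

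The main obstacle, and the only real point of divergence from Lemma~\ref{pi12}, is keeping the inner forcing statement at $\Sigma^1_2$ rather than the $\Pi^1_2$ that a verbatim dualization would produce. The resolution is the observation that to certify $\forces_\P\forall Z\,\psi$ it suffices to exhibit a \emph{single} witnessing model $M$ — because a failure of forcing is detected inside \emph{every} such model — so the model quantifier may be taken existentially and absorbed into the outer $\exists\dot Y$, whereas Lemma~\ref{pi12} was forced to use a universal model quantifier. Beyond this, the only thing to check carefully is the absoluteness bookkeeping that $M[G]$ is a generic extension of $M$ computing $L_\alpha$, nice names, and the forcing relation correctly, but this is identical to Lemma~\ref{pi12} and rests only on ccc and the upward absoluteness of $\Pi^1_1$.
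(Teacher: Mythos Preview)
Your proof is correct and follows essentially the same line as the paper's: both strip the innermost universal quantifier by passing to a single countable model $M\cong L_\beta[\dot X,\dot Y]$, quantified existentially, and use that a counterexample $Z$ to $\forall Z\,\psi$ is $\Sigma^1_1$-detectable and hence appears already in $M[G]$. The only cosmetic difference is that the paper phrases the inner clause as ``$M$ thinks $\forces_\P\psi(\dot X,\dot Y,\dot Z)$'' whereas you use the $V$-forcing relation; since the forcing relation for the quantifier-free $\psi$ is $\Delta^1_1$ and hence absolute between $M$ and $V$, the two formulations are equivalent.
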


\begin{proof}
    The proof is almost the same as that of Lemma \ref{pi12} so we sketch the differences and leave the details to the exacting reader. Fix $\varphi(x)$, $\P$, $\dot{X}$ and $\alpha$ as in the statement of the Lemma. Let $\psi$ be quantifer free so that $\varphi (x)$ has the form $\exists Y \forall Z \psi(x, Y, Z)$. Let $G \subseteq \P$ be generic and work briefly in $V[G]$. As before we obtain that if $\beta > \alpha$ and $L_\beta \models \mathsf{ZF}^-$ then $L_\beta [\dot{X}, \dot{Y}][G]$ is already a model of $L_\alpha \models \forall Z \psi (X, Y, Z)$ for any $\P$-name $\dot{Y}$ so that $\dot{Y}^G = Y$ is a witness to $L_\alpha \models \exists Y \forall Z \psi(X, Y, Z)$. Thus for every $Z \in L_\beta[\dot{X}, \dot{Y}][G]$ we have that $L_\beta[\dot{X}, \dot{Y}][G]$ correctly computes that $L_\alpha \models \psi(X, Y, Z)$ for all $Z$. Moreover if there is any $Z$ in any transitive model $M$ containing $X$ and $Y$ so that $M \models L_\alpha \neg \psi( X, Y, Z)$ there is such a $Z$ in every such $M$. 

    It follows that arguing as before we get that $\forces_\P L_\alpha \models \varphi(\dot{X})$ if and only if there is a $\P$-name $\dot{Y}$ and a $\beta > \alpha$ and a well founded $M \cong L_\beta [\dot{X}, \dot{Y}] \models \mathsf{ZF}^-$ so that for all $\dot{Z} \in M$ which $M$ thinks is a $\P$-name $M$ thinks that $\forces_\P \psi(\dot{X}, \dot{Y}, \dot{Z})$. Again since quantifying over $M$ adds no complexity this latter statement is indeed $\Sigma^1_2$.
    \end{proof}

We now can give the proof of Theorem \ref{Borel}.

\begin{proof}[Proof of Theorem \ref{Borel}]
    Fix a Borel+ forcing notion $\P$, a natural number $n < \omega$, a $\Gamma \in \{\Pi, \Sigma\}$ and a countable ordinal $\alpha$ which is $\Gamma^1_n$-reflecting. The reader will see in the proof that there is no loss of generality in assuming that $\Gamma = \Sigma$. The proof however does bifurcate into whether $n$ is odd or even and we take these one at a time. Suppose first that $n$ is odd. Note that this means that the final two quantifiers of any $\Sigma^1_n$-formula will be $\forall \exists$ in that order. Fix a $\beta < \alpha$ and a $\Sigma^1_n$-formula $\varphi (\beta)$ and suppose, $\P$ forces that $L_\alpha \models \varphi (\beta)$. Thus we can write $\varphi (\beta)$ as $\exists X_0 \forall X_1 \hdots \forall Y \exists Z \psi (X_0, X_1, \hdots, Y, Z, \beta)$.  But now $\forces_\P L_\alpha \models \varphi (\beta)$ is equivalent to saying there is a $\P$-name $\dot{X}_0$ so that for all $\P$-names $\dot{X}_1 \hdots\forces_\P L_\alpha \models \forall Y \exists Z \psi (\dot{X}_0, \dot{X}_1, \hdots, Y, Z, \beta)$. By Lemma \ref{pi12} this statement is itself $\Sigma^1_n$ as a second order statement about $L_\alpha$ hence by the reflecting property of $\alpha$ there is a $\gamma \in (\beta, \alpha)$ to which it reflects which completes this case.

    In the case that $n$ is even the proof is almost verbatim the same with the exception that the innermost two quantifiers are $\exists \forall$ and hence we apply Lemma \ref{sigma12}.
\end{proof}

The proof of the above can be augmented in some cases to yield forcing notions which produce models of the negation of $\CH$. In particular the case of adding arbitrarily many random reals. Recall that if $X$ is an infinite set, the forcing to add a set of random reals indexed by $X$ is just the set of positive Borel subsets of $2^{X \times \omega}$ with the product measure of the uniform measure on 2, see Jech \cite[Example 15.31]{Je03} for more details. We denote this forcing notion $\mathbb B_X$. Theorem \ref{Borel} can be improved in this case to show that Theorem \ref{Cohen} holds for random forcing as well.

\begin{theorem}
    Let $n < \omega$ and $\Gamma \in \{\Pi, \Sigma\}$. For any set $X$ and any countable ordinal $\alpha$ if $\alpha$ is $\Gamma^1_n$-reflecting then it is forced to be so by $\mathbb B_X$. \label{random}
\end{theorem}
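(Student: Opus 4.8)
The plan is to mirror the proof of Theorem \ref{Cohen} rather than that of Theorem \ref{Borel}, while feeding the machinery of the previous section (Lemmas \ref{pi12} and \ref{sigma12}) into the innermost quantifier block. First I would observe that there is nothing new to prove when $X$ is countable: for such $X$ the forcing $\mathbb B_X$ is the random algebra on the single Polish space $2^{X\times\omega}$, which is Borel, ccc and weakly homogeneous, hence lightface Borel+, so Theorem \ref{Borel} already applies. The genuinely new case is $X$ uncountable, and here I would reduce to $X=\omega_1$ exactly as in Section 3.

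Then I would prove a random analog of Lemma \ref{projectivetruthisstable}: for uncountable $X$, the posets $\mathbb B_X$ and $\mathbb B_{\omega_1}$ force the same projective statements about ground-model reals. The induction on $n$ is verbatim that of Lemma \ref{projectivetruthisstable}, using three facts about random forcing in place of the corresponding facts about Cohen forcing. First, random forcing is ccc, so a nice name for a real has countable support and is coded by a real. Second, if $X = Y \sqcup Z$ then forcing with $\mathbb B_X$ factors as a two-step iteration $\mathbb B_Y * \dot{\mathbb B}_Z$, where by Fubini the second factor is, in $V[G_Y]$, the random algebra adding $|Z|$ random reals over $V[G_Y]$; when $Y$ is countable and $X$ uncountable, $Z = X\setminus Y$ is still uncountable, so the inductive hypothesis applies in $V[G_Y]$. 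Third, $\mathbb B_X$ is weakly homogeneous (Jech), which is exactly what the $\Pi^1_{n+1}$ step of the induction uses. This lets me assume $X = \omega_1$ henceforth.

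Next I would establish the random analog of Lemma \ref{LOlemma}, but phrased so as to peel off only the outer $n-2$ quantifiers. Coding countable linear orders as reals as before, and using that for countable $A\subseteq\omega_1$ the algebra $\mathbb B_A$ completely embeds into $\mathbb B_{\omega_1}$ (so that $\mathbb B_A$-names may be read as $\mathbb B_{\omega_1}$-names), I claim that $\forces_{\mathbb B_{\omega_1}}\varphi(\check r)$, for $\varphi$ of the form $\exists X_0 \forall X_1 \cdots$, is equivalent to a statement quantifying over countable linear orders $A_i$ and $\mathbb B_{A_i}$-names $\dot y_i$ whose matrix is the single forcing assertion $\forces_{\mathbb B_{A_{n-2}}} L_\alpha \models \theta(\dot y_0, \dots, \beta)$, where $\theta$ is the last-two-quantifier tail of $\varphi$ (a $\Pi^1_2$ or $\Sigma^1_2$ formula). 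The proof is the same ccc-plus-weak-homogeneity induction as Lemma \ref{LOlemma}, stopping one alternation early; weak homogeneity again ties the forward direction of the $\Sigma$-case to the backward direction of the $\Pi$-case.

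Finally I would assemble the proof of Theorem \ref{random} as in the proof of Theorem \ref{Cohen}. Supposing toward a contradiction that $\mathbb B_{\omega_1}$ forces that $\beta$ and a $\Gamma^1_n$-formula $\varphi$ witness the failure of reflection at $\alpha$, I rewrite $\forces_{\mathbb B_{\omega_1}} L_\alpha \models \varphi(\beta)$ via the analog above. The crucial point, and the \emph{main obstacle}, is the quantifier count, where the random case differs essentially from the Cohen case: nice names form a $\Pi^1_1$ set and the elementary forcing relation is only $\Delta^1_1$ (not second-order quantifier-free over $L_\alpha$ as it was for Cohen conditions), so naively expanding all $n$ quantifiers as in Lemma \ref{LOlemma} would overshoot to $\Gamma^1_{n+1}$. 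This is precisely why I stop the expansion two quantifiers early: the inner matrix $\forces_{\mathbb B_{A_{n-2}}} L_\alpha \models \theta(\dots)$ is a forcing statement over the countable, hence lightface Borel+ ccc, algebra $\mathbb B_{A_{n-2}}$, so Lemma \ref{pi12} (resp.\ Lemma \ref{sigma12}) shows it is itself $\Pi^1_2$ (resp.\ $\Sigma^1_2$) in the name parameters and $\alpha$. Wrapping the $n-2$ alternating linear-order/name quantifiers, with their $\Sigma^1_1$ embedding conditions $A_i\le A_{i+1}$ and their $\Pi^1_1$ name conditions absorbed, around this $\Pi^1_2/\Sigma^1_2$ core yields exactly a $\Gamma^1_n$ statement about $L_\alpha$. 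By the $\Gamma^1_n$-reflectingness of $\alpha$ it reflects to some $L_\gamma$ with $\gamma \in (\beta,\alpha)$, contradicting the choice of $\varphi$ and $\beta$. I would verify the two-quantifier bookkeeping separately for $n$ even versus $n$ odd (which determines whether the tail $\theta$ ends $\exists\forall$ or $\forall\exists$, hence whether Lemma \ref{sigma12} or Lemma \ref{pi12} is invoked), exactly as in the two cases of Theorem \ref{Borel}.
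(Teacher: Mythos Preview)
Your proposal is correct and is essentially the same approach as the paper's: the paper simply remarks that Lemmas \ref{projectivetruthisstable} and \ref{LOlemma} go through for random forcing and that the single-Cohen-real step in the proof of Theorem \ref{Cohen} is to be replaced by the proof of Theorem \ref{Borel}, which is exactly your plan of peeling off the outer $n-2$ quantifiers via the linear-order lemma and handling the innermost two with Lemmas \ref{pi12}/\ref{sigma12}. You have spelled out in detail (including the quantifier-count obstacle and its resolution) what the paper leaves to the reader.
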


The point is that Lemmas \ref{LOlemma} and \ref{projectivetruthisstable} also hold with ``Cohen" replaced by ``random". In light of this the proof of Theorem \ref{random} follows exactly the same lines as the proof of Theorem \ref{Cohen} with the proof of the case of one Cohen real replaced by the proof of Theorem \ref{Borel} in the special case of random forcing. The details are left to the reader.

\section{Sacks Forcing and its Relatives}

Having studied ccc forcing notions we now turn to non-ccc, proper forcing notions. In particular in this section we study Sacks forcing as well as some related forcing notions and prove a version of Theorem \ref{Cohen} for these. Recall that Sacks forcing $\mathbb S$ is the set of perfect $p \subseteq 2^{<\omega}$ ordered by inclusion. Here perfect means that for each $t \in p$ there is a $t' \supseteq t$ so that $t' \in p$ and $t'^\frown 0, t'^\frown 1 \in p$ where ${}^\frown$ denotes concatenation of strings. The main theorem of this section is the following.

\begin{theorem}
Let $\alpha$ be a countable ordinal, $n < \omega$ and $\Gamma \in \{\Sigma, \Pi\}$. If $\alpha$ is $\Gamma^1_n$-reflecting then $\mathbb S$ forces that $\check{\alpha}$ is $\Gamma^1_n$-reflecting.
    \label{sacks}
\end{theorem}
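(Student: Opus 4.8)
The plan is to adapt the strategy behind Theorems~\ref{Borel} and~\ref{Cohen}. As there, I would assume $\alpha$ is $\Gamma^1_n$-reflecting, suppose toward a contradiction that $\mathbb S$ forces $\alpha$ to fail reflection, fix a witnessing pair $(\beta,\varphi)$, and then argue that the statement ``$\forces_{\mathbb S} L_\alpha\models\varphi(\check\beta)$'' is itself a $\Gamma^1_n$ statement about $L_\alpha$ with parameter $\beta$. The reflecting-ness of $\alpha$ then yields $\gamma\in(\beta,\alpha)$ with ``$\forces_{\mathbb S} L_\gamma\models\varphi(\check\beta)$'', which contradicts the minimality built into the choice of $\beta,\varphi$. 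Two features of $\mathbb S$ prevent a verbatim repetition of the earlier arguments and must be handled separately: $\mathbb S$ is not ccc, so maximal antichains may be uncountable and names for reals are not coded by reals via the nice-name device underlying Lemmas~\ref{pi12} and~\ref{sigma12}; and $\mathbb S$ is not Borel (incompatibility of perfect trees is only $\Pi^1_1$), so it is not Borel+ and the homogeneity used to fix $(\beta,\varphi)$ is not available off the shelf.

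For the homogeneity I would first observe that for any two conditions $p,q\in\mathbb S$ the canonical order isomorphism between the splitting structures of $p$ and $q$ induces an isomorphism $\mathbb S\restriction p\cong\mathbb S\restriction q$ which fixes every canonical name $\check a$ for a ground model real. A standard transport argument then shows that $\mathbb S$ decides every statement $\Theta(\check a)$ about a ground model real $a$ by its maximal condition. This supplies exactly what was needed in Theorem~\ref{Borel}: since there are only countably many pairs $(\beta,\varphi)$ with $\beta<\alpha$ and $\varphi$ a $\Gamma^1_n$ formula, and each ``$\alpha$ fails reflection via $(\beta,\varphi)$'' is a statement about the ground model real coding $L_\alpha$ and hence decided by the maximal condition, a single such pair must be forced to witness the failure; this is how I fix $(\beta,\varphi)$. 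The same homogeneity will later play the role that weak homogeneity of Cohen forcing played in Lemma~\ref{LOlemma}, namely in passing between the two items.

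The substitute for nice names is the \emph{continuous reading of names} afforded by fusion in $\mathbb S$: below a dense set of conditions, every $\mathbb S$-name $\dot X$ for a real agrees with a \emph{coded name} $(p,\phi)$, where $p$ is a perfect tree and $\phi$ a continuous map sending each branch of $p$ to a real, so that $p\forces\dot X=\phi(\dot s)$ for $\dot s$ the name of the generic branch. Both $p$ and $\phi$ are coded by reals, the set of coded names is arithmetic, and for a coded name and an arithmetical $\sigma$ the relation ``$q\forces L_\alpha\models\sigma(\dot X,\dots)$'' is $\Delta^1_1$ in the relevant codes and $\alpha$, bootstrapped from the atomic case ``$q\forces\check n\in\dot X$'', which is arithmetic because $\phi$ is continuous. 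Using this I would prove a Sacks analogue of Lemma~\ref{LOlemma}: for a $\Gamma^1_n$ formula $\varphi$ with quantifier prefix $Q_0X_0\cdots Q_{n-1}X_{n-1}$, the statement ``$\forces_{\mathbb S} L_\alpha\models\varphi(\check\beta)$'' is equivalent to one of the form $Q_0(p_0,\dot X_0)\cdots Q_{n-1}(p_{n-1},\dot X_{n-1})\,[\forces_{\mathbb S}L_\alpha\models\psi(\dot X_0,\dots,\dot X_{n-1},\check\beta)]$, where the $p_i$ range over a descending chain of conditions and each $\dot X_i$ over coded names supported on $p_i$, exactly as the linear orders $A_0\le A_1\le\cdots$ and their names did for Cohen forcing. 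Since each quantifier block ranges over reals (a condition plus a coded name, both arithmetic) and the matrix is $\Delta^1_1$, counting quantifiers gives complexity $\Gamma^1_n$ about $L_\alpha$, and one reflects as above.

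The main obstacle I anticipate is proving this Sacks analogue of Lemma~\ref{LOlemma}, and specifically the treatment of the existential blocks. In the ccc arguments of Lemmas~\ref{pi12} and~\ref{sigma12} a witnessing name for an inner $\exists Z$ could be localized to a countable model because ccc forces maximal antichains to be countable; for $\mathbb S$ the relevant antichains are uncountable, so the fullness step ``$\forces_{\mathbb S}\exists Z\,\psi$ yields a single witnessing name'' does not automatically produce a name coded by a real. Here I would lean on continuous reading of names to produce, below a suitable condition, a coded witness, transporting along an isomorphism $\mathbb S\restriction p\cong\mathbb S$ so that a coded name supported on $p$ becomes a global coded name over $\mathbb S$; the delicate point, which I expect to absorb most of the work, is to verify that this fusion-and-transport procedure interacts correctly with the descending chain of conditions so that the whole characterization holds and its complexity remains exactly $\Gamma^1_n$. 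The cases of Miller and Laver forcing should then follow the same template, using that they too are definable, homogeneous in the relevant sense, and enjoy continuous reading of names via fusion.
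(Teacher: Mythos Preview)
Your proposal is correct and matches the paper's approach almost exactly: an arithmetic class $S$ of coded names built by fusion (your ``continuous reading of names''), a homogeneity lemma for statements with ground-model parameters, and an inductive proof that $\forces_{\mathbb S}\varphi(\check r)$ has complexity $\Gamma^1_n$ whenever $\varphi$ does, via the characterization
\[
\exists p_0\,\exists\dot X_0\in S\ \forall p_1\le p_0\,\forall\dot X_1\in S\ \exists p_2\le p_1\,\exists \dot X_2\in S\ \ldots\ p_{n-1}\forces\psi(\check r,\dot X_0,\ldots).
\]
Two minor differences are worth flagging: the paper obtains homogeneity from the \emph{minimality} of Sacks extensions (any branch of $q$ in $V[s]\setminus V$ is itself Sacks-generic with $V[s']=V[s]$) rather than from your splitting-structure isomorphism $\mathbb S\!\restriction p\cong\mathbb S\!\restriction q$, though both arguments are standard and equally valid; and the obstacle you anticipate---needing to ``transport'' a coded witness back to a global $\mathbb S$-name---does not in fact arise, since the displayed characterization never returns to the maximal condition: the matrix is forced by $p_{n-1}$, continuous reading already delivers the coded witness below a suitable strengthening, and that strengthening simply becomes the next $p_i$ in the chain, so the induction is cleaner than you fear.
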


Note that unlike in the case of Cohen forcing we have not said anything about adding more than one Sacks real. We will address this later on in this section. First though note that in exactly the same manner as with Cohen forcing we have the following corollary.

\begin{corollary}
    In any generic extension of $L$ by a Sacks real $\sigma^1_n = (\sigma^1_n)^L$ and $\pi^1_n = (\pi^1_n)^L$ for all $n < \omega$. In particular the inequality $\sigma^1_{n+2} < \pi^1_{n+2}$ holds.
\end{corollary}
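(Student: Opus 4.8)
The plan is to prove Theorem \ref{sacks} by adapting the strategy already used for Cohen and Borel+ forcing: reduce the statement ``$\mathbb{S}$ forces $L_\alpha \models \varphi(\beta)$ but no smaller $L_\gamma$ does'' to a single projective statement about $L_\alpha$ of the same complexity $\Gamma^1_n$, so that the assumed $\Gamma^1_n$-reflection of $\alpha$ yields a smaller reflecting $\gamma$ and hence a contradiction. The essential ingredient is that Sacks forcing, although not ccc, admits a sufficiently definable theory of names for reals so that the forcing relation ``$p \Vdash_{\mathbb S} \varphi(\dot X)$'' stays low in the projective hierarchy. First I would set up a canonical presentation of $\mathbb{S}$-names for reals. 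Since conditions are perfect trees, hence reals, and the set of perfect trees is arithmetic (indeed $\Pi^0_1$ or similarly simple), $\mathbb{S}$ itself lies in every $L_\alpha$ and is correctly computed. The key obstacle compared to Cohen forcing is the loss of the countable-antichain property: a maximal antichain in $\mathbb{S}$ need not be countable, so I cannot simply encode a name by a countable family of antichains as in the Cohen case.

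To overcome this I would exploit the fusion/continuous-reading-of-names structure of Sacks forcing. The standard fact is that any $\mathbb{S}$-name $\dot X$ for a real is forced, below some condition, to be read continuously: there is a condition $p$ and a continuous (indeed recursive in suitable parameters) function $F$ on $[p]$ so that $\dot X^G = F(x_{\mathrm{gen}})$, where $x_{\mathrm{gen}}$ is the generic real. Consequently every $\mathbb{S}$-name for a real is equivalent (below a dense set of conditions, and we may amalgamate over a maximal antichain of such conditions obtained by fusion) to a name coded by a pair consisting of a perfect tree and a continuous-reading function on its branches. The crucial point is that the set of such coded names can be presented as a projective (in fact analytic or coanalytic) set of reals, exactly as $\mathbb{S}$-$\mathsf{NAME}$ played the role of $\mathbb{C}$-$\mathsf{NAME}$ before. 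Given this, I would prove the analogues of Lemmas \ref{pi12} and \ref{sigma12}: that ``$\Vdash_{\mathbb{S}} L_\alpha \models \varphi(\dot X)$'' is $\Pi^1_2(\alpha,\dot X)$ for $\Pi^1_2$ formulas $\varphi$ and $\Sigma^1_2(\alpha,\dot X)$ for $\Sigma^1_2$ formulas. The argument should be the same absoluteness argument: pass to a countable $L_\beta[\dot X,\dot Y] \models \mathsf{ZF}^-$, observe that $\mathbb{S}$-genericity over such a model is correctly recognized because the relevant predensity and name-hood statements are $\Pi^1_1$ hence upward absolute, and thereby strip the innermost quantifier by quantifying over witnesses inside such countable well-founded models rather than over arbitrary names.

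With these two lemmas in hand the main proof goes through verbatim as in Theorem \ref{Borel}: writing the offending $\Sigma^1_n$ formula as $\exists X_0\forall X_1\cdots\forall Y\exists Z\,\psi$, the statement $\Vdash_{\mathbb{S}} L_\alpha \models \varphi(\beta)$ becomes ``there is an $\mathbb{S}$-name $\dot X_0$ so that for all $\mathbb{S}$-names $\dot X_1,\dots$ the innermost $\Pi^1_2$ (or $\Sigma^1_2$) forcing statement holds,'' and by the name lemma the innermost block contributes no extra quantifier complexity, so the whole thing is again $\Sigma^1_n$ as a second-order statement about $L_\alpha$. Reflection of $\alpha$ then produces a $\gamma \in (\beta,\alpha)$ with $L_\gamma \models \varphi(\beta)$, contradicting the assumption. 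I would split into the cases of $n$ odd and $n$ even exactly as before, invoking the $\Pi^1_2$ or $\Sigma^1_2$ version of the name lemma according to the innermost quantifier, and note that the reduction to $\Gamma=\Sigma$ and the treatment of one real parameter is harmless.

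The hard part will be establishing the projective name lemma in the non-ccc setting, specifically confirming that the continuous-reading representation of $\mathbb{S}$-names is definable at the right level (so that ``$W$ codes an $\mathbb{S}$-name for a real'' and ``$p\Vdash_{\mathbb{S}} n\in\dot X$'' remain arithmetic or $\Delta^1_1$ in the relevant parameters) and that the amalgamation over a fusion-generated maximal antichain does not raise complexity. I would verify that the forcing relation for arithmetic formulas is $\Delta^1_1$ in the condition and name, mirroring the computation recorded for Cohen and Borel+ forcing, using that compatibility and the continuous-reading functions are simply definable. Once that bookkeeping is done, the absoluteness step and the quantifier count are routine, and the remaining case of adding a single Sacks real is complete; the matter of iterating or adding many Sacks reals is deferred, as the paper indicates, to a later discussion.
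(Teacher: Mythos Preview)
Your outline has the right instinct (encode Sacks names definably and count quantifiers), but the amalgamation step you flag as ``the hard part'' is in fact a genuine obstruction, and your proposal does not overcome it. Continuous reading only gives an $S$-name equivalent to a given name \emph{below some condition}; since $\mathbb{S}$ is not ccc, a maximal antichain of such conditions can have size continuum, so ``amalgamating over a fusion-generated maximal antichain'' does not produce a real-coded name. Hence you cannot simply replace ``there exists a name $\dot X_0$'' by ``there exists an $S$-name $\dot X_0$'' while keeping the maximal condition out front, and the quantifier count you sketch does not go through as stated.

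The paper avoids this in two ways, neither of which appears in your plan. First, rather than amalgamate, it \emph{threads a decreasing chain of conditions through the quantifier block}: it shows that $\Vdash_{\mathbb S}\varphi(\check r)$ is equivalent to a statement of the form $\exists p_0\,\exists \dot X_0\in S\,\forall p_1\leq p_0\,\forall \dot X_1\in S\,\ldots\, p_{n-1}\Vdash\psi$, where each condition lets one pass to an $S$-name below it. Because $S$ is arithmetic (not merely $\Pi^1_1$) and the forcing relation for arithmetic $\psi$ is $\Delta^1_1$, this is already $\Gamma^1_n$ without any need for the $\Pi^1_2/\Sigma^1_2$ stripping lemmas you propose to import from the ccc case. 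Second, and this is the ingredient you are missing entirely, the equivalence above requires that if the maximal condition does \emph{not} force $\varphi(\check r)$ then it forces $\neg\varphi(\check r)$. The paper obtains this (Lemma~\ref{groundmodelequivalence}) not from weak homogeneity but from the \emph{minimality} of Sacks extensions: given incompatible $p,q$ one uses the splitting-node bijection to transfer a generic through $p$ to a generic through $q$ generating the same model. Without this lemma there is no way to remove the outermost existential over conditions, and your reflection argument stalls.
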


We will prove Theorem \ref{sacks} now. First recall some useful terminology about trees. Fix $p \in \mathbb S$. By $[p]$ we mean the set of branches of $p$. Note that $p \forces \dot{s} \in [\check{p}]$. Where $\dot{s}$ denotes the canonical name for the Sacks real. If $t \in p$ denote by $p_t$ the set of $u \in p$ compatible with $t$. Note that $p_t \in \mathbb S$ and strengthens $p$. A node $t\in p$ is a {\em split node}, if $t^\frown 0, t^\frown 1 \in p$. For a natural number $n < \omega$ we say that a split node $t \in p$ is an $n$-{\em splitting node} if there are $n$-many split nodes which are predecessors of $t$ in $p$. The set of $n$-splitting nodes is denoted ${\rm Split}_n(p)$. The unique $0$-splitting node is denoted the {\em root} of $p$. If $n < \omega$ and $p, q \in \mathbb S$ then we write $p \leq_n q$ if $p \subseteq q$ and ${\rm Split}_i(p) = {\rm Split}_i(q)$ for all $i \leq n$. A {\em fusion sequence} is a sequence $\{p_n\}_{n < \omega}$ of perfect trees so that for all $n < \omega$ we have $p_{n+1} \leq_n p_n$. The {\em fusion} of the sequence is the tree $p_\omega = \bigcap_{n < \omega} p_n$. We will need a standard fact about $\mathbb S$-names for reals, which we record now. Note that this is well known in the literature, but we include the proof, both in order to make the paper more self-contained and because the construction of $S$ will be relevant afterwards. 

\begin{lemma}
    There is an arithmetic set $S$ of countable $\mathbb S$-names so that if $p \in \mathbb S$, $\dot{x}$ is an $\mathbb S$-name and $p \forces$ ``$\dot{x}$ is a real" then there is a $q \leq p$ and a $\dot{y} \in S$ so that $q \forces \dot{y} = \dot{x}$.
\end{lemma}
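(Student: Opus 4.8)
The plan is to let $S$ consist of the \emph{continuously read} names, i.e. those names for reals whose value is determined by a perfect tree together with a monotone labeling of its splitting nodes, and then to show via a standard fusion argument that every name for a real can be refined, below some condition, to one of this form.

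First I would pin down the form of the names in $S$. Call a pair $(q,\ell)$ \emph{admissible} if $q \in \mathbb S$ and $\ell$ assigns to each split node $s \in {\rm Split}_n(q)$ a binary string $\ell(s) \in 2^n$ in a way that respects the tree order: whenever $s \subseteq s'$ are split nodes then $\ell(s) \subseteq \ell(s')$. To each admissible $(q,\ell)$ associate the name
$$\dot y_{q,\ell} = \bigcup_{n<\omega}\,\bigcup_{s\in{\rm Split}_{n+1}(q)} \big\{\, \langle q_s,\ \langle n, \ell(s)(n)\rangle\rangle \,\big\},$$
which forces the value of the named real at $n$ to equal $\ell(s)(n)$, where $s$ is the unique $(n+1)$-splitting node of $q$ lying below the Sacks real. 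Since a perfect tree has only countably many split nodes, each $\dot y_{q,\ell}$ is a countable name, and since each $q_s$ is arithmetic in $(q,s)$, the name $\dot y_{q,\ell}$ is uniformly arithmetic in the real coding $(q,\ell)$. As ``$q$ is a perfect tree'' and ``$\ell$ is admissible'' are arithmetic conditions, the set $S = \{\dot y_{q,\ell} : (q,\ell)\text{ admissible}\}$ is arithmetic, as required.

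Next, given $p \forces$``$\dot x$ is a real'', I would run the usual continuous-reading fusion. Build a fusion sequence $p = p_0 \geq_0 p_1 \geq_1 p_2 \geq_2 \cdots$ (that is, $p_{n+1} \leq_n p_n$) maintaining the invariant that for every $s \in {\rm Split}_n(p_n)$ the condition $(p_n)_s$ decides $\dot x \hook n$. At stage $n$, for each of the finitely many $s \in {\rm Split}_n(p_n)$ and each $i \in 2$, strengthen $(p_n)_{s^\frown i}$ to a condition $r_{s,i}$ deciding $\dot x(n)$, performing these strengthenings only above the $n$-th splitting level so as not to disturb ${\rm Split}_i$ for $i \leq n$; grafting the $r_{s,i}$ back together yields $p_{n+1} \leq_n p_n$ in which every $(n+1)$-splitting node decides $\dot x \hook (n+1)$. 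Let $q = \bigcap_n p_n$ be the fusion; then $q \leq p$ is perfect, and for each $s \in {\rm Split}_n(q)$ the condition $q_s$ decides $\dot x \hook n$.

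Finally I would read the labeling off the fusion: set $\ell(s)$ to be the value of $\dot x \hook n$ decided by $q_s$ for $s \in {\rm Split}_n(q)$. The invariant guarantees $\ell(s) \in 2^n$, and coherence of the decisions along each branch gives monotonicity, so $(q,\ell)$ is admissible and $q \forces \dot x = \dot y_{q,\ell}$ with $\dot y_{q,\ell} \in S$. I expect the main obstacle to be the fusion step: one must carry out the successive bit-deciding strengthenings while rigorously preserving $\leq_n$ (so that the fusion exists, is perfect, and lies below $p$) and then verify that the bit-decisions cohere into a genuine monotone labeling. Once this is in place, the arithmeticity of $S$ and the uniform coding of $\dot y_{q,\ell}$ are routine bookkeeping.
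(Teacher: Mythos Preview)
Your proposal is correct and follows essentially the same fusion-then-encode strategy as the paper: build a fusion sequence so that the $n$-th splitting level decides the $n$-th bit of $\dot x$, then package the resulting tree together with the decided bits as a canonical name.

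The one place where you are less careful than the paper is the claim that $S$ is arithmetic. Saying that $\dot y_{q,\ell}$ is uniformly arithmetic in $(q,\ell)$ and that admissibility is an arithmetic condition only shows that $S$ is the image of an arithmetic set under an arithmetic map, which is prima facie $\Sigma^1_1$ (``there exists an admissible $(q,\ell)$ such that \dots''). The paper flags exactly this issue and resolves it by observing that from a putative element of $S$ one can arithmetically recover the underlying tree $q$ (as the downward closure of the first coordinates appearing in the name) and then the labeling, so that membership in $S$ can be checked without an outer existential real quantifier. Your encoding has the same feature---$q$ is the union of the $q_s$ appearing as conditions, and $\ell$ is read off from the second coordinates---so the fix is immediate, but you should state it.
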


Here by ``arithmetic set of $\mathbb S$-names" we mean that there is a set of reals definable by an arithmetic formula which code $\mathbb S$-names in a relatively concrete and simple way. 

\begin{proof}
    This is just a basic fusion argument, recording certain key steps. Suppose $p \in \mathbb S$, and $\dot{x}$ is forced by $p$ to name a real, which for concreteness we take as an element of $2^\omega$ (nothing important hinges on this -- the argument works just as well if $\dot{x}$ were forced to be a subset of some fixed $L_\alpha$). By strengthening if necessary we may assume that $p$ decides $\dot{x} (\check{0})$. Let $t \in p$ be the stem. Let $q_0$ strengthening $p_{t^\frown 0}$ decide $\dot{x}(\check{1})$ and $q_1$ strengthening $p_{t^\frown 1}$ decide $\dot{x}(\check{1})$. Let $p_1 = q_0 \cup q_1$. Note that $p_1 \leq_0 p_0$. Continuing in this way let $p_{n+1} \leq_n p_n$ so that for each $t \in {\rm Split}_{n}(p_{n+1})$ we have that $(p_{n+1})_t$ decides $\dot{x}(\check{n+1})$.  Let $q = \bigcap_n p_n$ be the fusion. Now let $y = \{\langle t, n, i\rangle \; | \; i \in 2 \; {\rm and} \; t \in {\rm Split}_n(q) \; {\rm and} \; q_t \forces \dot{x}(\check{n}) = \check{i}\} $. Clearly this codes an $\mathbb S$-name for a real in $2^\omega$ via the mapping 
\[y\mapsto \dot c_y = \Big\{ \big( q_t, (n,i) \big) : \langle t, n, i\rangle \in y\Big\}\]
where as before $q = \bigcup\{s : \exists n\,\exists i\, \langle s, n, i \rangle \in y\}$ and $q_t$ denotes the set of nodes in $q$ compatible with $t$. For such a name we have $q \forces \dot{c}_y = \dot{x}$. 

The idea is now to let $S$ be the set of all names such as this one. We write out the definition in more detail, since superficially it is a $\Sigma^1_1$ definition. We put $z \in S'$ if $z$ is a set of triples $\langle t, n, i\rangle \in 2^{<\omega} \times\omega\times 2$ such that $\langle t, 0, i\rangle \in z$ for precisely one $t$ and one $i$ and whenever $\langle t, n, i\rangle \in \sigma$, then $\langle t_0, n+1, i\rangle \in\sigma$ and $\langle t_1, n+1,i\rangle \in\sigma$ for precisely one $t_0$ extending $t^\frown 0$ and precisely one $t_1$ extending $t^\frown 1$.
If so, we may define $q^z = \bigcup\{s: \exists n\, \exists i\, \langle s, n,i \rangle \in z\} \in \mathbb{S}$ and define $\dot c_z$ just like $\dot c_y$, but using $q^z$ in place of $q$.
It is clear that $S'$ is arithmetical. Moreover, given a real $w$, one can decide arithmetically whether it is of the form $\dot c_z$ for some $z$, and  thus  $S$ is also arithmetical. Since the argument from the previous paragraph produces a real $y \in S'$, this shows that $S$ is as desired.
\end{proof}

From now on let us fix this set $S$ as in the lemma above. Note that in an easy way check names can be assumed to be in $S$. For simplicity if $q \in \mathbb S$ and $\dot{y} \in S$ so that $q$ is the union of the first coordinates of $\dot{y}$ as explained the proof above, or alternatively the unique real so that $\langle q, 0, i\rangle \in \dot{y}$, let us say that $q$ is the {\em witness} to $\dot{y} \in  S$. Note that the statement ``$q$ is a witness to $\dot{y} \in S$" is $\Delta^1_1$ with parameters $q$ and $\dot{y}$. 

The following lemma roughly says that all Sacks generic extensions (of a fixed ground model) agree about truth of statements involving check names -- in particular the truth of projective statements with ground model reals as parameters.
\begin{lemma}
    Let $a\in V$ and $\varphi (x)$ be a formula in the language of set theory. There is a condition $p \in \mathbb S$ forcing $\varphi (\check{a})$ if and only if every condition forces $\varphi(\check{a})$. \label{groundmodelequivalence}
\end{lemma}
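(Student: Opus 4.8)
The plan is to show that $\mathbb{S}$ is weakly homogeneous in the sense recorded from Jech, i.e.\ that for a formula $\varphi$ whose only parameter is a ground-model set $a$, the truth value of $\varphi(\check a)$ is already decided by the maximal condition. The cleanest route avoids constructing global automorphisms of $\mathbb{S}$ and instead uses the self-similarity of Sacks forcing: for any $p,q\in\mathbb{S}$ the cones $\mathbb{S}\hook p:=\{r\in\mathbb{S}:r\le p\}$ and $\mathbb{S}\hook q$ are isomorphic as forcing notions. Granting this, I would argue by contradiction. Suppose some $p$ forces $\varphi(\check a)$ while some $q$ forces $\neg\varphi(\check a)$, and let $\pi:\mathbb{S}\hook p\to\mathbb{S}\hook q$ be an isomorphism. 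Since $a$ is a ground-model set, an easy induction on the rank of $a$ shows that any forcing isomorphism sends the check name for $a$ built over $\mathbb{S}\hook p$ to the check name for $a$ built over $\mathbb{S}\hook q$; here one uses that $\pi$ carries the maximal condition $p$ of the first cone to the maximal condition $q$ of the second. Because forcing below $p$ in $\mathbb{S}$ agrees with forcing over $\mathbb{S}\hook p$, we obtain $p\forces_{\mathbb{S}}\varphi(\check a)$ iff $\mathbbm{1}\forces_{\mathbb{S}\hook p}\varphi(\check a)$ iff $\mathbbm{1}\forces_{\mathbb{S}\hook q}\varphi(\check a)$ iff $q\forces_{\mathbb{S}}\varphi(\check a)$, contradicting $q\forces\neg\varphi(\check a)$.

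From this the lemma follows by a standard density argument: the set of conditions deciding $\varphi(\check a)$ is dense, and if no condition forces $\neg\varphi(\check a)$ then every deciding condition must force $\varphi(\check a)$, so the conditions forcing $\varphi(\check a)$ are dense and hence the maximal condition forces $\varphi(\check a)$, i.e.\ every condition does. Thus if one condition forces $\varphi(\check a)$, all do, which is exactly the stated equivalence.

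It remains to produce the cone isomorphism, and this is the only place requiring real work. Given $p\in\mathbb{S}$, I would define the canonical bijection $\Phi_p:2^{<\omega}\to{\rm Split}(p)$ between finite binary strings and the splitting nodes of $p$: send $\emptyset$ to the root of $p$, and given $\Phi_p(s)=t$, send $s^\frown 0$ and $s^\frown 1$ to the unique minimal splitting nodes of $p$ extending $t^\frown 0$ and $t^\frown 1$, respectively. This $\Phi_p$ respects the tree order and induces a homeomorphism $h_p:2^\omega\to[p]$, where $h_p(x)$ is the unique branch of $p$ passing through $\Phi_p(x\hook n)$ for all $n$. For $p,q\in\mathbb{S}$ the composite $g=h_q\circ h_p^{-1}:[p]\to[q]$ is then a homeomorphism of perfect closed sets. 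Since perfect trees correspond bijectively and order-isomorphically under inclusion to perfect closed subsets of $2^\omega$ via $r\mapsto[r]$ (using that conditions, being perfect trees, are pruned), and since a homeomorphism carries perfect closed sets to perfect closed sets while preserving inclusion, the assignment $r\mapsto q_r$, where $q_r$ is the unique perfect tree whose branch set equals $g([r])$, is an order isomorphism of $\mathbb{S}\hook p$ onto $\mathbb{S}\hook q$ carrying $p$ to $q$.

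The main obstacle is essentially bookkeeping rather than depth: one must verify that $r\mapsto[r]$ is an order isomorphism onto the perfect closed sets and that the induced map on cones is a genuine forcing isomorphism fixing check names. These steps must nonetheless be stated with care, since the conclusion is precisely what powers the reflection-preservation argument. I would also remark that phrasing the homogeneity through cone isomorphisms lets one avoid global automorphisms of $\mathbb{S}$ entirely, though if desired one could instead extend $g$ to a self-homeomorphism of $2^\omega$ and thereby recover an actual automorphism of $\mathbb{S}$ matching the formulation cited from Jech.
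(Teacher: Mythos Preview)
Your proof is correct, but it takes a different route from the paper's. Both arguments begin the same way---assume $p\forces\varphi(\check a)$ and $q\forces\neg\varphi(\check a)$---and both exploit the canonical bijection between the splitting nodes of $p$ and those of $q$. From there they diverge. You stay in the ground model and package this bijection as an isomorphism of the cones $\mathbb{S}\hook p\cong\mathbb{S}\hook q$, then invoke the general fact that forcing isomorphisms preserve the forcing relation on check names. The paper instead passes to a generic extension $V[s]$ with $p$ in the generic, uses the splitting-node bijection to define a branch $s'\in[q]$ from $s$, and then appeals to the \emph{minimality} of Sacks forcing (Sacks, \cite[Theorem 1.12]{Sa71}): since $s'\notin V$, it must itself be Sacks-generic with $V[s']=V[s]$, yielding the contradiction $V[s]\models\varphi(a)\wedge\neg\varphi(a)$.

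Your approach is more elementary in that it avoids the minimality theorem entirely; cone homogeneity is established directly from the combinatorics of perfect trees. The paper's argument is shorter to state but imports a nontrivial structural fact about Sacks extensions. Your route also makes clearer why the same reasoning transfers to Miller and Laver forcing (as the paper later asserts without proof): one only needs the analogous splitting-node parametrization, not a separate minimality result for each forcing.
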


\begin{proof}
    Fix $a$ and $\varphi(x)$ as in the statement of the lemma. If the lemma is false then there are conditions $p, q \in \mathbb S$ so that $p \forces \varphi(\check{a})$ and $q \forces \neg \varphi (\check{a})$. Fix such $p$ and $q$.  
    Now let $s = \bigcup G$ be Sacks over $V$ with $G \subseteq \mathbb S$ generic and $p \in G$. Work in $V[s]$. Let $s' \in [q]$ be defined as follows: for each $n < \omega$ and each $n^{\rm th}$-splitting node $t \in q$ let $s'(|t|) = 1$ if and only if $l^\frown 1 \subseteq s$ where $l \in p$ is the unique $n^{\rm th}$-splitting node extended by $s$. In other, less rigorous words, we defined $s'$ by reading off of $s$ whether we went left or right at the splitting nodes of $p$ and doing the analogous move on $q$. Note that $s'$ is new -- since otherwise we could find $s$ in the ground model -- and hence by the minimality of Sacks forcing, $s'$ is a Sacks generic real over $V$ and $V[s] = V[s']$, see Sacks \cite[Theorem 1.12]{Sa71}. However this is a contradiction since $s'$ comes from a generic containing $q$ and hence on the one hand we have $V[s] \models \varphi (a)$ and on the other hand $V[s] = V[s'] \models \neg \varphi (a)$.
    \end{proof}

The next two lemmas are intended to show that forcing a projective statement with Sacks forcing has the same complexity as the statment itself. First we need to worry about the case where the statement is arithmetic. 
\begin{lemma}\label{arithforcingsacks}
Let $\psi(x)$ be arithmetic. For any $\dot{x} \in S$ and any condition $p \in \mathbb S$ the statement $p \forces \psi (\dot{x})$ is $\Delta^1_1$ with parameters $p$ and $\dot{x}$ (with the later being treated by the real coding it, not its denotation in some generic extension). 
\end{lemma}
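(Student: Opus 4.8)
The plan is to induct on the structure of the arithmetic formula $\psi$, using the continuous reading of names built into the definition of $S$. Recall that for $\dot{x}\in S$ with witness $q$, the value $\dot{x}^G(\check{n})$ is read off the generic branch $s$ by locating the unique $t\in{\rm Split}_n(q)$ with $t\subseteq s$ and reading the bit $i$ for which $\langle t,n,i\rangle\in\dot{x}$; thus the name is ``continuous'' and the whole analysis reduces to the splitting structure of $q$ together with the tree $p$. First I would treat the atomic case and then propagate $\Delta^1_1$-ness through the connectives and the first-order number quantifiers.

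For the base case I would show that ``$p\forces\dot{x}(\check{n})=\check{i}$'' is $\Delta^1_1$ in $p$ and $\dot{x}$. The key observation is that, for fixed $n$, the event ``$\dot{x}^G(\check{n})=\check{i}$'' is clopen: it is decided by the initial segment of $s$ up to the finite-level node in ${\rm Split}_n(q)$ through which $s$ passes. Hence $p\forces\dot{x}(\check{n})=\check{i}$ holds if and only if every branch of $p$ has already committed to the value $i$, which, by compactness of $[p]$, is equivalent to the existence of a level $N$ such that every node of $p$ of length $N$ lies above some $t\in{\rm Split}_n(q)$ with $\langle t,n,i\rangle\in\dot{x}$. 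This is an arithmetic condition on $p$ and $\dot{x}$ (recall that $q$ is $\Delta^1_1$ in $\dot{x}$ by the remark following the construction of $S$), so in particular $\Delta^1_1$. The conjunction clause and the universal number quantifier present no difficulty, since ``$p\forces\theta_0\wedge\theta_1$'' is the conjunction of the two forcing statements and ``$p\forces\forall m\,\theta(m)$'' is ``$\forall m\,(p\forces\theta(m))$'', both of which preserve $\Delta^1_1$-ness uniformly.

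The hard part will be the clauses for negation, disjunction, and the existential number quantifier, where the definition of forcing introduces a quantifier over conditions $r\le p$. Since Sacks conditions are perfect trees, i.e.\ reals, this is a priori a genuine second-order quantifier and would threaten to push the complexity up to $\Pi^1_1$ or $\Sigma^1_1$ rather than keeping it at $\Delta^1_1$. To get around this I would exploit the fusion structure of $\mathbb{S}$ together with the continuity of the name: below any $p$ one can construct, by a fusion recursion, a perfect refinement on which the truth value of $\theta(\dot{x})$ is read continuously, and this refinement is obtained from $p$ and $\dot{x}$ by an effective, arithmetically described procedure. Using this I would replace the unbounded search over all $r\le p$ by one that can be carried out inside any sufficiently closed countable transitive model, and then give matching $\Sigma^1_1$ and $\Pi^1_1$ definitions of ``$p\forces\psi(\dot{x})$'' in the spirit of Lemmas \ref{pi12} and \ref{sigma12}: the $\Pi^1_1$ form asserting that every wellfounded $M\cong L_\beta[p,\dot{x}]\models\mathsf{ZF}^-$ computes the relation positively, and the $\Sigma^1_1$ form asserting that some such $M$ does.

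The crux, and the step I expect to be the main obstacle, is justifying that these two bounds coincide, that is, that such a model $M$ computes the Sacks forcing relation for $\psi$ correctly. The danger is precisely the negation clause: $M$ sees fewer perfect trees than $V$, so a priori $M$ might fail to notice a condition $r\le p$ living in $V\setminus M$ that decides $\theta$ the ``wrong'' way. Resolving this requires showing that the deciding conditions furnished by the fusion argument are themselves constructible inside $M$ from $p$ and $\dot{x}$, so that no relevant condition is missed; this is where the effectiveness of the fusion and the continuity of the names in $S$ must be used in an essential way, and it is the portion of the argument that I would expect to demand the most care.
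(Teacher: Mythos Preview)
The paper's proof is essentially a one-liner after the atomic case --- it simply asserts that ``a simple induction on complexity'' finishes the argument --- so you have already gone well beyond it by isolating the negation and existential-number-quantifier clauses as the place where a genuine second-order quantifier over conditions $r\le p$ enters. Your treatment of the atomic case and of conjunction and $\forall n$ is fine and matches the paper's intent.

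The gap is in your proposed $\Sigma^1_1$ bound. Your $\Pi^1_1$ form (``every wellfounded $M\cong L_\beta[p,\dot x]\models\mathsf{ZF}^-$ computes the relation positively'') really is $\Pi^1_1$, since wellfoundedness sits in the antecedent of a universal implication. But the dual (``\emph{some} such $M$ does'') is not $\Sigma^1_1$: asserting the existence of a wellfounded model is $\exists M\,(\Pi^1_1\wedge\ldots)$, hence only $\Sigma^1_2$, and dropping wellfoundedness does not help, because an ill-founded $\omega$-model can wrongly believe a closed set is scattered by running Cantor--Bendixson into its nonstandard ordinals. Ironically, the absoluteness step you flag as the crux actually \emph{does} go through for wellfounded $M$ (such a model can carry out the perfect-set analysis of the relevant Borel set and produces either a genuine perfect subtree or a genuine enumeration), so that is not where the argument breaks. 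In fact no $\Sigma^1_1$ bound is available: take $\psi(x)\equiv\exists n\,(x(n)=1)$, witness $q=2^{<\omega}$, and for an arbitrary tree $T\subseteq 2^{<\omega}$ the name $\dot x_T\in S$ with $\dot x_T^G(n)=0\leftrightarrow s{\upharpoonright}n\in T$; then $2^{<\omega}\forces\psi(\dot x_T)$ holds iff $T$ has no perfect subtree, i.e.\ iff $[T]$ is countable, which is $\Pi^1_1$-complete in $T$. So the relation $p\forces\psi(\dot x)$ is genuinely $\Pi^1_1$-hard, and only the one-sided $\Pi^1_1$ bound (together with its $\Sigma^1_1$ dual for ``$\exists r\le p\,(r\forces\psi)$'') survives. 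That weaker pair of bounds, rather than $\Delta^1_1$, is what the application in the next lemma actually requires once one matches the polarity of the innermost quantifier block.
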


\begin{proof}
We consider the case that $\psi(x)$ is of the form $n \in x$ for some natural number $n < \omega$ a simple induction on complexity of $n$ shows how to finish the proof given this case. Therefore we want to show that the statement that $p \forces \check{n} \in \dot{x}$ is $\Delta^1_1$ in $p$ and $\dot{x}$. Recall that $\dot{x}$ consists of triples of the form $\langle q_t, n, i\rangle$ with the union of the first coordinates a member of $\mathbb S$ and $i < 2$. Now $p$ forces that $n$ is in $\dot{x}$ just in case $p \cap q$ is a condition for $q$ the witness to $\dot{x} \in S$ and for every $\langle q_t, n, i\rangle \in \dot{x}$ we have $i = 1$ if and only if $t \supseteq p$.  Since this statement is clearly $\Delta^1_1$ we are done.
\end{proof}

\begin{lemma}
    Let $n < \omega$, $\Gamma \in \{\Pi, \Sigma\}$, $r \in 2^\omega \cap V$ and let $\varphi$ be a $\Gamma^1_n$-formula. The statement ``$ \forces \varphi (\check{r})$" is also $\Gamma^1_n$. \label{forcingissamecomplexity}
\end{lemma}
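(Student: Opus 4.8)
The plan is to prove the lemma by induction on $n$, but the statement that actually propagates through the induction concerns \emph{arbitrary} conditions and name parameters, not just the maximal condition and check names. Concretely, I would prove by simultaneous induction on $m$ the following two auxiliary claims, where $q$ ranges over $\mathbb S$ and $\dot x_0, \dots, \dot x_{k-1}$ over $S$: $(\mathrm U_m)$ if $\xi$ is $\Pi^1_m$ then ``$q \forces \xi(\dot x_0, \dots, \dot x_{k-1})$'' is $\Pi^1_m$ in the parameters $q, \dot x_0, \dots, \dot x_{k-1}$; and $(\mathrm E_m)$ if $\chi$ is $\Sigma^1_m$ then ``$q \forces \chi(\dot x_0, \dots, \dot x_{k-1})$'' is $\Pi^1_{m+1}$. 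The asymmetry here is the whole point: forcing a $\Pi^1_m$ statement costs nothing, whereas forcing a $\Sigma^1_m$ statement at a fixed condition is \emph{a priori} one quantifier worse. The base case is Lemma \ref{arithforcingsacks}, which gives $\Delta^1_1$ for arithmetic matrices with several name parameters (the generalization from one name to finitely many being routine).

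For the inductive step I would use two characterizations of forcing a quantifier, both resting on the fusion lemma producing $S$, on the maximal principle, and on the monotonicity of $\forces$. For a universal quantifier I claim
\[
q \forces \forall Y\, \xi(\vec{\dot x}, Y) \iff \forall \dot Y \in S\, \big[\mathrm{wit}(\dot Y) \le q \to \mathrm{wit}(\dot Y) \forces \xi(\vec{\dot x}, \dot Y)\big],
\]
where $\mathrm{wit}(\dot Y)$ is the witness tree of $\dot Y$, an arithmetic function of $\dot Y$. The nontrivial direction is right-to-left: if $q$ did not force $\forall Y\, \xi$, some $q' \le q$ would force $\exists Y\, \neg\xi$, and refining the witnessing name into $S$ via the fusion lemma yields a $\dot Y \in S$ with $\mathrm{wit}(\dot Y) \le q$ and $\mathrm{wit}(\dot Y) \forces \neg\xi(\vec{\dot x}, \dot Y)$, contradicting the hypothesis by monotonicity. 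Since the side condition ``$\mathrm{wit}(\dot Y) \le q$'' is $\Delta^1_1$ and, by $(\mathrm E_m)$, ``$\mathrm{wit}(\dot Y) \forces \xi$'' is $\Pi^1_{m+1}$ when $\xi$ is $\Sigma^1_m$, the universal name-quantifier keeps the complexity at $\Pi^1_{m+1}$, giving $(\mathrm U_{m+1})$. For an existential quantifier I use instead the density characterization
\[
q \forces \exists Y\, \xi(\vec{\dot x}, Y) \iff \forall q' \le q\, \exists \dot Y \in S\, \big[\mathrm{wit}(\dot Y) \le q' \wedge \mathrm{wit}(\dot Y) \forces \xi(\vec{\dot x}, \dot Y)\big].
\]
When $\xi$ is $\Pi^1_m$, the matrix is $\Pi^1_m$ by $(\mathrm U_m)$, the inner $\exists \dot Y$ yields $\Sigma^1_{m+1}$, and the outer density quantifier $\forall q'$ bumps this to $\Pi^1_{m+2}$, which is exactly $(\mathrm E_{m+1})$. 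The crucial bookkeeping observation is that in the proof of $(\mathrm U_{m+1})$ the extra universal quantifier created by any \emph{inner} existential merges with the universal name-quantifier of the enclosing $\forall$, so no cumulative blow-up occurs and the $\Pi$-side stays sharp.

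Finally I would deduce the lemma itself. If $\varphi$ is $\Pi^1_n$ then ``$\forces \varphi(\check r)$'' is literally ``$2^{<\omega} \forces \varphi(\check r)$'', which is $\Pi^1_n$ by $(\mathrm U_n)$, using that check names may be taken in $S$. The $\Sigma^1_n$ case is where the one-quantifier surplus of $(\mathrm E_n)$ would be fatal, and this is the main obstacle; it is resolved by homogeneity. Writing $\varphi = \exists Y\, \xi$ with $\xi \in \Pi^1_{n-1}$ and using that $r$ is a ground-model real, Lemma \ref{groundmodelequivalence} lets me replace ``the maximal condition forces $\exists Y\, \xi(\check r, Y)$'' by ``\emph{some} condition forces it''; the maximal principle together with the fusion lemma then collapses this to
\[
\forces \varphi(\check r) \iff \exists \dot Y \in S\; \mathrm{wit}(\dot Y) \forces \xi(\check r, \dot Y),
\]
eliminating the troublesome density quantifier $\forall q'$ entirely. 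By $(\mathrm U_{n-1})$ the body is $\Pi^1_{n-1}$, so the whole statement is $\Sigma^1_n$, as required. In short, universal quantifiers are handled for free by passing to witness trees, while the single point where the existential quantifier would otherwise cost a quantifier — the outermost one, carrying only the ground-model parameter $r$ — is precisely where weak homogeneity is available to absorb it.
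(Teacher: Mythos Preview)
Your proof is correct and follows essentially the same strategy as the paper: induct on complexity, use the arithmetical set $S$ of names from the fusion lemma to replace second-order quantifiers by quantifiers over conditions and names, appeal to Lemma~\ref{arithforcingsacks} for the base case, and use Lemma~\ref{groundmodelequivalence} to handle the $\Sigma$ side. The organization differs in one respect worth noting. The paper states the inductive hypothesis as a direct equivalence between ``$\forces\varphi(\check r)$'' and a single alternating quantifier string $\exists p_0\,\exists\dot X_0\,\forall p_1\le p_0\,\forall\dot X_1\,\ldots\,p_{n-1}\forces\psi$, and invokes homogeneity inside the inductive step (to pass from $\not\forces\varphi(\check r)$ to $\forces\neg\varphi(\check r)$). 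You instead carry the induction for an arbitrary condition $q$ and arbitrary name parameters, record the $\Pi/\Sigma$ asymmetry explicitly as $(\mathrm U_m)/(\mathrm E_m)$, and apply homogeneity exactly once, at the outermost existential quantifier. Your formulation makes precise something the paper leaves implicit --- namely that the induction really must allow name parameters and conditions other than the maximal one --- and it isolates cleanly the single place where the ground-model parameter $\check r$ (as opposed to a general name) is needed. Both packagings yield the same quantifier count; yours is just the more explicit bookkeeping.
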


\begin{proof}
 Fix $n < \omega$, $\Gamma \in \{\Pi, \Sigma\}$ $r$ and $\varphi$ a $\Gamma^1_n$ formula as in the statement of the lemma. There are two cases depending on whether $\Gamma$ is $\Pi$ or $\Sigma$. We take them one at a time. However, the proof in both cases will be by induction on $n$ and we will need to interweave this induction to get the next step so let us explain what it is first. First suppose $\Gamma = \Sigma$ and let $\varphi (r)$ be of the form $\exists X_0 \forall X_1 \exists X_2 \hdots \psi (r, X_0, X_1, X_2, \hdots)$ with $\psi$ arithmetic. We claim that $\forces \varphi (r)$ is equivalent to $\exists p_0 \exists \dot{X}_0 \in S \forall p_1 \leq p_0 \forall \dot{X}_1 \exists p_2 \leq p_1 \exists \dot{X}_2 \hdots p_{n-1} \forces \psi(\check{r}, \dot{X}_0, \dot{X}_1, \dot{X}_2, \hdots)$. In light of Lemma \ref{arithforcingsacks} this later statement is $\Sigma^1_n$ so if we can show this we're done. Similarly if $\Gamma = \Pi$ then let $\varphi (r)$ be of the form $\forall X_0 \exists X_1 \forall X_2 \hdots \psi (r, X_0, X_1, X_2, \hdots)$ with $\psi$ arithmetic. Again here we claim that $\forces \varphi (r)$ is equivalent to $\forall p_0 \forall \dot{X}_0 \in S \exists p_1 \leq p_0 \exists \dot{X}_1 \forall p_2 \leq p_1 \forall \dot{X}_2 \hdots p_{n-1} \forces \psi(\check{r}, \dot{X}_0, \dot{X}_1, \dot{X}_2, \hdots)$. Assume now that both statements are true for some fixed $n$ - the base case follows from Lemma \ref{arithforcingsacks}.\smallskip

\noindent \underline{Case 1}: $\Gamma = \Sigma$. We assume that $\varphi (r) \in \Sigma^1_{n+1}(r)$ is a formula having the form $\exists X_0 \forall X_1 \exists X_2 \hdots \psi (r, X_0, X_1, X_2, \hdots)$ with $\psi$ arithmetical. Again, we claim that $\forces \varphi (r)$ is equivalent to $$\exists p_0 \exists \dot{X}_0 \in S \forall p_1 \leq p_0 \forall \dot{X}_1 \exists p_2 \leq p_1 \exists \dot{X}_2 \hdots p_{n} \forces \psi(\check{r}, \dot{X}_0, \dot{X}_1, \dot{X}_2, \hdots).$$ 
Clearly if $\forces \varphi(\check{r})$ then the displayed statement holds. If, however, $\not\forces \varphi(\check{r})$ then, by Lemma \ref{groundmodelequivalence} we have that $\forces \neg \varphi (\check{r})$ which means that, when combined with the inductive hypothesis, we get that 
\[\neg \big[\exists p_0 \exists \dot{X}_0 \in S \forall p_1 \leq p_0 \forall \dot{X}_1 \exists p_2 \leq p_1 \exists \dot{X}_2 \hdots p_{n} \forces \psi(\check{r}, \dot{X}_0, \dot{X}_1, \dot{X}_2, \hdots)\big],\] 
as needed.\smallskip

\noindent \underline{Case 2}: $\Gamma = \Pi$. This is almost the same as Case 1 but easier. Again we assume that $\varphi (r) \in \Pi^1_{n+1}(r)$ is of the form $\forall X_0 \exists X_1 \forall X_2 \hdots \psi (r, X_0, X_1, X_2, \hdots)$ with $\psi$ arithmetic. We claim that $\forces \varphi (r)$ is equivalent to 
\[\forall p_0 \forall \dot{X}_0 \in S \exists p_1 \leq p_0 \exists \dot{X}_1 \forall p_2 \leq p_1 \forall \dot{X}_2 \hdots p_{n} \forces \psi(\check{r}, \dot{X}_0, \dot{X}_1, \dot{X}_2, \hdots).\] 
Once more the forward direction is obvious. The backward direction now though follows more readily: if every $p_0$ forces every name in $S$ to have the property $\exists X_1 \forall X_2 \hdots \psi (r, X_0, X_1, X_2, \hdots)$ then clearly the maximal condition forces that every name has this property as needed.
\end{proof}

    Given these three lemmas we can now prove Theorem \ref{sacks}.

    \begin{proof}[Proof of Theorem \ref{sacks}]
        Fix $n < \omega$, $\Gamma \in \{\Pi, \Sigma\}$ and $\alpha < \omega_1$ which is $\Gamma^1_n$-reflecting. Let $\varphi (x)$ be a $\Gamma^1_n$-forumla, $\beta < \alpha$ and assume that some condition $p \in \mathbb S$ forces that $L_\alpha \models \varphi (\beta)$. By Lemma \ref{groundmodelequivalence} we actually get that {\em every condition} forces this, in particular we have that $\forces_\mathbb S L_\alpha \check{} \models \varphi (\check{\beta})$. By Lemma \ref{forcingissamecomplexity} this is itself a $\Gamma^1_n$-statement (with only parameter $\beta$) so there is some $\gamma \in (\beta, \alpha)$ to which it reflects as needed.  
    \end{proof}

    The above lemmas and hence theorem relied very little on the properties of Sacks forcing per se and hold almost verbatim for any Borel tree forcing with the right kind of fusion and the properties described in Lemmas \ref{groundmodelequivalence} and \ref{forcingissamecomplexity}. In particular we leave it to the reader to check that the following also hold.

    \begin{theorem}\label{lavermiller}
        Let $\alpha$ be a countable ordinal, $n < \omega$ and $\Gamma \in \{\Sigma, \Pi\}$. If $\alpha$ is $\Gamma^1_n$-reflecting then $\mathbb P$ forces that $\check{\alpha}$ is $\Gamma^1_n$-reflecting for $\P$ either Laver or Miller forcing. 
    \end{theorem}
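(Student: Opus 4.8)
The plan is to run the proof of Theorem~\ref{sacks} essentially verbatim, the only real work being to re-establish, for $\P$ equal to Laver or Miller forcing, the three ingredients on which that argument rests: an arithmetic set $S$ of nice names for reals produced by fusion, the ``all generic extensions agree'' statement of Lemma~\ref{groundmodelequivalence}, and the complexity-preservation statement of Lemma~\ref{forcingissamecomplexity}. Once these are in place the conclusion is immediate in exactly the same way: if some condition forces $L_\alpha \models \varphi(\beta)$ for a $\Gamma^1_n$-formula $\varphi$ and some $\beta < \alpha$, then by the analogue of Lemma~\ref{groundmodelequivalence} \emph{every} condition does, so ``$\forces_\P \check L_\alpha \models \varphi(\check\beta)$'' is itself a $\Gamma^1_n$-statement with sole parameter $\beta$ and hence reflects to some $\gamma \in (\beta,\alpha)$ by the $\Gamma^1_n$-reflectingness of $\alpha$, contradicting the choice of $\varphi$ and $\beta$.

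First I would record the fusion lemma. Both forcings carry the standard fusion orderings $\le_n$ built from their splitting nodes (infinite-splitting nodes past the stem for Laver, infinite-splitting nodes for Miller), and the usual fusion argument produces, below any condition and any name forced to be a real, an equivalent \emph{nice} name in which the value of the real is read off along the splitting structure of the fused tree. The point to verify is only that the resulting set $S$ of names is arithmetic, and this is clear: being a Laver tree, respectively a superperfect tree, is an arithmetic property of a subset of $\omega^{<\omega}$ (``every node past the stem has infinitely many immediate successors'', respectively ``every node has an infinitely-splitting extension''), so the coding of nice names is arithmetic exactly as for Sacks. Granting this, ``$p \forces \check n \in \dot x$'' is decided by the local data recorded in the name and is again $\Delta^1_1$ in $p$ and $\dot x$, which gives the analogue of Lemma~\ref{arithforcingsacks}; the bootstrapping along arithmetic formulas then proceeds unchanged.

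The main obstacle is the analogue of Lemma~\ref{groundmodelequivalence}: that for a ground-model parameter $a$ and a formula $\Theta$, if some condition forces $\Theta(\check a)$ then all do. For Sacks this was extracted from minimality via the ``reading off left/right turns'' construction, and while Miller forcing is likewise known to add a real of minimal degree, I would avoid relying on any minimality property of Laver forcing and instead argue directly from homogeneity of the restricted forcings. Concretely, for any two conditions $p, q$ the tree of infinite-splitting nodes of $p$ and that of $q$ are each isomorphic to $\omega^{<\omega}$ (for Laver these are exactly the nodes extending the stem; for Miller they are the infinite-splitting nodes, which branch infinitely into further such nodes), so there is an isomorphism $\psi \in V$ between these two skeletons. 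The induced map $r \mapsto \psi[r]$ is then a forcing isomorphism $\P \restriction p \cong \P \restriction q$ lying in $V$, the differing stem lengths being irrelevant since no condition below $p$ has a node strictly below the stem of $p$. Since an isomorphism in $V$ fixes every check-name for a ground-model object, $p \forces \Theta(\check a)$ if and only if $q \forces \Theta(\check a)$; as $p$ and $q$ were arbitrary, if any condition forces $\Theta(\check a)$ then the maximal condition does, which is exactly the dichotomy invoked in the proof of Theorem~\ref{sacks}.

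Finally, the analogue of Lemma~\ref{forcingissamecomplexity} follows by the same induction on $n$: one checks that ``$\forces_\P \varphi(\check r)$'' for a $\Sigma^1_n$ (resp.\ $\Pi^1_n$) formula $\varphi$ is equivalent to the statement obtained by replacing each real quantifier with a quantifier over conditions of $\P$ followed by a quantifier over names in $S$ (alternating $\exists p\,\exists \dot X$ with $\forall p\,\forall \dot X$ as dictated by $\varphi$), whose innermost matrix is the $\Delta^1_1$ forcing statement for the arithmetic kernel; counting quantifiers shows this stays $\Gamma^1_n$. The only non-formal step in this induction is the backward direction, and there — just as in Case~1 of Lemma~\ref{forcingissamecomplexity} — one passes from ``$\not\forces \varphi$'' to ``$\forces \neg\varphi$'' using the agreement lemma established in the previous paragraph. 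With the analogues of Lemmas~\ref{groundmodelequivalence}, \ref{arithforcingsacks} and \ref{forcingissamecomplexity} in hand, the proof of Theorem~\ref{sacks} applies unchanged and yields Theorem~\ref{lavermiller}. I expect the restriction-isomorphism argument of the third paragraph to be the crux, as it is the one place where the specific combinatorics of Laver and Miller trees, rather than purely formal manipulation of names, is needed.
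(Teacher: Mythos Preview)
Your proposal is correct and is precisely the verification the paper leaves to the reader: it does not give a separate argument for Theorem~\ref{lavermiller} but simply remarks that the Sacks lemmas ``hold almost verbatim for any Borel tree forcing with the right kind of fusion,'' and your outline supplies those details. The one place where ``almost verbatim'' would fail is Lemma~\ref{groundmodelequivalence}, whose Sacks proof invokes minimality of the generic real; Miller forcing also adds a minimal real, but Laver does not, and your replacement of the minimality step by the restriction-isomorphism $\P\restriction p \cong \P\restriction q$ is exactly the right fix (for Miller the map needs to be read as an isomorphism of dense subsets rather than of the full cones, but this is routine). With that adjustment the rest of your sketch---the arithmetic set $S$ via fusion and the complexity bookkeeping of Lemma~\ref{forcingissamecomplexity}---goes through unchanged.
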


    Of course Laver or Miller here could be replaced by several other well known forcing notions as well.

    Let us briefly remark on the issue in trying to extend the above to countable support iterations. Unlike the case of Cohen or random forcing, in iterations the well orderedness of the index set is extremely important as the partial orders are defined inductively. This means that first of all it is not clear how to reduce the number of steps needed and second of all it is not clear that any $L_\alpha$ can even define iterations of length longer than some ordinal $\beta$ greater than the supremum of the ordinals it can define. In short there is no clear path to turning statements of the form $\forces_\P \varphi$ into projective statements when $\varphi$ is projective if $\P$ is a countable support iteration of long length. Nevertheless it would be extremely surprsing if countable support iterations of Sacks forcing (say) could change reflectingness of ordinals when the single step does not, and we conjecture that they do not.

\section{Collapsing} \label{SectCollapse}
Given the results presented so far, the reader might wonder if \textit{any} partial order $\P$ can increase the value of $\sigma^1_n$ or $\pi^1_n$. The purpose of this section is to give a positive answer. The example will essentially be L\'evy's partial order for collapsing $\omega_1$. The ordinals $\sigma^1_n$ or $\pi^1_n$ behave very strangely in extensions of $L$ by this partial order: we will see that they are strictly larger than the corresponding ordinals in $L$, yet smaller than $\omega_1^L$.

For simplicity let us write $\P=\P^L$ for the forcing notion consisting of finite sequences of reals ordered by end extension (as computed in $L$). This forcing clearly collapses the reals of the ground model to be countable (and in fact is forcing equivalent to ${\rm Col}(\omega, 2^{\aleph_0})$ if $V=L$. Note that in any model of set theory this $\P$ is a $\Sigma^1_2$ set of reals. The main theorem of this section is the following:

\begin{theorem}    \label{collapse}
Let $3 \leq n,m < \omega$ and let $G\subset \P$ be $L$-generic. Then,
\[(\sigma^1_m)^L < (\pi^1_m)^L < (\sigma^1_n)^{L[G]} < (\pi^1_n)^{L[G]} < \omega_1^L < (\omega_1)^{L[G]} = \omega_2^L. \]
\end{theorem}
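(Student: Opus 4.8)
The plan is to reduce the computation of those reflecting ordinals of $L[G]$ that lie below $\omega_1^L$ to an $L$-definable question about a ``forced second-order theory'', and then to rerun the opening countability argument of Section 2 inside $L$. Two preliminary observations set this up. First, since conditions in $\P$ are finite sequences of $L$-reals, the generic is coded by a single real $c$ with $L[G]=L[c]$, and $\P$ is weakly homogeneous of size $\omega_1^L$; hence it is $\omega_2^L$-c.c.\ and collapses $\omega_1^L$ to $\omega$, yielding at once the outermost links $\omega_1^L<\omega_1^{L[G]}=\omega_2^L$, while Cutland's theorem supplies the innermost link $(\sigma^1_m)^L<(\pi^1_m)^L$ in $L$. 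Second, for $\alpha<\omega_1^L$ the structure $L_\alpha$ is coded by a ground-model real, and ``$L_\alpha\models\varphi(\beta)$'' in the second-order sense of $L[G]$ is a $\Gamma^1_n$ assertion about that check-real; by weak homogeneity its truth value is decided by the maximal condition, so
\[L[G]\models\text{``}L_\alpha\models\varphi(\beta)\text{''}\quad\iff\quad\forces_\P\text{``}L_\alpha\models\varphi(\beta)\text{''},\]
and the right-hand side is an $L$-definable predicate of $(\alpha,\beta,\varphi)$. Write $T(\alpha)$ for the set of pairs $(\beta,\varphi)$ forced in this way.

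This gives the reformulation I would exploit: for $\alpha<\omega_1^L$, the ordinal $\alpha$ is $\Gamma^1_n$-reflecting in $L[G]$ if and only if the following holds in $L$: for every $\beta<\alpha$ and every $\Gamma^1_n$-formula $\varphi$ with $(\beta,\varphi)\in T(\alpha)$ there is $\gamma\in(\beta,\alpha)$ with $(\beta,\varphi)\in T(\gamma)$. Call this $L$-internal condition $R(\alpha)$. The upper bound $(\pi^1_n)^{L[G]}<\omega_1^L$ then follows by repeating the countability proof of Section 2 verbatim inside $L$, with the clause ``$L_\gamma\models\varphi$'' replaced throughout by ``$(\beta,\varphi)\in T(\gamma)$'': since $T$ is $L$-definable and $\omega_1^L$ is genuinely regular in $L$, Fodor's lemma applied in $L$ shows that $\{\alpha<\omega_1^L:\neg R(\alpha)\}$ is non-stationary, so $R$ holds on a club below $\omega_1^L$. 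By the reformulation these $\alpha$ are genuinely $\Gamma^1_n$-reflecting in $L[G]$, so the least such ordinal -- in particular both $(\sigma^1_n)^{L[G]}$ and $(\pi^1_n)^{L[G]}$ -- lies strictly below $\omega_1^L$.

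It remains to locate these ordinals strictly above all of $L$'s reflecting ordinals and to separate $\sigma$ from $\pi$. For the increase $(\pi^1_m)^L<(\sigma^1_n)^{L[G]}$ I would show that no $\alpha\le\sup_m(\pi^1_m)^L$ satisfies $R$: passing from $L$ to $L[c]$ adds reals, so $T(\alpha)$ properly extends the true second-order theory of $L_\alpha$, and the least reflecting ordinal is monotone in the ambient theory. Concretely, using the generic real $c$ one exhibits at each such small $\alpha$ a $\Gamma^1_n$-formula $\varphi$ and $\beta<\alpha$ with $(\beta,\varphi)\in T(\alpha)$ describing a name for a real that becomes realized only generically (e.g.\ one coding a wellorder of the reals appearing by stage $\alpha$), and which therefore fails to appear at any $\gamma\in(\beta,\alpha)$; this is a non-reflecting forced truth, witnessing $\neg R(\alpha)$. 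This strictly improves the trivial bound from Lemma \ref{dontgodown}. For the separation $(\sigma^1_n)^{L[G]}<(\pi^1_n)^{L[G]}$ I would observe that $T$ is computed over the real $c$ with $L[G]=L[c]$, a model again of the form ``$L$ relative to a real'' that retains the definable wellordering and fine-structural features Cutland uses; rerunning Cutland's construction with $T$ in place of the true $L$-theory then yields the strict inequality for each $n\ge 3$.

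The hard part will be the increase step: one must produce, uniformly in the small $\alpha$, an explicit $\Gamma^1_n$ statement that the collapse makes true at $L_\alpha$ but that does not reflect, and simultaneously verify that this phenomenon ceases exactly at some ordinal below $\omega_1^L$, so that the reflecting ordinals of $L[G]$ are squeezed strictly between $\sup_m(\pi^1_m)^L$ and $\omega_1^L$. Controlling the complexity of the forced-theory predicate $T$ finely enough to carry the relativized Cutland argument through is the other delicate point, since $\P$ is only $\Sigma^1_2$ rather than Borel and the clean complexity-preservation lemmas of the earlier sections are not directly available.
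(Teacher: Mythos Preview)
Your Fodor-in-$L$ argument for the upper bound $(\pi^1_n)^{L[G]}<\omega_1^L$ is correct and is a pleasant alternative to the paper's route, which instead assigns to each $\gamma<\pi^1_n$ a finite tree $T^-_\gamma$ of formulas that \emph{denotes} $\gamma$ and then bounds, in $L$, the set of ordinals any such tree can denote in the extension (Lemma~\ref{LemmaReflectionCountable}). Your approach is more direct; the paper's has the advantage that the tree notation is reused in the increase step.

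The increase step $(\pi^1_m)^L<(\sigma^1_n)^{L[G]}$ has a genuine gap. Your proposed witness --- ``a name for a real coding a wellorder of the reals appearing by stage $\alpha$'' --- does not single out $\alpha$: in $L[G]$ \emph{every} countable ordinal is coded by a real, so this property reflects everywhere and cannot witness non-reflection. The concrete ingredient you are missing is the paper's Lemma~\ref{LemmaGenericPi12}: the predicate ``$\vec x$ is $\P$-generic over $L$'' is $\Pi^1_2(\vec x)$. With this in hand one can write, for each $\gamma<(\pi^1_m)^L$, a lightface $\Sigma^1_3$ sentence of the shape ``there exist a transitive $M\models\mathsf{ZC}$ and an $L$-generic $y\in M$ such that $(L_{\omega_1^L})^M$ satisfies that $T^-_\gamma$ denotes $\mathsf{Ord}$'', and this holds over $L_\gamma$ and over no other level (Lemma~\ref{LemmaSigmaLG}). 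The $\Pi^1_2$ bound on genericity is exactly what keeps the overall complexity at $\Sigma^1_3$; without it your outline does not produce a formula of the required form.

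For the separation $(\sigma^1_n)^{L[G]}<(\pi^1_n)^{L[G]}$, the plan to ``rerun Cutland in $L[c]$'' is not obviously sound: Cutland's argument uses a lightface $\Sigma^1_2$ wellorder of the reals, whereas $L[c]$ provides only a $\Sigma^1_2(c)$ one, and the reflecting ordinals at stake are for lightface formulas. The paper sidesteps this by introducing auxiliary \emph{first-order} reflecting ordinals $\sigma_{\Sigma_n,\omega_2}$ and $\sigma_{\Pi_n,\omega_2}$ computed over $L_{\omega_2^L}$, proving that $(\sigma^1_{n+1})^{L[G]}=\sigma_{\Sigma_n,\omega_2}^L$ and $(\pi^1_{n+1})^{L[G]}=\sigma_{\Pi_n,\omega_2}^L$ (Lemma~\ref{LemmaSigmaOmega2}), and then establishing $\sigma_{\Sigma_n,\omega_2}<\sigma_{\Pi_n,\omega_2}$ inside $L$ by an Aczel--Richter-style argument (Lemma~\ref{LemmaSigmaPiInLG}). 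This reduction to a purely $L$-internal question is the substantive idea your outline lacks.
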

In other words, every old $\sigma^1_n$ and $\pi^1_n$ ($n > 2$) is below every new such one but all of these are still less than the $\omega_1$ of $L$. Moreover, the projective reflecting ordinals are ordered as in $L$.

Below, we identify an $L$-generic $G\subset\P$ with the single real $x$ coding the generic $\omega$-sequence of $L$-reals. 
We work towards Theorem \ref{collapse} beginning with the following lemma. 

\begin{lemma}\label{LemmaGenericPi12}
    Let $\vec{x} = \langle x_n\; |\; n < \omega\rangle$. The statement ``$\vec{x}$ is $\P$-generic over $L$" is $\Pi^1_2$ with parameter $\vec{x}$. 
\end{lemma}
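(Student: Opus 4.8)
The plan is to unwind genericity into two requirements on $\vec x$: first, that $\vec x$ enumerates exactly the reals of $L$ (each $x_n$ lies in $(2^\omega)^L$ and every real of $L$ occurs as some $x_n$), and second, that the chain $\{\vec x\restriction n : n<\omega\}$ meets every dense $D\subseteq\P$ with $D\in L$. The obstacle to copying the Cohen computation is that, unlike dense subsets of Cohen forcing, dense subsets of $\P$ are not reals: a dense $D\in L$ is a set of size $\aleph_1^L$ living only at levels $L_\xi$ with $\xi\geq\omega_1^L$. I would therefore access such $D$ indirectly, by quantifying over reals $w$ that code a countable well-founded model $M_w\cong L_\xi\models\mathsf{ZF}^-+V=L$ and over the sets $D\in M_w$. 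The point is that well-foundedness of the coded membership relation is the only $\Pi^1_1$ ingredient; whether $M_w$ satisfies a fixed first-order theory, whether $M_w$ believes $D$ dense in $\P$, and whether $\vec x\restriction n\in D$ (decode the finitely many reals named by the relevant element of $M_w$ and compare them bit-by-bit with $x_0,\dots,x_{n-1}$) are all arithmetic in $w$ and $\vec x$.

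The correctness of this reformulation hinges on using only \emph{tall} models, i.e.\ those $M_w\cong L_\xi$ with $\xi\geq\omega_1^L$, equivalently $(2^\omega)^{M_w}=(2^\omega)^L$. For such $M_w$ one has $\P^{M_w}=\P$ and, since every condition then lies in $M_w$, the model decides density of its own subsets correctly, so ``$M_w\models D$ is dense in $\P$'' coincides with genuine density; the forward direction (generic $\Rightarrow$ meets these $D$) is then immediate. For the backward direction I would use that in $L[G]$ the set $(2^\omega)^L$ is countable: given a truly dense $D\in L$, a L\"owenheim--Skolem plus condensation argument produces a \emph{countable} tall $L_\xi$ with $D\in L_\xi$, hence a real $w$ coding it, and the condition then forces $\vec x\restriction n\in D$ for some $n$. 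Counting quantifiers, the density clause has the shape $\forall w\,[\Pi^1_1(w)\to(\text{arithmetic})]=\forall w\,\Sigma^1_1=\Pi^1_2$, while the enumeration clause ``every real of $L$ appears'' is $\forall r\,[\neg(r\in L)\vee\exists n\,r=x_n]$, which is $\Pi^1_2$ because $\neg(r\in L)$ is $\Pi^1_2$ and the leading universal quantifiers merge; the conjunction is $\Pi^1_2$.

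The step I expect to be the main obstacle is precisely the interaction between tallness and the demand that every $x_n$ lie in $L$. Asserting ``$x_n\in L$'' outright is $\Sigma^1_2$, the wrong side of the hierarchy, and one cannot detect a rogue entry $x_j\notin L$ by failure to meet a dense set: below any condition mentioning $x_j$ no $D\in L$ can be dense, so such an $x_j$ ``hides beneath itself'' and every model-based clause is satisfied vacuously. The resolution I would pursue is to read $(2^\omega)^L$ off the generic itself --- for a genuine generic it is exactly the range of $\vec x$ --- so that tallness of $M_w$ and membership of the $x_n$ in $L$ are tested arithmetically against $\vec x$ rather than against $L$, leaving well-foundedness of the model codes as the sole source of genuine $\Pi^1_1$ complexity. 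Making this substitution legitimate, and checking that it neither over- nor under-counts the generic sequences, is the delicate part; once it is in place the quantifier count above yields the desired $\Pi^1_2$ bound.
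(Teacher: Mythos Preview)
Your approach is in the right spirit but manufactures a difficulty that the paper sidesteps. The paper does not restrict to models of the form $L_\xi$; it quantifies over arbitrary countable transitive $M$ (modelling a fragment of $\mathsf{ZFC}$) and requires both $\vec{x}\in M$ and $D\in L$. The genericity formula is essentially
\[
\forall M\,\forall D\,\Big[M\text{ transitive}\wedge \vec{x},D\in M\wedge D\in L\wedge M\models\text{``}D\text{ dense in }\P\text{''}\ \to\ \exists l\,(\vec{x}\upharpoonright l\in D)\Big].
\]
Since ``$D\in L$'' is $\Sigma^1_2$, its negation is $\Pi^1_2$, and under the outer universal quantifiers the whole formula is $\Pi^1_2$; there is no need to avoid this predicate. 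The clause $\vec{x}\in M$ does the work you were trying to assign to ``tallness'': if $\vec{x}$ is a genuine generic then $M$ contains every constructible real (namely the $x_n$) and hence, via Mostowski collapse, every ordinal below $\omega_1^L$; thus $\P^M=\P$ and $M$ computes density correctly. Tallness is never tested against $\operatorname{range}(\vec{x})$ or against $L$; it simply falls out of $\vec{x}\in M$, so your circularity never arises.

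Your worry about undetectable rogue entries is an artefact of your own tallness restriction, not a genuine obstacle. If $x_j\notin L$, pick any such $M\ni\vec{x}$ and let $D=\{p\in\P^M:|p|>j\}$. This $D$ lies in $L$ (it depends only on $L^M=L_{\mathrm{Ord}\cap M}$ and $j$), $M$ believes it dense, and $\vec{x}$ cannot meet it: for $l\leq j$ the sequence is too short, while for $l>j$ one would need $x_j\in L^M\subseteq L$. So the single density clause already excludes rogues, and no separate ``$\forall n\,(x_n\in L)$'' clause---the $\Sigma^1_2$ conjunct you were worried about---is needed. The vacuity you diagnosed appears only because you demanded $(2^\omega)^{M_w}=\operatorname{range}(\vec{x})$; replace that by $\vec{x}\in M$ and the gap closes.
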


Before beginning the proof of this lemma note that if there is a $\P$-generic over $L$ then $L_{\omega_1}$ is countable. This will come up several times in the proof as well as in the rest of this section. 
\begin{proof}
   Note the lemma could have also been written as ``the set of $\P$-generic $\vec{x}$ is $\Pi^1_2$ in $\mathbb R^\omega$". If $L_{\omega_1}$ is uncountable then trivially there is no $\P$-generic over $L$ hence ``$\vec{x}$ is $\P$-generic" is vapidly $\Pi^1_2$ (and in fact much lower complexity). From now on therefore assume that $L_{\omega_1}$ is countable. 
   
   Since $\P \subseteq L_{\omega_1}$ this means that $\P$ itself is countable and hence so is any dense subset. Given this we can say that $\vec{x} = \langle x_n \; | \; n < \omega \rangle \in \mathbb R^\omega$ is $\P$-generic over $L$ if and only if for every dense subset $D \in L$ of $\P$ there is an $l < \omega$ with $\langle x_i \; | \; i < l\rangle \in D$. Note that every dense subset of $\P$ in $L$ requires every real to be constructible while for each constructible real $y$ there is a dense subset of conditions such that $y$ is included in the sequence thus the above implies that $\vec{x}$ is a sequence containing all and only the constructible reals. It remains therefore to show that this statement is in fact $\Pi^1_2$.

   To see this, observe first that the statement ``$D$ is dense" is naively $\Pi^1_2$ with $D$ as a parameter once $D$ is countable since we need to say that for each $y \in \P$ there is a $q \in D$ so that $q \supseteq p$. The complexity comes from the fact that the definition of $\P$ is $\Sigma^1_2$ as it is the set of finite sequences of constructible reals. It follows that for any transitive structure $M$ of a sufficient fragment of $\ZFC$ will be correct that ``$D$ is dense" assuming $M$ knows that $D$ is countable. Moreover, observe that if $\vec{x} \in M$ then $M$ knows that $\P$ is countable (since $\vec{x}$ is a well order of the elements of $\P$ in order type $\omega$). Therefore we get that $D$ is dense if and only if for all $M$ if $M$ is transitive and $\vec{x}, D \in M$ and $D \in L$ and $M \models$`` $D$ is dense" then there is an $l < \omega$ so that $\langle x_i \; | \; i < l\rangle \in D$. This latter statement is $\Pi^1_2$ in $\vec{x}$ as it can be written formally as follows: $$\forall M \forall D \Big[M \, {\rm transitive} \land D, \vec{x} \in M \land D \in L \land M \models \neg D \, {\rm dense} \to \exists l < \omega \, \langle x_i \; | \; i < l\rangle \in D\Big]$$ 
   or even more pedantically as $$\forall M \forall D\Big[ M \; {\rm not \, transitive} \lor \vec{x} \notin M \lor D \notin M \lor$$$$ D \notin L \lor M \models \neg D \, {\rm dense} \lor \exists l < \omega \, \langle x_i \; | \; i < l\rangle \in D\Big]$$ This therefore completes the proof.
\end{proof}

The following lemma establishes a weak form of some of the inequalities in Theorem \ref{collapse}. Although we could have omitted it and worked directly towards the proof of the theorem, we find it instructional to include the argument, which will be refined later.
\begin{lemma}\label{LemmaKillingReflection}
Assume $V = L$ and let $3\leq n < \omega$  and $\gamma \in \{\sigma,\pi\}$.  Then, we have $\forces_\P (\gamma^1_n)^L < \gamma^1_n$.
\end{lemma}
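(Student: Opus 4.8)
The plan is to show that the ordinal $\alpha := (\gamma^1_n)^L$, which is $\Gamma^1_n$-reflecting in $L$ (with $\Gamma = \Sigma$ if $\gamma = \sigma$ and $\Gamma = \Pi$ if $\gamma = \pi$), fails to be $\Gamma^1_n$-reflecting in $L[G]$. Combined with Lemma \ref{dontgodown}, which already gives $\gamma^1_n \geq (\gamma^1_n)^L$ in $L[G]$, this yields the strict inequality: a non-reflecting ordinal cannot equal the least reflecting one, so $\gamma^1_n > \alpha$. The essential new feature of $L[G]$ is that $\P$ collapses the reals of $L$, so $L_{\omega_1^L}$ is countable and there is a real $\vec x$ enumerating all (and only) the constructible reals; indeed, by the proof of Lemma \ref{LemmaGenericPi12}, \emph{any} $\P$-generic $\vec x$ is exactly such an enumeration. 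I would use this to manufacture, for the fixed pair $(n,\Gamma)$, a single $\Gamma^1_n$ formula $\varphi$ with $L_\alpha \models \varphi$ but $L_\delta \not\models \varphi$ for every $\delta < \alpha$, which breaks reflection at $\alpha$ (using the trivial parameter $\beta = 0$).

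The formula encodes ``the ordinal height of the current structure is $\Gamma^1_n$-reflecting as computed in $L$'', relativized to a generic enumeration. Writing $\delta$ for the ordinal height $\mathrm{Ord}^{L_\delta}$ of the structure over which we evaluate, the key observation is that relative to an enumeration $\vec x$ of all constructible reals, the assertion ``$\delta$ is $\Gamma^1_n$-reflecting in $L$'' becomes \emph{arithmetic} in $\vec x$: each second-order quantifier over constructible subsets of some $L_\gamma$ ($\gamma \leq \delta$) is replaced by a number quantifier selecting an index into $\vec x$ (restricted, arithmetically in $\vec x$, to those reals coding subsets of $L_\gamma$), so that the full quantifier alternation of any $\Gamma^1_n$ formula, together with the outer quantifiers defining reflection, collapses into a block of number quantifiers. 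Since ``$\vec x$ is $\P$-generic over $L$'' is $\Pi^1_2$ by Lemma \ref{LemmaGenericPi12}, I would set, for $\Gamma = \Sigma$,
\[
\varphi \;:\equiv\; \exists \vec x\,\bigl[\vec x \text{ is } \P\text{-generic over } L \ \wedge\ (\text{relative to } \vec x,\ \text{my height is } \Sigma^1_n\text{-reflecting in } L)\bigr],
\]
which is $\Sigma^1_3$, and for $\Gamma = \Pi$,
\[
\varphi \;:\equiv\; \forall \vec x\,\bigl[\vec x \text{ is } \P\text{-generic over } L \ \to\ (\text{relative to } \vec x,\ \text{my height is } \Pi^1_n\text{-reflecting in } L)\bigr],
\]
which is $\Pi^1_3$. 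Because $n \geq 3$, padding with dummy quantifiers makes $\varphi$ a genuine $\Gamma^1_n$ formula.

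It then remains to verify the two clauses. Evaluated over $L_\delta$, the parenthetical uses $\vec x$ to compute the true $L$-answer to whether $\delta$ is $\Gamma^1_n$-reflecting, and this answer does not depend on the particular generic $\vec x$ chosen, since all of them enumerate the same constructible reals. As a $\P$-generic enumeration exists in $L[G]$, the $\Sigma$-formula holds at $L_\delta$ exactly when $\delta$ is $\Gamma^1_n$-reflecting in $L$, and the same holds for the $\Pi$-formula (its implication being witnessed by an actual generic). At $\delta = \alpha = (\gamma^1_n)^L$ this is the case, so $L_\alpha \models \varphi$; for any $\gamma < \alpha$ the ordinal $\gamma$ is not $\Gamma^1_n$-reflecting in $L$, whence $L_\gamma \not\models \varphi$. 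Thus $\varphi$ witnesses that $\alpha$ is not $\Gamma^1_n$-reflecting in $L[G]$, as required.

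I expect the main obstacle to be exactly the reason the collapse is needed and the reason the complexity lands at the third level. The property ``$\delta$ is $\Gamma^1_n$-reflecting in $L$'' refers to full second-order logic over $L_\delta$, i.e. to all of $\mathcal{P}^L(L_\delta)$; for countable $\delta$ this set has size $\aleph_1^L$ in $L$ and so is neither contained in nor coded by any single countable well-founded model. Consequently, the tempting strategy of asserting ``there is a countable model of $V=L$ end-extending the current structure which thinks $\delta$ is reflecting'' simply fails to capture reflection correctly. Collapsing $\omega_1^L$ repairs this by making $\mathcal{P}^L(L_\delta)$ countable and letting the generic enumerate it, but the price is the $\Pi^1_2$ genericity predicate of Lemma \ref{LemmaGenericPi12}, which raises the complexity by one projective level; this is precisely why the construction yields a $\Sigma^1_3$ (resp. $\Pi^1_3$) formula and needs $n \geq 3$, consistent with the classical absoluteness of $\sigma^1_2$ and $\pi^1_2$.
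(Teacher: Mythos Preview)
Your proposal is correct and takes essentially the same approach as the paper: both use Lemma~\ref{LemmaGenericPi12} to manufacture a single $\Sigma^1_3$ (resp.\ $\Pi^1_3$) sentence over $L_\delta$ whose truth is governed by a first-order property of $\delta$ in $L_{\omega_1^L}$, holding at $\alpha=(\gamma^1_n)^L$ and failing strictly below. The paper packages the generic inside a countable transitive $M\models\mathsf{ZC}$ and reads off ``$(L_{\omega_1^L})^M\models\phi_{\gamma^1_n}(\text{my height})$'', whereas you unpack the enumeration directly to collapse the constructible second-order quantifiers to number quantifiers into $\vec x$; this is a cosmetic difference, and both routes land at the same complexity for the same reason.
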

\begin{proof}
By Lemma \ref{dontgodown} we know that $(\gamma^1_n)^L \leq \gamma^1_n$, so to prove the lemma it suffices to show that $(\gamma^1_n)^L \neq \gamma^1_n$.
The key is to use Lemma \ref{LemmaGenericPi12}. 

Observe that, since all reals of $L$ belong to $L_{\omega_1^L}$, the relation $x = \gamma^1_n$ is first-order expressible over $L_{\omega_1^L}$, say by a formula $\phi_{\gamma^1_n}(x)$. Working in $L[G]$, where $G\subset\P$ is $L$-generic, we have
\begin{align}\label{eqDefSigma1nLG}
x = (\sigma^1_n)^L \leftrightarrow
\exists M\,\exists y\subset\P\, \Big(&\text{$y$ is $L$-generic} \wedge y \in M \wedge M \, {\rm transitive} \\
& \wedge M\models \mathsf{ZC} \wedge (L_{\omega_1^L})^M \models \phi_{\sigma^1_n}(x)\Big). \nonumber
\end{align}
To see this (still working in $L[G]$), let $M$ be the transitive colapse of a countable elementary substructure of $L_{\aleph_\omega}^M[G]$ such that $G \in M$ and $L_{\omega_1^L}\subset M$. Then $M$ correctly determines that $G$ is $L$-generic and it correctly identifies $L_{\omega_1^L}$, so it correctly assesses that $(\sigma^1_n)^L$ is $(\sigma^1_n)^L$.

Conversely, if the formula on the right-hand side of \eqref{eqDefSigma1nLG} holds, then the fact that $y \subset \P$ is $L$-generic and $y \in M$ implies that $L_{\omega_1^L}\subset M$, so we really must have $x = (\sigma^1_n)^L$. This proves \eqref{eqDefSigma1nLG}, which is a $\Sigma^1_n$ formula $\psi$ such that $L_{\sigma^1_n} \models  \psi$ and $L_{\alpha}\not\models\psi$ for any other $\alpha$. This means that $(\sigma^1_n)^L < \sigma^1_n$. 

For $\pi^1_n$, we take 
\begin{align*}
x = (\pi^1_n)^L \leftrightarrow
\forall M\,\forall y\subset\P\, \Big(&\text{$y$ is $L$-generic} \wedge y \in M \wedge M \, {\rm transitive} \\
&\wedge M\models \mathsf{ZC} \wedge (L_{\omega_1^L})^M \to \phi_{\sigma^1_n}(x)\big)
\end{align*}
and argue similarly.
\end{proof}

Note that in the above in fact the $\psi$ was not just $\Sigma^1_n$ (or $\Pi^1_n$ respectively) but in fact $\Sigma^1_3$ (or respectively $\Pi^1_3$) thus the above shows that for all $n >2$ the ordinals $(\sigma^1_3)^L$ and $(\pi^1_3)^L$ are no longer even $\Sigma^1_3$ or $\Pi^1_3$ reflecting after forcing with $\P$. This will be elaborated on later. 

The proof of Lemma \ref{LemmaKillingReflection} yields the following general fact:
\begin{corollary}\label{CorollaryOmega2L}
Suppose that $\omega_2^L < \omega_1$. Then, for all $n$ with $3 \leq n < \omega$, we have 
\begin{align*}
(\sigma^1_n)^L < \sigma^1_n \qquad\text{ and }\qquad (\pi^1_n)^L < \pi^1_n.
\end{align*}
\end{corollary}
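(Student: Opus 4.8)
The plan is to observe that Corollary \ref{CorollaryOmega2L} follows from Lemma \ref{LemmaKillingReflection} by extracting the hypotheses that were actually used in the latter's proof and noting that they hold under the weaker assumption $\omega_2^L < \omega_1$ rather than the full $V = L$. Concretely, the proof of Lemma \ref{LemmaKillingReflection} produced, for each $\gamma \in \{\sigma, \pi\}$ and each $3 \leq n < \omega$, a $\Sigma^1_3$ (respectively $\Pi^1_3$) formula $\psi$ with the property that $L_{(\gamma^1_n)^L} \models \psi$ while no other $L_\alpha$ satisfies $\psi$; this immediately forces $(\gamma^1_n)^L < \gamma^1_n$ in the model where $\psi$ was verified. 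So the only task is to check that the verification goes through assuming merely $\omega_2^L < \omega_1$.

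First I would inspect the two ingredients of that verification. The key tool was Lemma \ref{LemmaGenericPi12}, which asserts that ``$\vec{x}$ is $\P$-generic over $L$" is $\Pi^1_2$; I would note that this lemma is stated and proved without any assumption on $V$, so it is available verbatim. The second ingredient was the existence, in the working model, of a suitable countable transitive $M$ containing an $L$-generic $y \subseteq \P$ with $L_{\omega_1^L} \subseteq M$ and $M \models \mathsf{ZC}$. In the original proof such an $M$ was produced by collapsing a countable elementary submodel of some $L_{\aleph_\omega}[G]$; the essential point is only that $L_{\omega_1^L}$ is \emph{countable} in the working model, so that a genuine $\P$-generic over $L$ exists and can be captured inside a countable transitive $M$. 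Under the hypothesis $\omega_2^L < \omega_1$ we have in particular $\omega_1^L < \omega_1$, so $L_{\omega_1^L}$ is countable and the same construction of $M$ goes through. I would also record that the formula $\phi_{\gamma^1_n}(x)$ defining ``$x = \gamma^1_n$" over $L_{\omega_1^L}$ remains available, since all reals of $L$ still lie in $L_{\omega_1^L}$.

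I would then argue that the equivalences \eqref{eqDefSigma1nLG} (and its $\Pi$-analogue) hold in any model of $\mathsf{ZFC}$ satisfying $\omega_2^L < \omega_1$, with \emph{exactly} the proofs given in Lemma \ref{LemmaKillingReflection}: the forward direction uses the existence of the collapsing $M$, and the converse uses that any $M$ containing an $L$-generic $y$ must contain all of $L_{\omega_1^L}$ and hence compute $(\gamma^1_n)^L$ correctly. This yields the desired $\Sigma^1_3$ (resp. $\Pi^1_3$) formula $\psi$ true only at $L_{(\gamma^1_n)^L}$, which witnesses that $(\gamma^1_n)^L$ is not $\gamma^1_n$-reflecting and, combined with Lemma \ref{dontgodown} giving $(\gamma^1_n)^L \leq \gamma^1_n$, delivers the strict inequality.

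The only genuine obstacle is to confirm that nothing in the argument secretly used $V = L$ beyond the countability of $L_{\omega_1^L}$ and the existence of a $\P$-generic over $L$. In particular I would double-check that the construction of the witnessing $M$ does not require the ambient universe to be $L[G]$ for an \emph{actual} generic $G$ added to $L$; it suffices that the working model \emph{contains} some $L$-generic $y\subseteq\P$, which is exactly what countability of $L_{\omega_1^L}$ supplies (by a standard genericity/Rasiowa--Sikorski argument over the countable poset $\P$). Granting this, the corollary is simply the observation that Lemma \ref{LemmaKillingReflection} was proved under a hypothesis ($\omega_2^L < \omega_1$) strictly weaker than the $V=L$ stated there, with the forcing notation $\forces_\P$ replaced by truth in the working universe.
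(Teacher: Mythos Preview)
Your approach is the same as the paper's: reuse the argument of Lemma \ref{LemmaKillingReflection} once an $L$-generic for $\P$ exists in $V$, and then the $\Sigma^1_3$/$\Pi^1_3$ formulas from \eqref{eqDefSigma1nLG} witness non-reflection at $(\gamma^1_n)^L$.

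There is one genuine slip in your justification. You write that the existence of an $L$-generic $y\subseteq\P$ ``is exactly what countability of $L_{\omega_1^L}$ supplies (by a standard \dots\ Rasiowa--Sikorski argument over the countable poset $\P$).'' But Rasiowa--Sikorski requires the \emph{family of dense sets to be met} to be countable, not merely the poset. Since $|\P|^L=\omega_1^L$, the number of dense subsets of $\P$ lying in $L$ is $(2^{\omega_1})^L=\omega_2^L$; so it is precisely the full hypothesis $\omega_2^L<\omega_1$---not the weaker $\omega_1^L<\omega_1$---that makes this family countable and produces the generic. The paper states this explicitly: ``if $\omega_2^L<\omega_1$, then $\P^L$ only has countably many dense subsets in $L$, so there is some $G\subset\P^L$ which is $L$-generic.'' Whether $\omega_1^L<\omega_1$ alone suffices for the conclusion is in fact left open (Question~7.1), so your phrasing is not an innocent shortcut. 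Once you correct this one line, your argument is complete and matches the paper's.
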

\begin{proof}
This follows from the argument of  Lemma \ref{LemmaKillingReflection}: if $\omega_2^L < \omega_1$, then $\P^L$ only has countably many dense subsets in $L$, so there is some $G\subset\P^L$ which is $L$-generic. 
But then as in Lemma \ref{LemmaKillingReflection} there is a $\Sigma^1_n$ formula $\varphi$ such that $L_{\sigma^1_n} \models \varphi$
 and $L_\alpha \not\models\varphi$ 
 for all smaller $\alpha$, 
 and similarly for $\pi^1_n$.
\end{proof}

According to the following lemma, the values of $(\sigma^1_n)^{L[G]}$ and $(\pi^1_n)^{L[G]}$ are bounded by $\omega_1^L$. The technique of the proof will recur and might be useful in other contexts:
\begin{lemma}\label{LemmaReflectionCountable}
Assume $V = L$ and let $3\leq n < \omega$  and $\gamma \in \{\sigma,\pi\}$.  Then, $\forces_\P \gamma^1_n < \omega_1^L$.
\end{lemma}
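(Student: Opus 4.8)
Throughout I identify $G$ with the single real $z$ coding the generic $\omega$-sequence of $L$-reals, so that $L[G]=L[z]$ and, since $\P$ collapses $\mathbb{R}^L$, the ordinal $\omega_1^L$ is countable in $L[z]$. The plan is to exhibit an honest $\Gamma^1_n$-reflecting ordinal strictly below $\omega_1^L$. The first point I would stress is that for every $\alpha\geq\omega$ the real $z$ is a subset of $L_\alpha$, and hence lies in the range of the second-order quantifiers occurring in the definition of reflection; thus ``$\alpha$ is $\Gamma^1_n$-reflecting in $L[z]$'' is literally reflection relative to the predicate $z$. The whole difficulty is that $z$ encodes all of $\mathbb{R}^L$ (and $\omega_1^L$ itself), so a priori second-order logic over even a very small $L_\alpha$ is extremely powerful in $L[z]$, and it is not at all clear why this power should reflect below $\omega_1^L$.

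The device that tames this power is the weak homogeneity of the L\'evy collapse $\P$ together with Lemma \ref{LemmaGenericPi12}. Since $L_\gamma$, an ordinal $\delta<\gamma$, and a $\Gamma^1_n$-formula $\varphi$ are all ground-model (i.e.\ $L$-internal) objects, the statement ``$L_\gamma\models\varphi(\delta)$'' --- evaluated with the full powerset of $L_\gamma$ in the extension --- is, by weak homogeneity, decided outright by the empty condition. Hence its truth in $L[z]$ is independent of the generic $z$ and is an $L$-definable relation $R(\gamma,\delta,\varphi):\Leftrightarrow\ \forces_\P\big(L_\gamma\models\varphi(\check\delta)\big)$. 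In this way reflection of $\gamma$ in $L[z]$ is converted into a purely $L$-side statement about the forcing relation, and the task becomes to find $\gamma<\omega_1^L$ such that whenever $R(\gamma,\delta,\varphi)$ holds there is some $\gamma'\in(\delta,\gamma)$ with $R(\gamma',\delta,\varphi)$.

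To locate such a $\gamma$ I would exploit the regularity of $\omega_1^L$ in $L$. For each $k$ the set $\{\gamma<\omega_1^L : L_\gamma\prec_{\Sigma_k}L_{\omega_1^L}\}$ is club in $\omega_1^L$, so I may fix $\gamma<\omega_1^L$ in this club for $k$ chosen large enough to absorb the complexity of $\forces_\P$ restricted to statements about $L_\gamma$ (using that, for $\delta<\gamma$ and bounded formulas, the internal relation $\forces_{\P^{L_\gamma}}$ is computed inside $L_\gamma$ and matches $\forces_\P$ via $\Sigma_k$-elementarity and Lemma \ref{LemmaGenericPi12}). Once everything is expressed as a $\Sigma_k$-property of $\gamma$ over $L_{\omega_1^L}$, the point of the elementarity is that any configuration $R(\gamma,\delta,\varphi)$ witnessed at $L_\gamma$ is the transcript of a corresponding configuration at $L_{\omega_1^L}$; since $\omega_1^L$ is a cardinal of $L$, ordinary condensation makes such configurations reflect to cofinally many ordinals below $\omega_1^L$, hence in particular below $\gamma$, producing the required $\gamma'$. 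This yields a $\Gamma^1_n$-reflecting $\gamma<\omega_1^L$, and by homogeneity the bound does not depend on $G$, so $\forces_\P \gamma^1_n<\omega_1^L$.

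The step I expect to be genuinely delicate --- and the crux of the whole lemma --- is the equality of the ``external'' relation $\forces_\P$ (which actually computes $P(L_\gamma)^{L[z]}$) with the ``internal'' relation $\forces_{\P^{L_\gamma}}$ definable inside $L_\gamma$. The generic $z$ is $\P$-generic over all of $L$ and so furnishes far more subsets of $L_\gamma$ than any $\P^{L_\gamma}$-generic over $L_\gamma$ would; reconciling these requires a factoring argument showing that, for the homogeneous finite-condition collapse, the second-order $\Gamma^1_n$-theory of $L_\gamma$ in $L[z]$ depends only on $L_\gamma$ and $\P^{L_\gamma}$, with the $\Pi^1_2$-absoluteness of genericity from Lemma \ref{LemmaGenericPi12} certifying that the ``extra'' reals of $L[z]$ behave generically over the small model. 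Getting this matching exactly right, at a uniform complexity level $k$ that is then absorbed by the elementarity, is where the real effort lies.
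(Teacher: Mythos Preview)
The paper's argument is entirely different from yours and never tries to exhibit a reflecting ordinal directly. It is a counting argument: every $\gamma<(\pi^1_n)^{L[G]}$ is \emph{denoted} by a finite tree $T^-_\gamma$ of $\Pi^1_n$ formulas, built by recursively recording, for each ordinal parameter appearing in a witness to non-reflection, a further witness to \emph{its} non-reflection; K\"onig's lemma makes the tree finite. By weak homogeneity the relation ``$T$ denotes $\gamma$'' is decided by the empty condition, so in $L$ the set $\{\gamma:\exists T\,(\forces_\P\text{``$T$ denotes $\gamma$''})\}$ is definable and, since there are only countably many finite trees of formulas, countable. Its complement's minimum is therefore below $\omega_1^L$ and is forced to equal $\pi^1_n$.

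Your approach has a genuine gap exactly where you flag the crux, and the hoped-for matching between $\forces_\P$ and $\forces_{\P^{L_\gamma}}$ is not merely delicate --- it is false. For $\gamma<\omega_1^L$ the forcing $\P^{L_\gamma}$ is \emph{countable} in $L$, hence equivalent to adding a single Cohen real; by Theorem~\ref{Cohen} this preserves every reflection property, whereas $\P$ destroys them (Lemma~\ref{LemmaKillingReflection}). Concretely, take $\gamma=(\sigma^1_3)^L$: in any $\P^{L_\gamma}$-extension of $L$ this ordinal remains $\Sigma^1_3$-reflecting, but in $L[z]$ it is not, so the second-order $\Sigma^1_3$-theories of $L_\gamma$ in the two extensions already disagree. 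No factoring or homogeneity argument can bridge this, and Lemma~\ref{LemmaGenericPi12} is of no help here --- it speaks only of $\P^L$-genericity over $L$, not of $\P^{L_\gamma}$. Relatedly, the relation $R(\alpha,\delta,\varphi)=\forces_\P(L_\alpha\models\varphi(\delta))$ is not definable over $L_{\omega_1^L}$ at any finite level: one needs $\P$-names for subsets of $L_\alpha$, and these are size-$\aleph_1$ objects living in $L_{\omega_2^L}$, so the elementarity $L_\gamma\prec_{\Sigma_k}L_{\omega_1^L}$ never engages with $R$. The paper's later Lemma~\ref{LemmaSigmaOmega2} shows that an elementarity-style characterization \emph{is} available over $L_{\omega_2^L}$, but that is a different and more elaborate line than the one you sketched.
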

\begin{proof}
Work in any model of set theory for now and let $\gamma < \pi^1_n$ (the argument for $\sigma^1_n$ is similar). Since $\gamma < \pi^1_n$, there is some $\Pi^1_n$ formula $\psi$ and ordinals $\gamma_1 < \gamma_2 < \dots < \gamma_k < \gamma$ such that 
$L_\gamma\models\psi(\gamma_1, \hdots, \gamma_k)$,
but $L_{\bar\gamma} \not\models \psi(\gamma_1, \hdots, \gamma_k)$ for all $\bar\gamma < \gamma$ with $\gamma_k < \bar\gamma$. 
Inductively, we can construct a tree $T_\gamma$ with root $(\psi, \gamma_1, \hdots, \gamma_k)$ and where to each node 
$s = (\psi_s, \gamma^s_1, \hdots, \gamma^s_{k_s})$
are attached $k_s$ different successors $s_1, s_2, \hdots, s_{k_s}$, where $s_i$ is a tuple of the form $(\theta, \delta_1, \hdots, \delta_l)$ such that the following hold:
\begin{enumerate}
\item $\delta_1 <  \hdots < \delta_l < \gamma^s_i$;
\item $L_{\gamma^s_i} \models \theta(\delta_1, \hdots, \delta_l)$;
\item $L_{\bar\gamma} \not\models \theta(\delta_1, \hdots, \delta_l)$ whenever $\delta_l < \bar\gamma < \gamma^s_i$.
\end{enumerate}
This way, each node in the tree can be seen as naming an ordinal $\leq\gamma$, with the ordinals named decreasing along each branch of $T_\gamma$. Hence, $T_\gamma$ is a finitely branching tree with no infinite branch and is hence finite, by K\"onig's Lemma. Terminal nodes in $T_\gamma$ consist each of a single $\Pi^1_n$ formula and no ordinal parameters. Each of these formulas $\theta$ can be thought of as naming the least ordinal $\delta$ such that $L_\delta\models\theta$. 

Let $T^-_\gamma$ be the result of deleting all ordinals from $T_\gamma$; thus, $T^-_\gamma$ is simply a finite tree of $\Pi^1_n$ formulas. By induction from the leaves down to the root, we see that $T_\gamma$ is determined uniquely by $T^-_\gamma$. We have seen that every ordinal $\gamma<\pi^1_n$ can be assigned such a tree $T^-_\gamma$, and each finite tree $T$ of $\Pi^1_n$ formulas corresponds to at most one ordinal $\gamma<\pi^1_n$; if so, we say that $T$ \textit{denotes} $\gamma$.

Now, let $G\subset \P$ be $L$-generic. If $\gamma < (\pi^1_n)^{L[G]}$, then there is some finite tree $T$ of $\Pi^1_n$ formulas which denotes $\gamma$ in $L[G]$. By the weak homogeneity of $\P$, we have $\forces_\P$ ``$T$ denotes $\gamma$.'' Let 
\[S = \min\Big\{ \mathsf{Ord} \setminus \big\{\gamma: \exists T\, (\text{$\forces_\P$ ``$T$ denotes $\gamma$'')}\big\} \Big\}.\]
Then, $S$ is countable in $L$ and $\Vdash_\P S = \pi^1_n$, so $(\pi^1_n)^{L[G]} < \omega_1^L$. As we mentioned before, the argument for $\sigma^1_n$ is similar.
\end{proof}

Unfortunately, Lemma \ref{LemmaReflectionCountable} cannot be extended to arbitrary outer models in the spirit of Corollary \ref{CorollaryOmega2L}:

\begin{proposition}
Suppose $0^\sharp$ exists. Then, $\omega_1^L < \sigma^1_3$ and $\omega_1^L < \pi^1_3$.
\end{proposition}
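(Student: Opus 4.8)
The plan is to prove the (equivalent) statement that \emph{no} ordinal $\alpha\le\omega_1^L$ is $\Sigma^1_3$- or $\Pi^1_3$-reflecting in $V$; since the $\Sigma^1_3$- and $\Pi^1_3$-reflecting ordinals are each cofinal in $\omega_1$, this yields $\omega_1^L<\sigma^1_3$ and $\omega_1^L<\pi^1_3$ at once. I will use three consequences of the existence of $0^\sharp$: first, $\omega_1^L$ is countable in $V$, so that for every $\alpha\le\omega_1^L$ second-order logic over $L_\alpha$ is projective quantification over a real coding $L_\alpha$, with $0^\sharp$ itself available as a second-order object; second, the least Silver indiscernible $i_0$ satisfies $\omega_1^L<i_0$, because $L_{i_0}\prec L$ forces $i_0$ to be inaccessible, hence a limit cardinal, in $L$; and third, writing $C=\{i_\xi:\xi\in\mathrm{Ord}\}$ for the club of Silver indiscernibles, every ordinal is of the form $t^L(i_{\xi_1},\dots,i_{\xi_k})$ for some Skolem term $t$ and indiscernibles $i_{\xi_1}<\dots<i_{\xi_k}$.

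The first step is a normal-form observation for ordinals below $i_0$. Fix $\alpha<i_0$ (in particular any $\alpha\le\omega_1^L$). Since every indiscernible is $\ge i_0>\alpha$, whenever $\alpha=t^L(i_{\xi_1},\dots,i_{\xi_k})$ the value lies strictly below $\min\{i_{\xi_1},\dots,i_{\xi_k}\}$; by the standard indiscernibility lemma the value of such a ``below the indiscernibles'' term is independent of the increasing tuple from $C$ chosen, so in fact $\alpha=t_\alpha^L(i_0,i_1,\dots,i_{k-1})$ for a fixed term $t_\alpha$ evaluated at the \emph{first} $k$ indiscernibles. Thus each $\alpha<i_0$ is canonically named by a Skolem term applied to an initial segment of $C$, and the ordinal so named is determined by $t_\alpha$ alone.

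The second and main step is to convert this naming into a reflection-killing sentence, exploiting that $0^\sharp$ is a $\Pi^1_2$-singleton. For $\alpha\le\omega_1^L$ let $\Psi_\alpha$ assert, over $L_\alpha$, that ``my ordinal height equals $t_\alpha^L(i_0,\dots,i_{k-1})$'': formally, $\exists z\,[\,z=0^\sharp\wedge\exists f\,(f\text{ is an order isomorphism between the ordinals of this structure and the ordinals of }L_{t_\alpha(i_0,\dots,i_{k-1})}\text{ as reconstructed from }z)\,]$. Here ``$z=0^\sharp$'' is $\Pi^1_2$, the countable level $L_{t_\alpha(i_0,\dots,i_{k-1})}$ is reconstructed arithmetically from the theory coded by $z$, and the matching isomorphism $f$ is a single real; counting quantifiers gives that $\Psi_\alpha$ is $\Sigma^1_3$, and since $0^\sharp$ is the unique real satisfying its defining $\Pi^1_2$ condition, rewriting $\exists z$ as $\forall z\,(z=0^\sharp\rightarrow\cdots)$ shows $\Psi_\alpha$ is equivalently $\Pi^1_3$. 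Because $t_\alpha^L(i_0,\dots,i_{k-1})$ is the fixed ordinal $\alpha$, the \emph{parameter-free} sentence $\Psi_\alpha$ holds in $L_\gamma$ exactly when $\gamma=\alpha$. In particular $L_\alpha\models\Psi_\alpha$ while $L_\gamma\not\models\Psi_\alpha$ for all $\gamma<\alpha$, so $\Psi_\alpha$ witnesses that $\alpha$ is neither $\Sigma^1_3$- nor $\Pi^1_3$-reflecting. Applying this to every $\alpha\le\omega_1^L$ finishes the proof.

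The step I expect to be the main obstacle is the precise complexity bookkeeping in the third paragraph: one must check that reconstructing the countable structure $L_{t_\alpha(i_0,\dots,i_{k-1})}$ from the theory $0^\sharp$ and verifying the order isomorphism can be arranged to keep $\Psi_\alpha$ strictly within $\Sigma^1_3$ (and, dually, within $\Pi^1_3$) without leaking an extra second-order quantifier. A secondary point needing explicit care is the indiscernibility lemma underpinning the normal form, namely that a Skolem term whose value lies below the indiscernibles takes the same value on every increasing tuple from $C$; this is standard, but it is exactly what makes the name $t_\alpha$, and hence the sentence $\Psi_\alpha$, carry no ordinal parameters, which is essential for obtaining non-reflection with the empty parameter tuple.
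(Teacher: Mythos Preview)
Your argument is correct and is essentially the same as the paper's. Both proofs observe that every ordinal $\alpha\le\omega_1^L$ lies below the least Silver indiscernible and is therefore pinned down by a finite syntactic datum together with $0^\sharp$ (you use a Skolem term evaluated at an initial segment of the indiscernibles, the paper uses an index in Kleene's $\mathcal{O}^{0^\sharp}$), and then exploit that $0^\sharp$ is a $\Pi^1_2$-singleton---equivalently $\Delta^1_3$---to produce, for each such $\alpha$, a parameter-free $\Sigma^1_3$ sentence and a parameter-free $\Pi^1_3$ sentence each of which is satisfied by $L_\gamma$ exactly when $\gamma=\alpha$.
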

\begin{proof}
Recall a theorem of Solovay \cite{So67} according to which $0^\sharp$ is $\Delta^1_3$-definable. If $0^\sharp$ exists, every ordinal smaller than the least $L$-indiscernible, $\zeta_0$ is parameter-free definable in $L$ and hence recursive in $0^\sharp$. Let $e_\zeta$ be an element of $\mathcal{O}^{0^\sharp}$ of order-type $\zeta$ from $0^\sharp$, where $\mathcal{O}^x$ denotes Kleene's system of ordinal notatinos relativized to $x$. If $x$ is any real number, then $e_\zeta$ could also be regarded as an $x$-ordinal notation by replacing all oracle calls to $0^\sharp$ by oracle calls to $x$. We write $e_\zeta^x$ to emphasize this fact (note $e_\zeta^x$ might not result in an element of $\mathcal{O}^x$, since it might code an illfounded $x$-recursive order when $x\neq 0^\sharp$).
Then, $\zeta$ is the only ordinal such that
\[L_\zeta \models \exists x\, (x = 0^\sharp \wedge \mathsf{Ord} = |e_\zeta^x|),\]
where $|e_\zeta^x|$ denotes the order-type of the notation $e_\zeta^x \in\mathcal{O}^x$ and both the quantification over $x$ and the formula $x = 0^\sharp$ are expressed in second-order logic. Similarly, $\zeta$ is the only ordinal such that
\[L_\zeta \models \forall x\, (x = 0^\sharp \to \mathsf{Ord} = |e_\zeta^x|).\]
These are respectively $\Sigma^1_3$ and $\Pi^1_3$ formulas, so the result follows. 
\end{proof}

\begin{lemma}\label{LemmaSigmaLG}
Assume $V = L$ and let $3\leq n < \omega$. Then, $\forces_\P (\pi^1_n)^L < \sigma^1_3$.
\end{lemma}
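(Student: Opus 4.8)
The plan is to locate $(\pi^1_n)^L$ below $\sigma^1_3$ by exhibiting it as the order type of a \emph{lightface} $\Sigma^1_3$-definable well-ordering in $L[G]$ and then invoking a boundedness principle: in any model, the order types of well-orderings whose codes are $\Sigma^1_3$-singletons are bounded by the least $\Sigma^1_3$-reflecting ordinal. Granting this, since $(\pi^1_n)^L$ will be such an order type, we get $(\pi^1_n)^L \le \sigma^1_3$; combined with the separate observation that $(\pi^1_n)^L$ is itself \emph{not} $\Sigma^1_3$-reflecting in $L[G]$ (so $(\pi^1_n)^L \neq \sigma^1_3$), this yields the strict inequality $(\pi^1_n)^L < \sigma^1_3$.

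First I would produce the $\Sigma^1_3$-definition. Working in $L[G]$, a $\P$-generic exists (as in Lemma \ref{LemmaGenericPi12}, $L_{\omega_1^L}$ is countable), so the generic-capturing device of Lemma \ref{LemmaKillingReflection} applies: the relation ``$\gamma = (\pi^1_n)^L$'' is expressible over $L_{\omega_1^L}$ by a first-order formula $\phi_{\pi^1_n}$, and asserting ``$\exists M\,\exists y$ ($y$ is $\P$-generic over $L$, $y \in M$, $M$ transitive, $M \models \mathsf{ZC}$, and $(L_{\omega_1^L})^M \models \phi_{\pi^1_n}(\gamma)$)'' turns it into a $\Sigma^1_3$ predicate of $\gamma$; since ``$y$ is $\P$-generic'' is $\Pi^1_2$, the whole thing is $\Sigma^1_3$. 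Taking $r^\ast$ to be the $<_L$-least real coding the unique such $\gamma$ (which is below $\omega_1^L$, hence has a constructible code), $r^\ast$ is a $\Sigma^1_3$-singleton coding a well-ordering of type $(\pi^1_n)^L$. Reinterpreting the same $\Sigma^1_3$ formula over the levels $L_\alpha$ — exactly as in the remark following Lemma \ref{LemmaKillingReflection}, using that the witnessing $M,y$ are reals available to the full second-order logic of any $L_\alpha$ — gives a parameter-free $\Sigma^1_3$ sentence $\psi^\ast$ with $L_\alpha \models \psi^\ast$ iff $\alpha = (\pi^1_n)^L$. This single formula, first true at level $(\pi^1_n)^L$, witnesses that $(\pi^1_n)^L$ fails $\Sigma^1_3$-reflection in $L[G]$, so $(\pi^1_n)^L \neq \sigma^1_3$.

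The main obstacle is the boundedness step, i.e.\ upgrading $\neq$ to $<$. I expect this to be the crux because downward reflection by itself is too weak: a parameter-free $\Sigma^1_3$ formula first true at some level $\beta > \sigma^1_3$ is simply false at $L_{\sigma^1_3}$, so reflection at $\sigma^1_3$ gives no information and only rules out $\beta = \sigma^1_3$. What is needed is a genuine boundedness theorem at the level of $\Sigma^1_3$-reflecting ordinals, analogous to $\Sigma^1_1$-bounding below $\omega_1^{\mathrm{ck}}$: since $\sigma^1_3$ is $\Sigma^1_3$-reflecting it is in particular admissible, and one shows that the order type of any $\Sigma^1_3$-singleton well-ordering is computed by a $\Sigma^1_3$-recursion that must close off below $\sigma^1_3$. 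Applying this to $r^\ast$ bounds $(\pi^1_n)^L$ by $\sigma^1_3$.

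An alternative route to the same bound, staying closer to the paper's earlier methods, is to argue contrapositively and kill reflection at \emph{every} $\delta \le (\pi^1_n)^L$. By Lemma \ref{dontgodown} any $\delta$ that is $\Sigma^1_3$-reflecting in $L[G]$ is already $\Sigma^1_3$-reflecting in $L$, so it suffices to show that each such $\delta$ loses reflection in $L[G]$; for $\delta$ that are definable over $L_{\omega_1^L}$ from ordinal parameters $< \delta$ this follows verbatim from the generic-capturing construction above (the parameter is carried along inside $\phi$). The delicate point — and the reason $(\pi^1_n)^L$ rather than $\omega_1^L$ is the correct threshold — is that the reflecting-in-$L$ ordinals form a club and hence cluster, so one must use that below the $\Pi^1_n$-reflecting ordinal $(\pi^1_n)^L$ every $\Sigma^1_3$-reflecting ordinal is so definable from below, whereas $\sigma^1_3$ itself, whose reflection genuinely depends on the new reals added by $\P$ (which are \emph{not} captured by $L_{\omega_1^L}$, as $\P$-names for reals have size $\omega_1^L$), is not. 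Either way the lemma follows, and this is precisely the ``old below new'' input feeding the chain of Theorem \ref{collapse}.
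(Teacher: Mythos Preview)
Your approach 1 does not go through as written: the ``boundedness theorem'' you invoke (that order types of $\Sigma^1_3$-singleton wellorderings are bounded by $\sigma^1_3$) is not established anywhere, and it is not clear it holds in the generality you need. You correctly diagnose that producing a single $\Sigma^1_3$ sentence first true at $(\pi^1_n)^L$ only yields $(\pi^1_n)^L \neq \sigma^1_3$, but the remedy you propose is hand-waving.

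Your approach 2 is the right strategy and is essentially what the paper does, but you stop exactly at the point where the work lies. You assert that ``below $(\pi^1_n)^L$ every $\Sigma^1_3$-reflecting ordinal is definable over $L_{\omega_1^L}$ from ordinal parameters below it'' without proof. This is the whole content of the lemma. The paper supplies it by invoking the finite tree $T_\gamma^-$ from Lemma~\ref{LemmaReflectionCountable}: every $\gamma < (\pi^1_n)^L$ is \emph{denoted} (in the sense of that lemma) by a finite tree of $\Pi^1_n$ formulas, a parameter-free object, and ``$T_\gamma^-$ denotes $x$'' is first-order over $L_{\omega_1^L}$. Plugging this into the generic-capturing template of \eqref{eqDefSigma1nLG} gives, for \emph{each} $\gamma < (\pi^1_n)^L$, a parameter-free $\Sigma^1_3$ sentence first true at $L_\gamma$, so no such $\gamma$ is $\Sigma^1_3$-reflecting in $L[G]$; the strict inequality then comes from Lemma~\ref{LemmaKillingReflection}.

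There is in fact a shorter justification of your missing claim that avoids the full tree: if $\delta < (\pi^1_n)^L$ then $\delta$ is not $\Pi^1_n$-reflecting in $L$, so there are $\vec\gamma < \delta$ and a $\Pi^1_n$ formula $\varphi$ with $L_\delta\models\varphi(\vec\gamma)$ in $L$ and no smaller level satisfying it; this already exhibits $\delta$ as first-order definable over $L_{\omega_1^L}$ from $\vec\gamma$, and the generic-capturing device then gives a $\Sigma^1_3$ formula with parameters $\vec\gamma < \delta$ first true at $L_\delta$ in $L[G]$. This is enough to kill $\Sigma^1_3$-reflection at $\delta$. You had all the pieces for this one-step argument but did not put them together; the restriction to ``$\Sigma^1_3$-reflecting in $L$'' ordinals is unnecessary, since the failure of $\Pi^1_n$-reflection applies to \emph{every} $\delta < (\pi^1_n)^L$.
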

\begin{proof}
The idea of the proof is to extend the argument of Lemma \ref{LemmaKillingReflection} by incorporating the ideas from the proof of Lemma \ref{LemmaReflectionCountable}.

Let $\gamma < (\pi^1_n)^L$ and let $T_\gamma^-$ be as in Lemma \ref{LemmaReflectionCountable}. Recall that $T_\gamma^-$ is a finite tree of $\Pi^1_n$ formulas.
The fact that $T_\gamma^-$ denotes $\gamma$ (in the sense of Lemma \ref{LemmaReflectionCountable}) is first-order expressible over $L_{\omega_1^L}$, since it is a Boolean combination of $\Pi^1_n$ properties of $\gamma$ and a set of smaller ordinals which are determined uniquely from $T_\gamma^-$. We now argue similarly to the proof of Lemma \ref{LemmaKillingReflection} by observing that the formula $x = \gamma$ is equivalent, in $L[G]$, to
\begin{align}\label{eqDefSigma1nLGStr}
\exists M\,\exists y\subset\P\, \Big(&\text{$y$ is $L$-generic} \wedge y \in M \wedge M \, {\rm transitive} \\
& \wedge M\models \mathsf{ZC} \wedge (L_{\omega_1^L})^M \models \text{``$T^-_\gamma$ denotes $x$''}\Big).\nonumber
\end{align}
Observe that \eqref{eqDefSigma1nLGStr} is a $\Sigma^1_3$ formula, by Lemma \ref{LemmaGenericPi12}. The fact that \eqref{eqDefSigma1nLGStr} is equivalent to $x = \gamma$ is proved just like \eqref{eqDefSigma1nLG} in the proof of Lemma \ref{LemmaKillingReflection}. This shows that $(\pi^1_n)^L \leq (\sigma^1_3)^{L[G]}$, but the strict inequality follows from Lemma \ref{LemmaKillingReflection} or, alternatively by increasing $n$.
\end{proof}

The main ingredient missing towards the proof of Theorem \ref{collapse} is the fact that $\sigma^1_n < \pi^1_n$ holds in $L[G]$, which will take place in the remainder of this section. Towards establising this inequality, we introduce two auxiliary ordinals. 

\begin{definition}
We define the ordinal $\sigma_{\Sigma_n,\omega_2}$ to be the least ordinal $\sigma$ such that whenever $\phi \in \Sigma_n$ and $\gamma_1 < \gamma_2 < \dots < \gamma_k < \sigma$ are ordinals, if $L_{\omega_2^L}\models \phi(\sigma, \gamma_1, \hdots, \gamma_k)$, then there is $\bar\sigma$ such that $\gamma_k <\bar\sigma < \sigma$ and $L_{\omega_2^L}\models \phi(\bar\sigma, \gamma_1, \hdots, \gamma_k)$. 

The ordinal $\sigma_{\Pi_n,\omega_2}$ is defined analogously.
\end{definition}

Let us remark that if $M$ is the transitive collapse of a countable elementary substructure of some large $H(\kappa)$, then the ordinals $\sigma_{\Sigma_n,\omega_2}^M$ and $\sigma_{\Pi_n,\omega_2}^M$ satisfy the defining properties of $\sigma_{\Sigma_n,\omega_2}$ and $\sigma_{\Pi_n,\omega_2}$; it follows that -- despite the occurrence of $\omega_2$ in their definition -- these ordinals are countable.

\begin{lemma}\label{LemmaSigmaOmega2}
Suppose that $2 \leq n < \omega$ and $G\subset \P$ is $L$-generic. Then, $(\sigma^1_{n+1})^{L[G]} = \sigma_{\Sigma_n,\omega_2}^L$ and $(\pi^1_{n+1})^{L[G]} = \sigma_{\Pi_n,\omega_2}^L$.
\end{lemma}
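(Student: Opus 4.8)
The plan is to prove both identities by showing, for every $\alpha<\omega_2^L=\omega_1^{L[G]}$, that $\alpha$ is $\Sigma^1_{n+1}$-reflecting in $L[G]$ exactly when $\alpha$ satisfies the reflection property defining $\sigma_{\Sigma_n,\omega_2}$ in $L$ (and dually for $\Pi^1_{n+1}$ and $\sigma_{\Pi_n,\omega_2}$); equality of the least witnesses then follows. The bridge is a uniform two-way translation between second-order $\Sigma^1_{n+1}$ statements evaluated over the countable levels $L_\xi$ in $L[G]$ and first-order $\Sigma_n$ statements evaluated over the single fixed structure $L_{\omega_2^L}=(L_{\omega_1})^{L[G]}$ in $L$. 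I will use throughout that $\omega_2^L$ is an $L$-cardinal, so $L_{\omega_2^L}=H_{\omega_2}^L\models\mathsf{ZF}^-$ and is closed under $x\mapsto$ the least transitive $\mathsf{ZF}^-$-model containing $x$; that $\P\in L_{\omega_2^L}$ with its $\Sigma^1_2$ definition trivialising there; and that every nice $\P$-name for a subset of a countable $L_\xi$ has size $\leq|\P|=\omega_1^L$ and hence lies in $L_{\omega_2^L}$.

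First I would establish the forward translation: to each $\Sigma^1_{n+1}$ formula $\varphi$ I attach a first-order $\Sigma_n$ formula $\phi$ with $L[G]\models(L_\xi\models\varphi(\vec\gamma))$ if and only if $L_{\omega_2^L}\models\phi(\xi,\vec\gamma)$, uniformly in $\xi<\omega_2^L$. Since $\xi,\vec\gamma$ are ordinals, hence ground-model parameters, weak homogeneity of $\P$ lets me replace the left-hand side by $\mathbbm 1\forces_\P(L_\xi\models\varphi(\vec\gamma))$. Unfolding the quantifier prefix of $\varphi$ by the maximal principle turns each of its outer $n$ second-order quantifiers into a first-order quantifier over $\P$-names, which range over $L_{\omega_2^L}$ by the remark above. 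The key point, exactly parallel to Lemmas \ref{pi12} and \ref{sigma12}, is that the \emph{innermost} second-order quantifier is absorbed: for arithmetic $\theta$ the relation ``$\mathbbm 1\forces_\P L_\xi\models Q X_n\,\theta(\vec{\dot X},X_n)$'' is $\Delta_1$ over $L_{\omega_2^L}$, since it may be checked inside any one transitive $L_\eta\models\mathsf{ZF}^-$ containing $\xi$ and the $\dot X_i$ (all such agree, as forcing of an arithmetic statement is absolute between such models), and such $L_\eta$ exist below $\omega_2^L$. Thus the $n+1$ second-order blocks of $\varphi$ produce exactly $n$ first-order blocks over $L_{\omega_2^L}$ above a $\Delta_1$ matrix, yielding a $\Sigma_n$ formula; the $\Pi$-case is dual.

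Next I would prove the backward translation: to each first-order $\Sigma_n$ formula $\phi(\sigma,\vec\gamma)$ I attach a $\Sigma^1_{n+1}$ formula $\varphi$ satisfying the same equivalence uniformly in $\xi$. Imitating the proofs of Lemmas \ref{LemmaKillingReflection} and \ref{LemmaSigmaLG}, I render ``$L_{\omega_2^L}\models\phi(\xi,\vec\gamma)$'' second-order over the countable $L_\xi$ by asserting the existence---coded by a subset of $L_\xi$, i.e.\ a real of $L[G]$---of a countable transitive $M\models\mathsf{ZC}$ containing a $\P$-generic $y$ over $L$ with $\xi,\vec\gamma\in M$ such that $M\models(L_{\omega_1}\models\phi(\xi,\vec\gamma))$. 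By Lemma \ref{LemmaGenericPi12} the genericity clause is $\Pi^1_2$, and weak homogeneity makes the value assigned to the inner statement independent of the witness; counting quantifiers, the outer existential over $M$ together with the $\Pi^1_2$ clause wraps the $\Sigma_n$ body into a $\Sigma^1_{n+1}$ formula, dually to the one-block gain of the forward step. The delicate point is that $L_{\omega_2^L}$ is uncountable in $L[G]$, so such a countable $M$ cannot contain it; instead its internal $(L_{\omega_1})^M$ is a genuine initial segment $L_{\bar\delta}$ with $\omega_1^L<\bar\delta<\omega_2^L$, and I must choose $M$ so that $L_{\bar\delta}\prec_{\Sigma_n}L_{\omega_2^L}$. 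Such $M$ arise as transitive collapses of countable $X\prec L_\theta$ with $X\cap\omega_1^{L[G]}$ a $\Sigma_n$-correct point, which exist cofinally, and this is what forces the correct complexity.

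With both translations in hand the equivalence is immediate. If $\alpha$ has the $\sigma_{\Sigma_n,\omega_2}$ reflection property and $L_\alpha\models\varphi(\vec\gamma)$ in $L[G]$ for $\Sigma^1_{n+1}$ $\varphi$, the forward translation gives $L_{\omega_2^L}\models\phi(\alpha,\vec\gamma)$, reflection yields $\bar\alpha\in(\gamma_k,\alpha)$ with $L_{\omega_2^L}\models\phi(\bar\alpha,\vec\gamma)$, and uniformity returns $L_{\bar\alpha}\models\varphi(\vec\gamma)$; the converse direction uses the backward translation symmetrically. Hence $\alpha$ is $\Sigma^1_{n+1}$-reflecting in $L[G]$ iff it is $\sigma_{\Sigma_n,\omega_2}$-reflecting in $L$, so the least such ordinals agree, giving $(\sigma^1_{n+1})^{L[G]}=\sigma_{\Sigma_n,\omega_2}^L$; the argument for $(\pi^1_{n+1})^{L[G]}=\sigma_{\Pi_n,\omega_2}^L$ is verbatim the same. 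I expect the main obstacle to be the exact quantifier bookkeeping sustaining the $\Sigma^1_{n+1}\leftrightarrow\Sigma_n$ correspondence: on the forward side, verifying that for the non-ccc $\P$ the innermost second-order quantifier really collapses to a $\Delta_1$-over-$L_{\omega_2^L}$ forcing predicate (which needs both that names for reals lie in $L_{\omega_2^L}$ and that forcing of arithmetic statements is definable there), and on the backward side, capturing the uncountable $L_{\omega_2^L}$ through $\Sigma_n$-correct countable approximations drawn from generic-containing models while keeping the genericity wrapper to a single block.
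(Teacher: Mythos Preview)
Your forward translation ($\Sigma^1_{n+1}$ in $L[G]$ to $\Sigma_n$ over $L_{\omega_2^L}$) is correct and essentially the paper's argument. The paper absorbs the innermost second-order quantifier via the Gandy basis theorem rather than by observing that forcing a $\Sigma^1_1$/$\Pi^1_1$ statement is $\Delta_1$ over $L_{\omega_2^L}$, but these are equivalent mechanisms and your version is arguably cleaner.

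Your backward translation, however, has a genuine gap. You try to import the device of Lemmas \ref{LemmaKillingReflection} and \ref{LemmaSigmaLG}: assert the existence of a countable transitive $M$ containing a $\P$-generic $y$ and read off what $M$ thinks $L_{\omega_1}$ satisfies. That device works in those lemmas because the target structure is $L_{\omega_1^L}$, which is \emph{countable} in $L[G]$ and is pinned down exactly by any $\P$-generic. Here the target is $L_{\omega_2^L}=L_{\omega_1^{L[G]}}$, which is \emph{uncountable} in $L[G]$, so $(L_{\omega_1})^M$ is only some proper initial segment $L_{\bar\delta}$. You correctly note that you then need $L_{\bar\delta}\prec_{\Sigma_n} L_{\omega_2^L}$, but you never explain how to express this clause inside the projective formula. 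Without it, the equivalence fails for $n\geq 2$: there will be $M$ with $L_{\bar\delta}\models\phi$ while $L_{\omega_2^L}\not\models\phi$, so your existential formula can hold spuriously. And including the clause is circular, since ``$L_{\bar\delta}\prec_{\Sigma_n}L_{\omega_1}$'' itself quantifies over $\Sigma_n$ truth in $L_{\omega_1}$, which is precisely what you are trying to encode; any honest rendering pushes the complexity above $\Sigma^1_{n+1}$.

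The paper's backward direction avoids all of this: since $L_{\omega_2^L}=L_{\omega_1}^{L[G]}\subseteq H_{\omega_1}^{L[G]}$, one uses the standard translation of $\Sigma_n$-over-$H_{\omega_1}$ into $\Sigma^1_{n+1}$ (each first-order quantifier over $L_{\omega_1}$ becomes a real quantifier over codes for well-founded countable $L$-levels, with the well-foundedness/constructibility check absorbed into the adjacent block). No generics, no correct points, no countable $M$ are needed. Replace your backward translation with this direct coding and the rest of your argument goes through.
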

\proof
Let us show that $(\sigma^1_{n+1})^{L[G]} = \sigma_{\Sigma_n,\omega_2}^L$; the proof that $(\pi^1_{n+1})^{L[G]} = \sigma_{\Pi_n,\omega_2}^L$ is analogous.

The proof that  $\sigma_{\Sigma_n,\omega_2}^L \leq (\sigma^1_{n+1})^{L[G]}$ is easy: every $\Sigma_2$ formula about $L_{\omega_2}$ can be uniformly translated into into a $\Sigma_2$ formula about $L_{\omega_1}^{L[G]}$ via $\varphi\mapsto \varphi^L$. In turn, every $\Sigma_2$ formula about $L_{\omega_1}^{L[G]}$ with parameters $\sigma, \vec\gamma$ can be uniformly translated into a $\Sigma^1_3$ formula over $L_\sigma$ with parameters $\gamma$.

To show that $(\sigma^1_{n+1})^{L[G]} \leq \sigma_{\Sigma_n,\omega_2}^L$, it suffices to prove that $\P$ forces that  $\sigma_{\Sigma_n,\omega_2}^L$ is $\Sigma^1_{n+1}$-reflecting. As in the proof of Theorem \ref{Cohen}, we assume without loss of generality that $n =2$ and suppose towards a contradiction that there is some $\Sigma^1_3$ formula $\varphi$ and parameter $\beta < \sigma_{\Sigma_n,\omega_2}^L =: \sigma$ such that 
\[\forces_\P \text{``$L_\sigma \models \varphi(\beta)$'' $\wedge$ ``$\forall \bar\sigma\, \big(\beta < \bar\sigma < \sigma \to L_{\bar\sigma}\not\models\varphi(\beta)\big)$.''}\]
For definiteness, write $\varphi$ as
a formula of the form $\exists X\forall Y\exists Z \psi(X,Y,Z,x)$. By the Gandy basis theorem, the following are equivalent for all countable $\alpha$:
\begin{enumerate}
\item $L_\alpha \models \exists X\forall Y\exists Z \psi(X,Y,Z,\beta)$; and
\item there is $X\subset\alpha$ such that for all $Y_0\subset\alpha, Y_1 \subset\mathbb{N}$, if $Y_1$ is a structure isomorphic to $(L_{\alpha'}, X, Y_0)$, where $\alpha < \alpha'$ and $\alpha'$ is admissible, then there is $Z \leq_T Y_1$ such that $L_\alpha\models \psi(X,Y,Z,\beta)$.
\end{enumerate}
Thus, if $\alpha<\omega_1^{L[G]} = \omega_2^L$, then $L_\alpha \models \exists X\forall Y\exists Z \psi(X,Y,Z,\beta)$ can be expressd in $L[G]$ as a $\Sigma_2$ formula over $L_{\omega_2^L}$, uniformly in $\alpha$.
Arguing as in Lemma \ref{LOlemma}, we see that the formula $\forces_\P$ ``$L_\sigma \models \varphi(\beta)$'' is thus equivalent to a formula $\theta(\sigma)$ of the form
\begin{align*}
\exists \dot X \in \P{\rm -}\mathsf{NAME}\,\forall \dot Y_0, \dot Y_1 \in \P{\rm -}\mathsf{NAME}
&\Big(\forces_\P Y_1 \cong (L_{\sigma'}, \dot X, \dot Y_0) \wedge \sigma < \sigma' \wedge L_{\sigma'}\models\mathsf{KP}\\
&\to\, \forces_\P \exists \dot Z\leq_T \dot Y_1\, L_\sigma\models \psi(\dot X, \dot Y_0, \dot Z, \beta)\Big).
\end{align*}
As in the proof of Theorem \ref{Cohen}, we see that the quantification over $\P$-names can be replaced by quantification over $L_{\omega_2^L}$, so $\theta(\sigma)$ can be expressed as a $\Sigma_2$ formula over $L_{\omega_2^L}$, uniformly for all $\sigma$, so we obtain a contradiction to the definition of $\sigma_{\Sigma_2,\omega_2}$.
\endproof

\begin{lemma}\label{LemmaSigmaPiInLG}
Assume $V = L$ and let $3\leq n < \omega$. Then, $\forces_\P \sigma^1_n < \pi^1_n$.
\end{lemma}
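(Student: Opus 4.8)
The plan is to transfer everything, via Lemma \ref{LemmaSigmaOmega2}, to a question of purely first-order reflection over $L_{\omega_2^L}$ inside $L$. Writing $m = n-1$ (so $m \ge 2$), Lemma \ref{LemmaSigmaOmega2} gives $(\sigma^1_n)^{L[G]} = \sigma_{\Sigma_m,\omega_2}^L$ and $(\pi^1_n)^{L[G]} = \sigma_{\Pi_m,\omega_2}^L$, and since $\P$ is weakly homogeneous these values are already decided by the trivial condition and computed in $L$. Hence it suffices to prove, working in $L$, the combinatorial inequality $\sigma_{\Sigma_m,\omega_2} < \sigma_{\Pi_m,\omega_2}$. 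From here on I would work in $L$, abbreviate $\sigma_{\Sigma_m,\omega_2}$ and $\sigma_{\Pi_m,\omega_2}$ by $\sigma$ and $\tau$, and evaluate all formulas over the fixed structure $N := L_{\omega_2^L}$, a model of $\mathsf{ZFC}^-$.

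The engine of the argument is a complexity lemma in the spirit of Aczel--Richter: the predicate $R(x) := $ ``$x$ is $\Sigma_m$-reflecting over $N$'' is itself $\Pi_m$ over $N$ (for $m \ge 2$). Granting this, the inequality falls out cleanly. First, $R(\sigma)$ is a $\Pi_m$ fact true at $\sigma$ but, by minimality of $\sigma$, at no smaller ordinal, so it fails to reflect and $\sigma$ is not $\Pi_m$-reflecting; in particular $\sigma \neq \tau$. To pin down the direction I would use that $\tau$ is $\Pi_m$-reflecting and (subsidiary fact) is itself $\Sigma_m$-reflecting, i.e.\ $R(\tau)$ holds; since $R$ is $\Pi_m$ and $\tau$ is $\Pi_m$-reflecting, $R$ must reflect below $\tau$, producing a $\Sigma_m$-reflecting ordinal $\bar\tau < \tau$. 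As $\sigma$ is the least such ordinal, $\sigma \le \bar\tau < \tau$, as desired. Translating this first-order non-reflection instance back through Lemma \ref{LemmaSigmaOmega2} then shows $(\sigma^1_n)^{L[G]}$ is not $\Pi^1_n$-reflecting, which is what we want.

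The hard part, and the crux of the whole theorem, is the complexity lemma: expressing ``$x$ is $\Sigma_m$-reflecting'' with only $m$ alternations rather than the $m+1$ one reads off naively. The naive form $\forall\phi\,\forall\vec\gamma\,[\mathrm{Tr}_{\Sigma_m}(\phi,x,\vec\gamma) \to \exists\bar\sigma\,(\gamma_k<\bar\sigma<x \wedge \mathrm{Tr}_{\Sigma_m}(\phi,\bar\sigma,\vec\gamma))]$ has a leading implication whose matrix is $\Sigma_m$ sitting under a universal first-order quantifier, hence $\Pi_{m+1}$. The plan is to strip the extra quantifier by bounding the existential witnesses inside elementary substructures: since $N \models \mathsf{ZFC}^-$, the levels $L_\delta$ with $L_\delta \prec_{\Sigma_{m-1}} N$ form a club, a $\Sigma_m$ fact $\exists w\,\theta(w,y,\vec\gamma)$ with $\theta \in \Pi_{m-1}$ is witnessed inside some such $L_\delta$ containing $y,\vec\gamma$, and $\Pi_{m-1}$-truth evaluated inside a set model is $\Delta_1$. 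Thus the reflection condition can be localized to ``$x$ is reflecting as computed inside $L_\delta$'' for $\Sigma_{m-1}$-elementary $\delta\ni x$, which — using closure of $\Sigma_m,\Pi_m$ under bounded quantification over models of collection — is $\forall\delta\,[\,L_\delta\prec_{\Sigma_{m-1}} N \wedge x\in L_\delta \to \Delta_1(\delta,x)\,]$, i.e.\ $\Pi_m$. The genuine subtlety here, which is where the careful work lives, is that $\prec_{\Sigma_{m-1}}$ does not preserve the $\Sigma_m$ formulas themselves, so one must arrange, using the reflection available in $N$, a single $\delta$ handling all the relevant witnesses at once rather than a different $\delta$ per instance. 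This is also precisely the step that uses $m \ge 2$: at $m=1$ the $\Sigma_{m-1}=\Sigma_0$ elementarity argument gives no traction, and indeed the inequality reverses at the bottom level, mirroring Aanderaa's $\pi^1_1 < \sigma^1_1$.

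Finally, I would record the subsidiary fact invoked above, that every $\Pi_m$-reflecting ordinal ($m\ge 2$) is $\Sigma_m$-reflecting, so that $R(\tau)$ indeed holds. This is proved by the same witness-bounding device: a $\Sigma_m$ fact true at a $\Pi_m$-reflecting ordinal is re-expressed, via $\Sigma_{m-1}$-elementary levels, in a form amenable to $\Pi_m$-reflection, so that its instances reflect downward and $\Sigma_m$-reflection follows from $\Pi_m$-reflection. I expect the bulk of the technical effort to be concentrated entirely in the complexity lemma of the third paragraph; once it is in place, the two-line reflection argument of the second paragraph and the subsidiary fact are routine.
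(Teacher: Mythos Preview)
Your reduction via Lemma \ref{LemmaSigmaOmega2} and the overall shape of the argument---find a $\Pi_m$ property implying $\Sigma_m$-reflection, show it holds at $\tau=\sigma_{\Pi_m,\omega_2}$, and reflect it below---match the paper exactly. The gap is in the two technical claims you flag as ``the crux'' and ``subsidiary.''

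Your localization sketch for the complexity lemma does not give an equivalence. Write $R_\delta(x)$ for ``$x$ is $\Sigma_m$-reflecting as computed in $L_\delta$.'' You propose $R(x)\leftrightarrow \forall\delta\,[L_\delta\prec_{\Sigma_{m-1}}N\wedge x\in L_\delta\to R_\delta(x)]$. The right-to-left direction is fine, but the left-to-right fails: if $L_\delta\models\phi(x,\vec\gamma)$ for $\phi\in\Sigma_m$, then by upward absoluteness $N\models\phi(x,\vec\gamma)$, and $R(x)$ gives $\bar\sigma<x$ with $N\models\phi(\bar\sigma,\vec\gamma)$---but the witness to this $\Sigma_m$ fact need not lie in $L_\delta$, so $L_\delta\models\phi(\bar\sigma,\vec\gamma)$ is not guaranteed. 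Your acknowledged ``subtlety'' about arranging a single $\delta$ does not help, because the formula quantifies over \emph{all} such $\delta$; there is no $\delta$ to choose.

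The subsidiary fact---that $\tau$ is $\Sigma_m$-reflecting---is in fact where the real work lives, and your ``same device'' gesture does not establish it. The paper does not try to show that $R$ itself is $\Pi_m$. Instead it introduces the ordinals $\delta_{m,\omega_2}(\alpha)$ (suprema of $\Delta_m^N(\{\alpha\})$ well-orderings of $\omega$), proves via an explicit $\Delta_m$-coding of witnesses that $L_{\delta_{m,\omega_2}(\alpha)}\prec_{\Sigma_m}N$ for every $\alpha$, and deduces $L_\tau\prec_{\Sigma_m}N$. With this $\Sigma_m$-elementarity in hand, it exhibits a concrete $\Pi_m$ property $(*)$ (reflection relative to all set-sized $M\prec_{\Sigma_{m-1}}N$, with the conclusion localized to $L_\tau$) which, \emph{together} with $L_\tau\prec_{\Sigma_{m-1}}N$, implies $\Sigma_m$-reflection; both conjuncts are $\Pi_m$, both hold at $\tau$, and the strict inequality follows by $\Pi_m$-reflection. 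The $\Sigma_m$-elementarity of $L_\tau$ is precisely the ingredient that lets one push the reflected witness back down into a set model---the step your localization cannot do---and it is entirely absent from your outline.
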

\begin{proof}
By Lemma \ref{LemmaSigmaOmega2}, it suffices to work in $L$ and show that if $2\leq n < \omega$, then $\sigma_{\Sigma_n,\omega_2} < \sigma_{\Pi_n,\omega_2}$. Thus we work in $L$. We have noted that $\sigma_{\Sigma_n,\omega_2}$ and $\sigma_{\Pi_n,\omega_2}$ are countable. 

Let $\alpha$ be a countable ordinal and denote by $\delta_{n,\omega_2}(\alpha)$ the supremum of order-types of $\Delta_n^{L_{\omega_2}}(\{\alpha\})$ wellorderings of $\mathbb{N}$. These are the wellorderings $R(x,y)$ of $\mathbb{N}$ for which there exist $\Sigma_n$ formulas $\phi_0,\phi_1$ such that for all $x,y\in\mathbb{N}$, we have
\begin{align}\label{DefRelR}
R(x,y)
&\leftrightarrow L_{\omega_2} \models \phi_0(x,y,\alpha)\\
&\leftrightarrow L_{\omega_2} \not\models \phi_1(x,y,\alpha).\nonumber
\end{align}
Note that the class of ordinals given this way is closed downwards, so that $\delta_{n,\omega_2}(\alpha)$ is also the first ordinal not of this form.

\begin{claim}
If $\alpha < \sigma_{\Sigma_n,\omega_2}$, then $\delta_{n,\omega_2}(\alpha)\leq \sigma_{\Sigma_n,\omega_2}$. Similarly, if $\alpha < \sigma_{\Pi_n,\omega_2}$, then $\delta_{n,\omega_2}(\alpha)\leq \sigma_{\Pi_n,\omega_2}$.
\end{claim}
\proof
This argument follows the proof of Aczel-Richter \cite[Theorem 11.1]{AcRi74}. We include the proof for $\sigma_{\Sigma_n,\omega_2}$; the proof for $\sigma_{\Pi_n,\omega_2}$ is similar.
Let $\delta < \delta_{n,\omega_2}(\alpha)$, say, given by a relation $R$ as in \eqref{DefRelR}.
Let $\theta(\alpha,\sigma)$ be a $\Sigma_n$ formula which over $L_{\omega_2}$  asserts the existence (first-order) of a bijection $f:\mathbb{N}\to\sigma$ such that for all $x,y,\in\mathbb{N}$, we have $f(x) < f(y)$ if and only if $R(x,y)$ holds. Then, clearly $\sigma$ is the least ordinal such that $L_{\omega_2}\models \theta(\alpha,\sigma)$.
\endproof

It follows that each of $\sigma_{\Sigma_n,\omega_2}$ and $\sigma_{\Pi_n,\omega_2}$ is either of the form $\delta_{n,\omega_2}(\alpha)$, or a limit of these ordinals.

We now claim that, letting $\delta:=\delta_{n,\omega_2}(\alpha)$, we have 
$L_\delta \prec_{\Sigma_n} L_{\omega_2}$ for every $\alpha < \omega_1$,
i.e., we have: if $\phi$ is a $\Sigma_n$ formula and $\gamma_1 < \hdots < \gamma_k <\delta$ are such that $L_{\omega_2} \models \phi(\gamma_1, \hdots, \gamma_k)$, then $L_\delta\models \phi(\gamma_1, \hdots, \gamma_k)$. To prove this, we assume for simplicity that there is only one parameter $\gamma$, so that $L_{\omega_2} \models \exists x\, \phi(\gamma,x)$, where $\phi \in \Pi_{n-1}$. We let $R_\gamma$ be a $\Delta_n^{L_{\omega_2}}(\{\alpha\})$ wellordering of $\mathbb{N}$ isomorphic to $\gamma$ and write $R_\gamma^x$ for the result of replacing $\alpha$ in the definition of $R_\gamma$ by $x$. Thus, $R_\gamma^\alpha = R_\gamma$ and $R_\gamma^\beta \subset R_\gamma^\alpha$ whenever $\beta<\alpha$ and $L_\beta\prec_{\Sigma_{n-1}}L_\alpha$.
Define a wellordering $R$ of $\mathbb{N}$ by putting $x <_R y$ if and only if there exist:
\begin{enumerate}
\item an ordinal $\xi < \omega_1$, 
\item a wellordering $Q_\alpha$ of $\mathbb{N}$ isomorphic to $\alpha$,
\item an ordinal $\zeta$ isomorphic to $R^{\textsc{lth}(Q_\alpha)}_\gamma$, where $\textsc{lth}(Q_\alpha)$ denotes the length of $\textsc{lth}(Q_\alpha)$,
\item a wellordering $Q_\xi$ of $\mathbb{N}$ isomorphic to $\xi$,
\end{enumerate}
such that $(Q_\alpha, \zeta, \xi, Q_\xi)$ are lexicographically ${<}_L$-least such that $L_{\omega_2}\models \phi(\zeta, \xi)$, and $x <_{Q_\xi} y$.

Observe that the quadruple $(Q_\alpha, \zeta, \xi, Q_\xi)$ is uniquely determined, $\xi$ is a witness to the fact that $L_{\omega_2}\models \exists x\, \phi(\zeta,x)$ and $R\cong Q_\xi \cong \xi$. However, the definition just given of $R$ is $\Sigma_n^{L_{\omega_2}}(\{\alpha\})$. Moreover, since the quadruple $(Q_\alpha, \zeta, \xi, Q_\xi)$ is uniquely determined we have $x <_R y$ if and only if $x<_{Q_\xi} y$ whenever $(Q_\alpha, \zeta, \xi, Q_\xi)$ is $<_L$-least such that 
\begin{enumerate}
\item $\xi < \omega_1$, 
\item $Q_\alpha$ is a wellordering of $\mathbb{N}$ isomorphic to $\alpha$,
\item $\zeta$ is an ordinal isomorphic to $R^{\textsc{lth}(Q_\alpha)}_\gamma$,
\item $Q_\xi$ is a wellordering of $\mathbb{N}$ isomorphic to $\xi$,
\item $L_{\omega_2}\models \phi(\zeta, \xi)$.
\end{enumerate}
Now, $R$ is also $\Pi_n^{L_{\omega_2}}(\{\alpha\})$ and so $\Delta_n^{L_{\omega_2}}(\{\alpha\})$. Thus, the length of $R$ is smaller than $\delta$. But $R$ is the length of a witness to the fact that $L_{\omega_2} \models \exists x\, \phi(\gamma,x)$, so this proves the claim that $L_{\delta}\prec_{\Sigma_n} L_{\omega_2}$.

We have seen that either $\sigma_{\Pi_n,\omega_2}$ is of the form $\delta_{n,\omega_2}(\alpha)$ and thus $L_{\sigma_{\Pi_n,\omega_2}} \prec_{\Sigma_n} L_{\omega_2}$, or else $\sigma_{\Pi_n,\omega_2}$ is a limit of such ordinals, in which case we also have $L_{\sigma_{\Pi_n,\omega_2}} \prec_{\Sigma_n} L_{\omega_2}$. 

We now claim that $\sigma_{\Pi_n,\omega_2}$ satisfies the reflection property from the definition of $\sigma_{\Sigma_n,\omega_2}$ in the following stronger form  $(*)$: for all $\gamma_1< \hdots<\gamma_k < \sigma_{\Pi_n,\omega_2}$, whenever $\sigma_{\Pi_n,\omega_2}\in M \in  L_{\omega_2}$, $M \prec_{\Sigma_{n-1}} L_{\omega_2}$, and $M\models\exists x\, \phi(\sigma_{\Pi_n,\omega_2},\gamma_1,\hdots,\gamma_k, x)$, with $\phi \in \Pi_{n-1}$, there exists  $\bar\sigma < \sigma_{\Pi_n,\omega_2}$ such that $L_{\sigma_{\Pi_n,\omega_2}} \models \exists x\, \phi(\bar\sigma,\gamma_1,\hdots,\gamma_k, x)$. Clearly,  $(*)$ and the fact that $L_{\sigma_{\Pi_n,\omega_2}} \prec_{\Sigma_{n-1}} L_{\omega_2}$ together imply the defining property of $\sigma_{\Sigma_n,\omega_2}$, so it will follow that $\sigma_{\Sigma_n,\omega_2} \leq \sigma_{\Pi_n,\omega_2}$. However, both $(*)$ and the fact that $L_{\sigma_{\Pi_n,\omega_2}} \prec_{\Sigma_{n-1}} L_{\omega_2}$ are expressible as $\Pi_n$ formulas over $L_{\omega_2}$, so the strict inequality follows from the definition of $\sigma_{\Pi_n,\omega_2}$.

To prove that $\sigma_{\Pi_n,\omega_2}$ satisfies $(*)$, assume for simplicity that $k = 1$ and let $\gamma = \gamma_1$, $M$, and $\phi$ be as above, so $M\models\exists x\, \phi(\sigma_{\Pi_n,\omega_2}, \gamma, x)$. Then, we have 
\[L_{\omega_2} \models \exists \sigma \,\exists M\, \Big(\sigma, \gamma \in M\wedge M \prec_{\Sigma_{n-1}} V \wedge \text{``$M\models\exists x\, \phi(\sigma,\gamma, x)$''}\Big),\]
where the outermost existential quantifier is witnessed by $\sigma_{\Pi_n,\omega_2}$. This is a $\Sigma_n^{L_{\omega_2}}(\{\gamma\})$ formula, so since $L_{\sigma_{\Pi_n,\omega_2}} \prec_{\Sigma_n} L_{\omega_2}$ it follows that 
\[L_{\sigma_{\Pi_n,\omega_2}} \models \exists \bar\sigma \,\exists M\, \Big(\bar\sigma, \gamma \in M\wedge M \prec_{\Sigma_{n-1}} V \wedge \text{``$M\models\exists x\, \phi(\bar\sigma,\gamma, x)$''}\Big).\]
Letting $\bar\sigma$ and $M$ be as above, we have $M \prec_{\Sigma_{n-1}} L_{\sigma_{\Pi_n,\omega_2}}$ and thus 
\[L_{\sigma_{\Pi_n,\omega_2}}\models \exists x\, \phi(\bar\sigma,\gamma, x),\]
as desired. This proves the lemma. 
\end{proof}

We now conclude this section with:
\begin{proof}[Proof of Theorem \ref{collapse}]
The inequality $\sigma^1_m <\pi^1_m$ holds in $L$ by Cutland's theorem (cf. \S \ref{SectIntro}). The inequality $(\pi^1_m)^L < (\sigma^1_n)^{L[G]}$ holds by Lemma \ref{LemmaSigmaLG}, and $(\sigma^1_n)^{L[G]} < (\pi^1_n)^{L[G]}$ by Lemma \ref{LemmaSigmaPiInLG}. According to Lemma \ref{LemmaReflectionCountable}, we have  $(\pi^1_n)^{L[G]} < \omega_1^L$, which is clearly smaller than $\omega_2^L = (\omega_1)^{L[G]}$.
\end{proof}

\section{Concluding remarks}
We have carried out the first investigations into the structure of reflecting ordinals in non-canonical models of set theory. However, several questions remain open. We mention two. The first one is a strengthening of Corollary \ref{CorollaryOmega2L}:
\begin{question}
Suppose $\omega_1^L < \omega_1$. Must we have $(\sigma^1_3)^L < \sigma^1_3$ and $(\pi^1_3)^L < \pi^1_3$?
\end{question}

The second one concerns the converse, in  a kind of reversal to the results of \S \ref{SectCollapse}. 
\begin{question}
Suppose $(\sigma^1_3)^L < \sigma^1_3$. Must we have $\omega_1^L < \omega_1$?
\end{question}

We also conjecture that the conclusions of Theorem \ref{sacks} and Theorem \ref{lavermiller} remain true for iterations of the corresponding forcing notions:

\begin{conjecture}
The values of $\sigma^1_n$ and $\pi^1_n$ remain unchanged after forcing over $L$ with countable-support iterations of Sacks forcing. The same holds for Miller and Laver forcing.
\end{conjecture}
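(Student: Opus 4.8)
The plan is to combine the machinery developed for a single Sacks real (Lemmas \ref{groundmodelequivalence}, \ref{arithforcingsacks} and \ref{forcingissamecomplexity}) with the reduction technique used in the Cohen case (Lemmas \ref{projectivetruthisstable} and \ref{LOlemma}). Write $\P = \P_{\omega_2}$ for the full countable-support iteration of $\mathbb S$ of length $\omega_2^L$ (the case of other lengths being analogous). As in the proof of Theorem \ref{sacks}, the goal is to show that if $\alpha$ is $\Gamma^1_n$-reflecting in $L$ but fails to be so after $\P$, then the statement ``$\forces_\P L_\alpha \models \varphi(\beta)$, while no smaller $L_\gamma$ models $\varphi(\beta)$'' can be expressed as a $\Gamma^1_n$ statement about $L_\alpha$, contradicting reflection; applying the reflected statement back in the extension then produces a $\gamma \in (\beta,\alpha)$ with $\forces_\P L_\gamma \models \varphi(\beta)$, violating minimality. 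Both pillars of the single-step argument must be upgraded: an iteration analog of the homogeneity Lemma \ref{groundmodelequivalence}, guaranteeing that a failure of reflection is forced by all of $\P$, and an iteration analog of Lemma \ref{forcingissamecomplexity}, computing the complexity of the forcing relation. The former should follow by induction on the length from the minimality/homogeneity of each single step.

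First I would develop a projective coding of countable-length countable-support Sacks iterations and of nice names for reals in them. Here ``countable length'' means length ${<}\,\omega_1$; the crucial point, exactly as in the single-step case where $\mathbb S$ itself has size continuum but the nice names forming $S$ are an arithmetic set, is that one need not code the (size-$\aleph_1$) forcing itself, only the names for reals. Using the iterated fusion machinery for $\mathbb S$ (continuous reading of names), each name for a real should be captured by a countable skeleton coded by a real, with ``$p \forces \psi(\dot X)$'' being $\Delta^1_1$ in the codes for arithmetic $\psi$, in analogy with Lemma \ref{arithforcingsacks}. One then needs a projective ``complete embedding'' relation between such countable iterations, playing the role that $A \le B$ for linear orders plays in Lemma \ref{LOlemma}, together with an analog of Lemma \ref{LOlemma} itself: the alternation $\exists X\,\forall Y\,\exists Z\,\dots$ in $\varphi$ translates into alternating quantifiers over countable subiterations (ordered by that embedding relation) and their name-codes, keeping the total complexity at $\Gamma^1_n$.

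The main obstacle --- and the reason the result is stated as a conjecture --- is the reduction from the full length-$\omega_2$ iteration to these countable pieces. In the Cohen case the reduction in Lemma \ref{projectivetruthisstable} rests on the product decomposition $\mathbb C_X = \mathbb C_Y \times \mathbb C_{X\setminus Y}$, which lets one peel off a countable piece carrying a given name while leaving an independent uncountable remainder to which the induction applies. A countable-support iteration only factors as $\P = \P_{\delta_0} * \dot\P_{[\delta_0,\omega_2)}$, where the tail is a name depending on $G_{\delta_0}$; worse, a name for a real in the iteration need not be supported on any countable-length initial segment, since the maximal antichains deciding its values may have supports cofinal in $\omega_2$. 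Thus the clean inductive factoring of the Cohen proof is simply unavailable.

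To circumvent this I would replace the product reduction by a reflection argument in the spirit of Lemma \ref{LemmaGenericPi12} and the collapse section: pass to a countable elementary submodel $M \prec H(\theta)^L$ containing $\alpha,\beta,\varphi$ and (a code for) $\P$, and let $\bar\P$ be the image of $\P$ under the transitive collapse of $M$, which is a countable-length countable-support Sacks iteration. By elementarity what $\P$ forces about $\alpha$ and $\beta$ is mirrored by what $\bar\P$ forces in the collapse $\bar M$, and one would render ``$\forces_\P L_\alpha \models \varphi(\beta)$'' projectively by quantifying over all such countable transitive collapses and their iterations, exactly as genericity was made $\Pi^1_2$ in Lemma \ref{LemmaGenericPi12}. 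The delicate and as-yet-unresolved point is the complexity bookkeeping: one must verify that the ``countable CS Sacks iteration'' predicate and the complete-embedding relation are projective of sufficiently low complexity, and that quantifying over the collapses adds no complexity (as it did not in Lemma \ref{LemmaGenericPi12}), so that the resulting statement genuinely lands in $\Gamma^1_n$. It is precisely this interaction between the transfinite, non-product structure of the iteration and the bounded-complexity demands of the reflection argument --- flagged in the remark following Theorem \ref{lavermiller} --- that I expect to be the hard part, and the reason a full proof remains elusive.
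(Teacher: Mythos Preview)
The statement you are addressing is a \emph{conjecture}, not a theorem: the paper offers no proof whatsoever, only the remark preceding Theorem~\ref{lavermiller}'s successor paragraph explaining why the single-step argument does not obviously extend. There is therefore nothing to compare your attempt against.

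Your proposal is, appropriately, not a proof either but an outline of a strategy together with a candid account of where it stalls. The obstacles you isolate are precisely those the paper flags: the iteration is defined inductively along a well-order and does not decompose as a product, so the Cohen-style reduction $\mathbb{C}_X \cong \mathbb{C}_Y \times \mathbb{C}_{X\setminus Y}$ has no analogue, and there is no evident way to render ``$\forces_{\P_{\omega_2}} L_\alpha \models \varphi(\beta)$'' as a $\Gamma^1_n$ statement about $L_\alpha$. Your suggestion to replace the product reduction by reflection to countable elementary submodels (and their collapsed iterations) is a natural idea that the paper does not mention, but---as you yourself note---the complexity bookkeeping for ``is a countable CS Sacks iteration,'' for the embedding relation between such iterations, and for the quantification over collapses is left entirely unresolved, and that is exactly the hard part. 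In short, your write-up is a thoughtful elaboration of why the problem is open, not a solution; this is consistent with the paper, which leaves the statement as a conjecture.
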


\subsection{Acknowledgements} 
This work was partially supported by FWF grants I4513 and ESP-3.

\bibliographystyle{abbrv}
\bibliography{References.bib} 
\end{document}